\newcommand{\N}{\mathbb{N}}                     
\newcommand{\R}{\mathbb{R}}                     
\newcommand{\C}{\mathbb{C}}                     
\newcommand{\T}{\mathbb{T}}                     
\newcommand{\set}[2]{\left\{{#1}\mid{#2}\right\}}       
\newcommand{\Span}{\mathrm{span\,}}             
\newcommand{\supp}{\mathrm{supp\,}}             
\newcommand{\dom}{\mathrm{dom}\,}               
\newtheorem*{mainthm}{\sc Theorem}           
\newtheorem{thm}{\sc Theorem}[section]       
\newtheorem{cor}[thm]{\sc Corollary}        
\newtheorem{lem}[thm]{\sc Lemma}            
\newtheorem{prop}[thm]{\sc Proposition}     
\newtheorem{defn}[thm]{\sc Definition}      
\newtheorem{rem}[thm]{\sc Remark}           
\title{Global stable manifolds in holomorphic dynamics under bunching conditions}
\author{Alberto Abbondandolo and Pietro Majer}
\date{November 22, 2011}
\begin{document}

\renewcommand{\theenumi}{\roman{enumi}}
\renewcommand{\labelenumi}{(\theenumi)}

\maketitle

\begin{abstract}
We prove that the stable manifold of every point in a compact hyperbolic  invariant set of a holomorphic automorphism of a complex manifold is biholomorphic to $\C^d$, provided that a bunching condition, which is weaker than the classical bunching condition for linearizability, holds.
\end{abstract} 

\section*{Introduction}

In this paper we study a foundational question in holomorphic dynamics: are stable manifolds biholomorphic to $\C^d$?
More precisely, let $f:X\rightarrow X$ be a holomorphic automorphism of a complex manifold and let $K\subset X$ be a compact $f$-invariant set. We assume that $K$ is hyperbolic, meaning that there is a $Df$-invariant splitting of the tangent bundle of $X$ restricted to $K$
\[
TX|_K = E^s \oplus E^u
\]
such that for every $x\in K$
\[
\bigl\| Df^n(x)|_{E^s} \bigr\| \leq C\, \Lambda_s^n, \qquad 
\bigl\| Df^{-n}(x)|_{E^u} \bigr\| \leq C\, \Lambda_u^n, \qquad \forall n\in \N,
\]
for some numbers $0<\Lambda_s<1$, $0<\Lambda_u <1$, $C>0$. Here the operator norm is induced by some Riemannian metric on $M$, but the notion of hyperbolicity and the numbers $\Lambda_s$, $\Lambda_u$ do not depend on the choice of this metric (see e.g. \cite{shu87}, \cite{kh95}, or \cite{yoc95b} for general facts about hyperbolic dynamics). Then each point $x\in K$ has a stable manifold
\[
W^s(x) := \bigl\{z\in X \, \big|\, \mathrm{dist}\,(f^n(z),f^n(x)) \rightarrow 0 \mbox{ for } n\rightarrow \infty\bigr\}
\]
which is the image of an injective holomorphic immersion $W \hookrightarrow X$
of a complex manifold $W$ which is $C^{\infty}$-diffeomorphic to $\R^{2d}$, where $d$ is the complex dimension of the stable distribution $E^s$. The question is whether $W$ is also biholomorphic to $\C^d$. When this holds, with a slight abuse of terminology we shall say that $W^s(x)$ is biholomorphic to $\C^d$.

This question has a positive answer when $K$ is a hyperbolic fixed point - or more generally a periodic orbit - as shown by J.\ P.\ Rosay and W.\ Rudin in \cite{rr88}. The question for a general compact hyperbolic set $K$ was posed by E.\ Bedford in \cite{bed00}. In such a case, M.\ Jonsson and D.\ Varolin have shown in \cite{jv02} that the answer is also positive for almost every point $x\in K$, with respect to any $f$-invariant probability measure on $K$. More generally, they have proved an analogous result in the setting of partial hyperbolicity. Extensions of these positive results to sequences of holomorphic contractions
\[
f_n : B \rightarrow B, \qquad B=\set{z\in \C^d}{|z|<1}, \qquad f_n(0)=0,
\]
under weak monotonicity assumptions on the diagonal entries of $Df_n(0)$ - once $Df_n(0)$ is put into upper triangular form by applying a suitable unitary non-autonomous conjugacy - are proved in \cite{pet05}, \cite{pet07}, and \cite{aam11}. In particular, the main result of \cite{aam11} implies that the answer to Bedford's question is positive for every $x\in K$ which has well-defined Lyapunov exponents. See also \cite{wei97,for99,for04,fs04,wol05,pw05,pvw08,bdm08,aro11} for related results.

On the other hand, it has been known for a long time and it is easy to show that $W^s(x)$ is biholomorphic to $\C^d$ for every $x\in K$ when the hyperbolic set satisfies a suitable bunching condition: If the numbers $0<\Lambda_s < 1 < M_s$ are such that
\begin{equation}
\label{bunching}
\bigl\| Df^n(x)|_{E^s} \bigr\| \leq C\, \Lambda_s^n, \qquad 
\bigl\| ( Df^n(x)|_{E^s} )^{-1} \bigr\| \leq C \, M_s^n, \qquad \forall n\in \N,
\end{equation}
for some $C>0$, it is required that
\[
\Lambda_s^2 M_s < 1,
\]
or equivalently that
\[
\beta:= \frac{\log M_s}{-\log \Lambda_s} < 2.
\]
The number $\beta$ is sometimes called the bunching parameter. The hypothesis that the bunching parameter is less than 2 implies that the restriction of $f$ to the stable manifold of $K$ is locally holomorphically linearizable, and this fact immediately implies that the stable manifold of each $x\in K$ is biholomorphic to $\C^d$ (see \cite[Theorem 3]{jv02}). Similar bunching conditions are relevant in other questions of hyperbolic dynamics, such as the issue of regularity of the stable and unstable foliations, normal forms, rigidity results, ergodicity (see e.g.\ \cite[Chapter 6]{kh95}, \cite[Chapter 3]{hk02}, \cite{rru07}, \cite[Chapter 5.2.2]{kn11}, \cite{bw10}). 

It is also well-known that the condition $\beta<2$ is sharp for the linearization question: One can construct examples with $\beta=2$ where linearization is not possible (see e.g.\ \cite[Exercise 6.6.1]{kh95}). The aim of this paper is to show that the bunching value 2 is instead not critical for Bedford's question. More precisely, we shall prove the following:

\begin{mainthm}
Let $f:X\rightarrow X$ be a holomorphic automorphism of a complex manifold and let $K\subset X$ be a compact hyperbolic invariant set with stable distribution $E_s$. Assume that the condition (\ref{bunching}) holds and that
\[
\beta:= \frac{\log M_s}{-\log \Lambda_s} < 2 + \epsilon(d),
\] 
where $\epsilon(d)$ is a positive number depending on the complex dimension $d$ of the stable distribution $E^s$. Then the stable manifold $W^s(x)$ of every $x\in K$ is biholomorphic to $\C^d$.
\end{mainthm}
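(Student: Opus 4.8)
The plan is to reduce the theorem to an abstract non-autonomous model and then combine a convergent polynomial normal form with a direct construction of the biholomorphism. \emph{Reduction.} Using the holomorphic stable manifold theorem and the compactness of $K$, I would pass to the orbit $x_n:=f^n(x)$, choose holomorphic charts $\psi_n\colon W^s_{\mathrm{loc}}(x_n)\to B_n\subset\C^d$ onto balls of radius bounded below with $\psi_n(x_n)=0$, and set $h_n:=\psi_{n+1}\circ f\circ\psi_n^{-1}$, so that each $h_n$ is an injective local biholomorphism with $h_n(0)=0$ and $A_n:=Dh_n(0)$ conjugate to $Df(x_n)|_{E^s}$. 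Then (the abstract manifold) $W^s(x)$ is the direct limit of the system $B_0\xrightarrow{h_0}B_1\xrightarrow{h_1}\cdots$, and (\ref{bunching}) becomes the uniform cocycle bounds $\|A_{n+k-1}\cdots A_n\|\le C'\Lambda_s^{\,k}$ and $\|(A_{n+k-1}\cdots A_n)^{-1}\|\le C'M_s^{\,k}$. It thus suffices to prove that this direct limit is biholomorphic to $\C^d$. When $\beta<2$ there are no resonances, the $h_n$ are linearizable with uniformly bounded conjugacies, and the conclusion follows from \cite[Theorem 3]{jv02}; so I only need to treat $2\le\beta<2+\epsilon(d)$.

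\emph{Normal form.} The obstruction to eliminating the degree-$m$ homogeneous part of the $h_n$ is the spectrum near $1$ of the homological cocycle on $\Hom(\Sym^m E^s,E^s)$, whose exponents, by the bunching, lie in $[(\beta-m)\log\Lambda_s,\,(1-m\beta)\log\Lambda_s]$. For every $m\ge3$, since $\beta<3$, this interval is bounded away from $0$ by $(3-\beta)(-\log\Lambda_s)>0$, so all terms of order $\ge3$ can be removed; for $m=2$ and $\beta\ge2$ the interval contains $0$, the quadratic homological cocycle splits over the orbit closure of $x$ into stable, central and unstable parts, and the stable and unstable components are removed. A contraction lies in the Poincaré domain, so only finitely many orders are ever obstructed; the resulting polynomial conjugacies converge, on balls of a uniform radius, to biholomorphisms $\phi_n=\mathrm{Id}+\cdots$ with uniformly bounded inverses. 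After this change of variables $h_n$ becomes $P_n:=A_n+Q_n$, with $Q_n$ a homogeneous quadratic map lying in the central part of the quadratic cocycle — at $\beta=2$ exactly the genuinely resonant part, and for $\beta=2+\epsilon$ a part whose cocycle exponents spread by $O(\epsilon)\,(-\log\Lambda_s)$ around $0$.

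\emph{Globalization.} It remains to show that the direct limit of $B_0\xrightarrow{P_0}B_1\xrightarrow{P_1}\cdots$ is biholomorphic to $\C^d$. When $Q_n\equiv0$ the compatible maps $\Theta_N:=(A_{N-1}\cdots A_0)^{-1}$ work, their images exhausting $\C^d$ because $\|A_{N-1}\cdots A_0\|\le C'\Lambda_s^{\,N}\to0$; for $Q_n\not\equiv0$ the $P_n$ are genuinely quadratic self-maps of $\C^d$, not automorphisms, so one cannot simply invert the linear cocycle. Instead I would build $\Theta=\lim_N\Theta_N$, where $\Theta_N$ is obtained from $(A_{N-1}\cdots A_0)^{-1}$ by a polynomial correction generated recursively from $Q_0,\dots,Q_{N-1}$, arranged so that the compatibility defect $\Theta_{N+1}\circ P_N-\Theta_N$ is driven solely by the $Q_n$-terms carried along the inverse cocycle, and then verify that the corrected images $\Theta_N(B_N)$ still exhaust $\C^d$. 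The naive correction series has general term of size $\sim\Lambda_s^{(2-\beta)k}$, summable precisely for $\beta<2$; to cross this threshold one uses that $Q_n$ is central — its output lies in the faster-contracting directions of $E^s$ and the monomials occurring are rate-balanced — and re-runs the estimates with direction-adapted cocycle bounds, so that the scheme still converges and still exhausts $\C^d$ as long as $\beta-2$ stays below a threshold $\epsilon(d)$ governed by the combinatorics of degree-$2$ monomials in $d$ variables. The hard part is exactly this last step: once $\beta\ge2$ one genuinely carries quadratic terms whose cocycle exponents may fail to be negative, so the exhaustion underlying ``$\,W^s(x)\cong\C^d\,$'' is no longer automatic; the correction scheme is borderline at $\beta=2$, and extracting the room that makes it converge is what both forces and quantifies the bound $\beta<2+\epsilon(d)$.
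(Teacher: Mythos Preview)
Your reduction step is essentially the paper's, and your observation that for $\beta<3$ one only needs to control the 2-jet conjugacy (order $\geq 3$ terms being removable with bounded conjugacies, the content of the paper's Theorem \ref{volata}) is also correct. The genuine gap is in the remaining two steps.

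\emph{Normal form.} You propose to split the quadratic homological cocycle ``over the orbit closure of $x$ into stable, central and unstable parts'' and keep only the central part $Q_n$. But this is a \emph{cocycle} over a non-ergodic, non-measure-preserving base; there is in general no continuous invariant splitting of this kind, and Oseledets theory gives you only a measurable one almost everywhere --- which is exactly the Jonsson--Varolin result you are trying to improve. The paper circumvents this by first putting $Df_n(0)$ into upper triangular form via the QR decomposition (a pointwise, bounded operation), which imposes a \emph{combinatorial} partial order on the standard basis of $\mathscr{H}_2$ and makes the conjugacy operator upper triangular with respect to it. The ``resonant'' part is then not a spectrally defined subbundle but the span of an explicit index set $\mathbb{V}\subset\mathbb{H}_2$.

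\emph{Globalization.} Your maps $P_n=A_n+Q_n$ are, as you note, not automorphisms of $\C^d$, so there is no basin in $\C^d$ to identify with the direct limit, and the exhaustion argument via $\Theta_N$ has no target. The paper's entire strategy is to conjugate to a sequence of genuine \emph{automorphisms} --- special triangular ones, each conjugated by a permutation matrix $U_{\sigma_h}$ that is allowed to change on longer and longer blocks --- whose basin is provably $\C^d$ (Theorem \ref{treni}). The mechanism you describe as ``re-run the estimates with direction-adapted cocycle bounds'' is precisely what is missing: in the paper this is the combinatorial nilpotency Lemma \ref{combi}, which exhibits a finite sequence of permutations $\sigma_1,\dots,\sigma_{\binom{d}{2}}$ such that any composition of $\binom{d}{2}+1$ operators supported on the ``bad'' relation $\mathbb{W}$, interleaved with these permutations, vanishes identically. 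This nilpotency is what caps the accumulation of the expanding factors $(\Lambda^2M)^n>1$ in the conjugacy series and produces the quantitative $\epsilon(d)$. Without an analogue of this lemma, your correction scheme has no reason to converge once $\beta\geq 2$, and the proposal stops exactly at the point where the paper's new idea enters.
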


In particular, stable manifolds of points on 2-bunched compact hyperbolic invariant sets are biholomorphic to a complex vector space in every dimension.
The explicit expression for the number $\epsilon(d)$ is given below in Theorem \ref{principale}. The sequence $\epsilon(d)$ converges rapidly to zero for $d\rightarrow \infty$. 

We conclude this introduction by informally discussing the general strategy behind the previous results about the complex structure of $W^s(x)$ and the new ideas which are involved in the proof of the above theorem. 

Let $x\in K$. By looking at the restriction of $f$ to the local stable manifold of each point $f^n(x)$, $n\in \N$, which is biholomorphic to the unit ball $B$ of $\C^d$, one obtains a sequence of holomorphic contractive embeddings
\[
f_n : B \hookrightarrow B,
\]
which fix $0$. The stable manifold $W^s(x)$ is bihomolorphic to the ``abstract basin of attraction'' $W$ of $0$ with respect to the sequence $(f_n)$, that is the inductive limit given by the sequence of embeddings $(f_n)$ (see \cite{fs04}, or Section \ref{abasec} below). Therefore, it is enough to show that the complex manifold $W$ is biholomorphic to $\C^d$. In order to study the complex structure of such a manifold, a natural idea is to find a suitable non-autonomous conjugacy, that is a commuting diagram of the form
\[
\begin{CD}
B @>{f_n}>> B \\ @V{h_n}VV @VV{h_{n+1}}V \\ \C^d @>{g_n}>> \C^d, 
\end{CD}
\]
where $g_n$ is a sequence of automorphisms of $\C^d$ having $0$ as an attracting fixed point.
Indeed, under suitable boundedness assumptions on $(h_n)$, the above diagram induces a biholomorphisms from the abstract basin of attraction of $0$ with respect to $(f_n)$ to the one determined by $(g_n)$, which can be identified with the domain
\begin{equation}
\label{bas}
\set{z\in \C^d}{g_n \circ  g_{n-1} \circ \dots \circ g_0 (z) \rightarrow 0 \mbox{ for } n\rightarrow \infty}.
\end{equation}
Therefore, one needs to find a sequence of automorphisms $(g_n)$ for which the basin of attraction (\ref{bas}) is the whole $\C^d$ and for which the conjugacy equation
\begin{equation}
\label{eqconj}
h_{n+1} \circ f_n = g_n \circ h_n
\end{equation}
admits a suitably bounded solution $(h_n)$. 

A useful class of automorphisms which have the whole $\C^d$ as basin of attraction of $0$ is the class of ``special triangular automorphisms'', that is polynomial maps
\[
g: \C^d \rightarrow \C^d, \qquad z\mapsto z',
\]
of the form
\begin{eqnarray*}
&z_1' &= \lambda_1 z_1 + p_1(z_2,\dots,z_d) \\ 
&z_2' &= \lambda_2 z_2 + p_2(z_3,\dots,z_d) \\
&\vdots  & \qquad \vdots  \qquad \qquad \vdots \\
&z_{d-1}' &= \lambda_{d-1} z_{d-1} + p_{d-1}(z_d) \\
&z_d' &= \lambda_d z_d,
\end{eqnarray*}
where $|\lambda_j|\leq \Lambda <1$ and $p_j\in \C[z_{j+1},\dots,z_d]$ vanishes at $0$. If $(g_n)$ is a sequence of special triangular automorphisms, such that the $p_j$'s have uniformly bounded degree and coefficients and such that the upper bound $\Lambda<1$ for the absolute value of the diagonal entries $|\lambda_j|$ is also uniform, then the set (\ref{bas}) is the whole $\C^d$. 

All the proofs of the previous results (\cite{rr88}, \cite{jv02}, \cite{pet07}, \cite{aam11}) are based on finding a suitably bounded non-autonomous conjugacy with a sequence of special triangular automorphisms. The nonlinear conjugacy equation (\ref{eqconj}) becomes linear if one considers it separately on each $k$-homogeneous part of $(h_n)$, for $k=1,2,\dots$. Actually, one has to consider only finitely many of these linear equations, because of the following general fact: If the sequence $(f_n)$ satisfies
\begin{equation}
\label{ssttmm}
\|Df_n(0)\| \leq \Lambda, \qquad  \qquad \|Df_n(0)^{-1}\| \leq M,
\end{equation}
for some $0<\Lambda<1<M$ and is boundedly conjugated at the level of $k$-jets to the sequence $(g_n)$, with $k+1$ larger than the bunching parameter $\beta=-\log M/\log \Lambda$, or equivalently $\Lambda^{k+1}M<1$, then $(f_n)$ and $(g_n)$ are actually locally conjugated (see Theorem \ref{volata} below for a precise statement of this well-known fact). 

At the level of $1$-jets, the conjugacy equation (\ref{eqconj}) takes the form
\[
Dh_{n+1}(0) \circ Df_n(0) = Dg_n(0) \circ Dh_n(0).
\]
The fact that every matrix $L$ can be decomposed as $L=QR$ with $Q$ unitary and $R$ upper triangular implies that the equation
\[
H_{n+1} \circ Df_n(0) = G_n \circ H_n
\]
has a solution $(H_n)$, $(G_n)$ with $H_n$ unitary and $G_n$ upper triangular (once we fix $H_0$, the solution is unique up to multiplication by diagonal unitary matrices). This means that at the level of $1$-jets it is always possible to conjugate $(f_n)$ to a sequence of special triangular (linear) automorphisms $(g_n)$. When $\beta<2$, by the general fact stated above, this conjugacy of $1$-jets extends to a local conjugacy (that is, the sequence $(f_n)$ is linearizable), and the abstract basin of attraction of 0 with respect to $(f_n)$ is biholomorphic to $\C^d$.

When $\beta\geq 2$, one has to consider also the conjugacy equation for $k$-jets for $k\geq 2$. Under suitable weak monotonicity assumptions on the diagonal entries of the upper triangular matrix $G_n = Dg_n(0)$, one can actually find a bounded solution $(h_n)$, $(g_n)$ for such an equation, with $(g_n)$ a sequence of special triangular automorphisms of degree $k$. If $k$ is large enough, one deduces the existence of a local conjugacy (see \cite{pet07} and \cite{aam11}). In particular, the required monotonicity assumptions hold when the diagonal entries of $G_n$ have a good limiting behavior, and by Oseledec's theory one finds the aforementioned almost everywhere statement of \cite{jv02}.

However, in general if $\beta\geq 2$ the conjugacy equation at the level of $2$-jets may have no bounded solution $(h_n)$, $(g_n)$, with $(g_n)$ a sequence of special triangular automorphisms of degree 2. An example with $\beta=2$ is given in \cite[Section 4]{aam11}. Therefore, one needs to enlarge the possible normal forms for $(f_n)$ to a  larger class of sequences $(g_n)$ of automorphisms of $\C^d$ having the whole $\C^d$ as basin of attraction of $0$. 

The starting idea of this paper is to use sequences of automorphisms of the form
\begin{equation}
\label{forma}
g_n = U_{\sigma_h} \circ \tilde{g}_n \circ U_{\sigma_h}^{-1}, \qquad \mbox{for} \quad m_h \leq n < m_{h+1},
\end{equation}
where $(\tilde{g}_n)$ is a sequence of special triangular automorphisms, $(\sigma_h)\subset \mathfrak{S}_d$ is a sequence of permutations of the set $\{1,\dots,d\}$, $U_{\sigma}$ is the unitary matrix which maps $e_i$ into $e_{\sigma(i)}$ for every $i\in \{1,\dots,d\}$, and $(m_h)$ is a strictly increasing sequence of natural numbers which grows fast enough. In other words, $g_n$ is special triangular with respect to some permutation of the standard basis of $\C^d$, such a permutation being constant over larger and larger intervals of indices $n$. Indeed, as we prove in Theorem \ref{treni}, if
\begin{equation}
\label{growth}
\lim_{h\rightarrow \infty} (m_{h+1} - m_h) = +\infty  \qquad \mbox{and} \qquad \sum_{h=0}^{\infty} K^{-h} m_h = +\infty,
\end{equation}
where $K$ is the ``stable degree'' of $(\tilde{g}_n)$, then the basin of attraction of $0$ with respect to $(g_n)$ is the whole $\C^d$. In the proof of our main result we construct a conjugacy with a sequence $(g_n)$ in the above class. In order to explain why such an enlargement of the class of normal forms $(g_n)$ allows to weaken the standard bunching assumption, let us consider the particular case in which 
\[
Df_n(0) = \mathrm{Diag} \, \bigl( \lambda_n(1), \lambda_n(2), \dots, \lambda_n(d)   
\bigr) 
\]
is diagonal for every $n\in \N$. If we set
\begin{eqnarray*}
f_n(z) & = & \sum_{1\leq k \leq d} \lambda_n(k) z_k e_k + \sum_{\substack{1\leq i \leq j \leq d \\ 1\leq k \leq d}} f_n(i,j;k) z_i z_j e_k + O(|z|^3), \\
g_n(z) & = & \sum_{1\leq k \leq d} \lambda_n(k) z_k e_k + \sum_{\substack{1\leq i \leq j \leq d \\ 1\leq k \leq d}} g_n(i,j;k) z_i z_j e_k + O(|z|^3), \\
h_n(z) & = & \sum_{1\leq k \leq d}  z_k e_k + \sum_{\substack{1\leq i \leq j \leq d \\ 1\leq k \leq d}} h_n(i,j;k) z_i z_j e_k + O(|z|^3),
\end{eqnarray*}
the conjugacy equation for 1-jets is fulfilled and the one for 2-jets can be written as the system of $d^2(d+1)/2$ uncoupled equations
\begin{equation}
\label{2jets}
h_{n+1}(i,j;k) = \frac{\lambda_n(k)}{\lambda_n(i) \lambda_n(j)} h_n (i,j;k) + \frac{1} 
{\lambda_n(i) \lambda_n(j)} \bigl( g_n(i,j;k) - f_n(i,j;k) \bigr),
\end{equation}
for every $(i,j;k)$ in the set
\[
\mathbb{H} := \set{ (i,j;k)\in \N^3 }{1\leq i \leq j \leq d, \; 1\leq k \leq d},
\]
which indexes the standard basis $\{z_iz_j e_k\}$ of the space $\mathscr{H}$ of 2-homogeneous polynomial endomorphisms of $\C^d$.
By (\ref{ssttmm}), we have the bounds
\[
0<M^{-1} \leq |\lambda_n(i)| \leq \Lambda<1, \qquad \forall i\in \{1,\dots,d\}, \; \forall n\in \N.
\]
When $k$ coincides with either $i$ or $j$, the above bounds imply that
\[
\left|\frac{\lambda_n(k)}{\lambda_n(i)\lambda_n(j)} \right| \geq \frac{1}{\Lambda} > 1.
\]
Therefore, $h_{n+1}(i,j;k)$ is obtained from $h_n(i,j;k)$ by applying a uniformly expanding affine function. This fact easily implies that for such a triplet $(i,j;k)$ we may set $g_n(i,j;k)=0$ for every $n\in \N$ and find a (unique) bounded solution $(h_n(i,j;k))_{n\in \N}$ of (\ref{2jets}). 

On the other hand, if $k$ is different from $i$ and from $j$, we just have the estimate
\[
\left|\frac{\lambda_n(k)}{\lambda_n(i)\lambda_n(j)} \right| \geq \frac{1}{\Lambda^2 M},
\]
which produces a lower bound larger than 1 only if the bunching condition for linearization - $\beta<2$ - holds. It is possible to construct examples with $\beta=2$ such that the equations (\ref{2jets}) for these triplets $(i,j;k)$ do not have any bounded solution $(h_n(i,j;k))_{n\in \N}$ with $g_n(i,j;k)=0$ for every $n\in \N$ (see \cite[Section 4]{aam11}). 
This means that the subset
\[
\mathbb{V} := \set{(i,j;k)\in \mathbb{H}}{k\neq i \mbox{ and } k\neq j} 
\]
should be considered as the set of resonances for the non-autonomous conjugacy equation. The subspace of $\mathscr{H}$ consisting of those polynomial maps which can appear in a special triangular automorphism is generated by the elements of the standard basis $\{z_iz_je_k\}$ which are indexed by the subset
\[
\T := \set{(i,j;k)\in \mathbb{H}}{k< i}.
\]
Since $\T$ is a proper subset of $\mathbb{V}$, it is in general not possible to find a bounded conjugacy with a sequence of special triangular automorphism, even at the level of 2-jets (see again \cite[Section 4]{aam11}). However, the image of $\T$ by the action of the permutation group $\mathfrak{S}_d$ on $\mathbb{H}$, which is given by
\[
\sigma \cdot (i,j;k) = \bigl(\sigma^{-1}(i), \sigma^{-1}(j) ; \sigma(k) \bigr), \qquad \forall \sigma\in \mathfrak{S}_d, \; \forall (i,j;k)\in \mathbb{H}, 
\]
is the whole set of resonances $\mathbb{V}$. Such an action on $\mathbb{H}$ corresponds to the action of $\mathfrak{S}_d$ on $\mathscr{H}$ given by conjugacy by the unitary matrix $U_{\sigma}$,
\[
\sigma \cdot p = U_{\sigma} \circ p \circ U_{\sigma}^{-1}, \qquad \forall \sigma \in \mathfrak{S}_d, \; \forall p \in  \mathscr{H}.
\]
This fact suggests that it should be easier to find a bounded conjugacy to a sequence of polynomial maps $(g_n)$ of the form (\ref{forma}) and of degree 2, where $(\sigma_h)$ varies periodically among a set of permutations in $\mathfrak{S}_d$ whose action applied to $\T$ produces the whole $\mathbb{V}$. Since we also want the basin of attraction of $0$ with respect to $(g_n)$ to be the whole $\C^d$, we need that the sequence $(m_h)$ satisfies the growth condition (\ref{growth}), where $K=2^{d-1}$ is the ``stable degree'' of $(\tilde{g}_n)$.

In the general case, in which $Df_n(0)$ cannot be assumed to be diagonal, the conjugacy equation for 2-jets does not split into uncoupled scalar equations and
a delicate point is the choice of the sequence $(\sigma_h)\subset \mathfrak{S}_d$.
We now explain the role of such a choice, by making some rather imprecise and slightly incorrect statements, which are made precise and correct in the following sections. 

  If we call $u_j \in \mathscr{H}$ the homogeneous part of degree 2 of $h_{m_j}$, the conjugacy equation (\ref{eqconj}) to a sequence $(g_n)$ of the form (\ref{forma}), at the level of 2-jets, takes the form
\begin{equation}
\label{eqxu}
u_{h+1} = \mathscr{A}_h (\sigma_h \cdot u_h) - v_h, \qquad \forall h\in \N,
\end{equation}
where $(v_h)\subset \mathscr{H}$ is defined recursively and can be thought to be given, while $\mathscr{A}_h$ is the conjugacy operator by a product of $m_{h+1}-m_h$ upper triangular matrices. A natural choice for a solution $(u_h)$ of (\ref{eqxu}) is given by the expression
\begin{equation}
\label{serie}
u_h = \sum_{k=0}^{\infty} \tilde{\mathscr{A}}_h^{-1} \tilde{\mathscr{A}}_{h+1}^{-1} \dots \tilde{\mathscr{A}}_{h+k}^{-1} v_{h+k}, \qquad \mbox{where} \quad \tilde{\mathscr{A}}_h := \mathscr{A}_h \circ \sigma_h.
\end{equation}  
Indeed, this is the formula for the unique bounded solution of (\ref{eqxu}), when the $\mathscr{A}_j^{-1}$'s are uniform contractions, which is true if and only if $\beta<2$. Since we are assuming $\beta>2$, the $\mathscr{A}_j^{-1}$'s are not contractions and the convergence of (\ref{serie}) is problematic. Nevertheless, we would like to show that if $\beta-2$ is small enough, then a suitable choice of the sequence of permutations $(\sigma_h)$ makes the series (\ref{serie}) converging.  

A key ingredient here is the existence of a partial order on the set $\mathbb{H}$, which has the property that conjugacy of an element of $\mathscr{H}$ with respect to an upper triangular linear automorphism of $\C^d$ is a linear mapping on $\mathscr{H}$ which is upper triangular with respect to such an order. In particular, $\mathscr{A}_h$ is upper triangular with respect to this order. Moreover, its inverse can be decomposed as
\[
\mathscr{A}_h^{-1} = \mathscr{M}_h^0 + \mathscr{M}_h^1,
\]
where
\[
\bigl\|\mathscr{M}_h^0\bigr\| = O\bigl(\Lambda^{m_{h+1}-m_h}\bigr), \qquad
\bigl\|\mathscr{M}_h^1\bigr\| = O\bigl((\Lambda^2M)^{m_{h+1}-m_h}\bigr),
\]
and the matrix associated to $\mathscr{M}_h^1$ has support in the subset $\mathbb{W}$ of $\mathbb{H}\times \mathbb{H}$ consisting of all pairs of indices in $\mathbb{H}\times \mathbb{H}$ which are the end-points of a chain in the partial order which has non-empty intersection with the set of resonances $\mathbb{V}$. 
Since $\Lambda^2 M>1$, the terms $\mathscr{M}_h^1$'s are the parts of large norm in the decomposition of $\mathscr{A}_h^{-1}$. In order to prove that their presence does not prevent the series (\ref{serie}) from converging, the main tool is a non-trivial combinatorial nilpotency lemma, which asserts the existence of a finite sequence $\sigma_1,\sigma_2, \dots, \sigma_N$ in $\mathfrak{S}_d$ such that
\[
\mathscr{M}_0 \sigma_1 \cdot \mathscr{M}_1 \sigma_2 \cdot \mathscr{M}_2 \dots \sigma_N \cdot
\mathscr{M}_N = 0,
\]
whenever the matrices associated to the $\mathscr{M}_j$'s have support in $\mathbb{W}$. Here $N=\binom{d}{2}$.
The sequence $(\sigma_h)$ that we use is defined by periodically repeating such a finite sequence. 

By combining these statements with analytic estimates, we can prove the existence of a solution $(h_n)$, $(g_n)$ of the conjugacy equation (\ref{eqconj}) at the level of $2$-jets, where $(g_n)$ is of the form (\ref{forma}) with $m_ h=2^{(d-1)h}$ and $(h_n)$ is a sequence of polynomial maps of degree 2, tangent to some unitary map at $0$, and such that
\[
\|h_n\| \leq C\, R^n,
\]
where $R=R(d)>1$ depends on the bunching parameter $\beta=-\log M/\log \Lambda$ and tends to $1$ for $\beta\downarrow 2$. By a rescaling argument, when $R$ is close enough to $1$, more precisely when
\begin{equation}
\label{conR}
R^2 \Lambda^3 M < 1,
\end{equation}
such an estimate allows us to conclude that the abstract basin of attraction of $0$ with respect to $(f_n)$ is biholomorphic to $\C^d$. The requirement (\ref{conR}) determines the bunching condition $\beta<2+\epsilon(d)$ in our main theorem. Possible ways of improving the exponent $\epsilon(d)$ are discussed in the final Section \ref{conrem}.

\section{The structure of the conjugacy operator}

\paragraph{Pinched sequences of linear automorphisms.}
If $(L_n)_{n\in \N}$ is a sequence of linear endomorphisms of $\C^d$ and $n\geq m \geq 0$, we denote by $L_{n,m}$ the composition
\[
L_{n,m} = L_{n-1} L_{n-2} \dots L_m, \quad L_{n,n} = I,
\]
which satisfies, for any $n\geq m \geq \ell \geq 0$, 
\[
L_{n,m} L_{m,\ell} = L_{n,\ell}.
\]

\begin{defn} 
Let $\Lambda$ and $M$ be positive numbers. We say that a sequence $(L_n)$ of linear automorphisms of $\C^d$ is $(\Lambda,M)$-pinched if there exists a positive number $C$ such that 
\begin{equation}
\label{est}
\|L_{k,h}\|\leq C \,\Lambda^{k-h}, \quad \|L_{k,h}^{-1}\|\leq C\, M^{k-h}, \qquad \mbox{for any }  0 \leq h \leq k.
\end{equation}
\end{defn}

Here $\|\cdot\|$ denotes the operator norm.
By taking $k=h+1$ in (\ref{est}), we see that a pinched sequence $(L_n)$ is bounded and so is the sequence of its inverses $(L_n^{-1})$. 
Notice also that (\ref{est}) implies that $\Lambda M \geq 1$.

\paragraph{Vector spaces with an ordered basis.} Let $V$ be a vector space over $\C$ with basis $\set{v_i}{i \in \mathbb{I}}$. The symbol $\mathrm{L}(V)$ denotes the space of linear endomorphisms of $V$. The support of $A\in \mathrm{L}(V)$ is the set of indices
\[
\supp A := \set{(i,j)\in \mathbb{I} \times \mathbb{I}}{a_{i,j} \neq 0}
\] 
which correspond to the non-vanishing coefficients of the matrix $(a_{i,j})$ representing $A$ with respect to the basis $\set{v_i}{i \in \mathbb{I}}$, with the standard convention that $a_{i,j}$ is the coefficient of $v_i$ in the decomposition of $Av_j$. It is useful to see the support of a linear endomorphism as a relation on $\mathbb{I}$, so that
\begin{equation}
\label{contro}
\supp AB \subset (\supp B) \circ (\supp A), \qquad \forall A,B\in \mathrm{L}(V),
\end{equation}
where $R\circ S$ denotes the usual composition of two relations
\[
R \circ S := \set{(i,j) \in \mathbb{I}\times \mathbb{I}}{\exists k\in \mathbb{I} \mbox{ such that } (i,k) \in S \mbox{ and } (k,j)\in R}.
\]
We also recall that the image of an element $i\in \mathbb{I}$, respectively of a subset $E\subset \mathbb{I}$, by a relation $R\subset \mathbb{I}\times \mathbb{I}$ is the following subset of $\mathbb{I}$
\[
R(i) := \set{j\in \mathbb{I}}{(i,j)\in R}, \quad \mbox{resp.} \;\;
R(E) := \bigcup_{i\in E} R(i) = \set{j \in \mathbb{I}}{\exists i\in E \mbox{ such that } (i,j)\in R}.
\]
When dealing with relations which are graphs of self-maps of $\mathbb{I}$, the notion of image and of composition reduce to the ones for maps. Moreover,
\[
R(S(E)) = (R\circ S) (E). 
\]

An endomorphism $A\in \mathrm{L}(V)$ is said to be diagonal if it is supported in the diagonal of $\mathbb{I}\times \mathbb{I}$. It is said to be a permutation automorphism if for every $i\in \mathbb{I}$ it maps $v_i$ into $v_{\sigma(i)}$, where $\sigma$ is a permutation of $\mathbb{I}$.
Assume that $\mathbb{I}$ is endowed with a partial order relation
\[
\mathbb{P} = \set{(i,j)\in \mathbb{I}}{i\leq j}.
\]
Then the endomorphism $A\in \mathrm{L}(V)$ is said to be upper triangular if the corresponding matrix is upper triangular with respect to this order, meaning that
\[
\supp A \subset \mathbb{P}.
\]
It is said to be strictly upper triangular if
\[
\supp A \subset \mathbb{P}_* := \set{(i,j)\in \mathbb{I}\times \mathbb{I}}{i<j}.
\] 
Since 
\[
\mathbb{P}\circ \mathbb{P}\subset \mathbb{P}, \qquad \mathbb{P}\circ \mathbb{P}_* \subset \mathbb{P}_*, \qquad \mathbb{P}_*\circ \mathbb{P} \subset \mathbb{P}_*,
\] 
the inclusion (\ref{contro}) implies that the composition of upper triangular endomorphisms is upper triangular, and strictly upper triangular if at least one of the factors is so. When $V=\C^d$ is equipped with its standard basis
\begin{equation}
\label{defnJ}
\set{e_i}{i\in \mathbb{J}}, \quad \mathbb{J}:= \{1,2,\dots,d\},
\end{equation}
and $\mathbb{J}\subset \N$ has the natural order, we find the standard definitions.

\paragraph{Upper triangular holomorphic maps.} Consider the flag
\[
(0) = E_0 \subset E_1 \subset E_2 \subset \dots \subset E_{d-1} \subset E_d = \C^d, \quad \mbox{where } E_j := \Span\{e_1,\ldots,e_j\}.
\]
A holomorphic map $f\colon\C^d\rightarrow \C^d$ is said to be upper triangular if it preserves this flag, that is $f(E_j) \subseteq E_j$ for every $j=0,\dots,d$. It is said to be strictly upper triangular if $f(E_j) \subseteq E_{j-1}$ for every $j=1,\dots,d$. A holomorphic map $f\colon\C^d \rightarrow \C^d$ is upper triangular (respectively, strictly upper triangular) if and and only if $f(0)=0$ and for every $j=1\dots,d$ the $j$-the component of $f$ depends only on the variables $z_j,\dots ,z_d$ (respectively, $z_{j+1},\dots, z_d$). For linear endomorphisms of $\C^d$, the notion of (strict) upper tringularity is the standard one. Again, the composition of upper triangular maps is still upper triangular, and it is strictly upper triangular if at least one of the maps is so. 

\paragraph{The conjugacy operator on the space of homogeneous polynomial maps.} Let $\mathscr{H}_k$ be the vector space of homogeneous polynomial maps $p\colon \C^d \rightarrow \C^d$ of degree $k$. We endow  $\mathscr{H}_k$ with its standard basis
\[
\set{z^{\alpha} e_i}{ (\alpha,i) \in \mathbb{H}_k},
\]
where
\[
\mathbb{A}_k := \set{\alpha\in \N^d}{|\alpha|=k}, \qquad \mathbb{H}_k := \mathbb{A}_k \times \mathbb{J},
\]
$|\alpha|:=\alpha_1 + \dots + \alpha_d$ is the degree of the multi-index
$\alpha$, and $\mathbb{J}$ is as in (\ref{defnJ}).
If we set
\[
\mathbb{T}_k := \set{(\alpha,i)\in \mathbb{H}_k }{
\alpha_j = 0 \; \forall j\leq i}, 
  \quad \mathscr{T}_k := \mathrm{span}\, \set{z^{\alpha}
    e_i}{(\alpha,i) \in \mathbb{T}_k} ,
\]
then a polynomial map 
\[
p\colon\C^d \rightarrow \C^d, \quad p = \sum_{k=1}^m p_k, \quad p_k \in \mathscr{H}_k,
\]
is strictly upper triangular if and only if each $p_k$ belongs to
$\mathscr{T}_k$.

If $L$ is a linear automorphism of $\C^d$, we consider the conjugacy operator
\[
\mathscr{A}_L : \mathscr{H}_k \rightarrow \mathscr{H}_k, \quad \mathscr{A}_L p := L^{-1} \circ p \circ L, \quad \forall p\in \mathscr{H}_k.
\]
The conjugacy operator $\mathscr{A}_L$ depends controvariantly on the automorphism $L$:
\[
\mathscr{A}_{LM} = \mathscr{A}_M \mathscr{A}_L, \qquad \forall L,M \in \mathrm{GL}(\C^d).
\]

Let $L$ be an upper triangular linear automorphism of $\C^d$, let $(\ell_{i,j})$ be the upper triangular matrix which represents it and let $\alpha\in \mathbb{A}_k$. Then the formula
\[
(Lz)^{\alpha} = \Bigl( \ell_{1,1} z_1 + \sum_{j=2}^d \ell_{1,j} z_j \Bigr)^{\alpha_1} \Bigl( \ell_{2,2} z_2 + \sum_{j=3}^d \ell_{2,j} z_j \Bigr)^{\alpha_2} \dots \bigl( \ell_{d-1,d-1} z_{d-1} + \ell_{d-1,d} z_d\bigr)^{\alpha_{d-1}} \bigl(\ell_{d,d} z_d\bigr)^{\alpha_d} 
\]
implies that
\begin{equation}
\label{Lz}
(Lz)^{\alpha} = \lambda^{\alpha} z^{\alpha} + q(z), \qquad \mbox{where } \lambda := (\ell_{1,1},\ell_{2,2}, \dots, \ell_{d,d}), 
\end{equation}
and where the degree of $q$ in the variables $z_1,\dots,z_m$ is strictly less that $\sum_{h=1}^m \alpha_h$, for every $1\leq m \leq d$. 

Decompose $L$ as $L=D+S$ with $D=\mathrm{Diag}(\lambda)$ diagonal, $\lambda\in \C^d$, and $S$ strictly upper triangular. Then $L^{-1} = D^{-1} + T$, with $T$ strictly upper triangular, and for every $p\in \mathscr{H}_k$ we have
\[
\mathscr{A}_L p = L^{-1} \circ p \circ L = D^{-1} \circ p \circ L + T \circ p \circ L.
\]
If $p(z) = z^{\alpha} e_i$ is an element of the standard basis of $\mathscr{H}_k$, then (\ref{Lz}) implies that
\[
D^{-1} \circ p \circ L \in \lambda^{\alpha - e_i} z^{\alpha} e_i + \mathrm{span} \, \set{z^{\beta} e_i}{\beta\in \mathbb{A}_k \mbox{ is such that } \sum_{h=1}^m \beta_h < \sum_{h=1}^m \alpha_h \mbox{ for every } 1\leq m \leq d}.
\]
Moreover, since $T$ is strictly upper triangular,
\[
T \circ p \circ L = (Lz)^{\alpha} T e_i \in (\lambda^{\alpha} z^{\alpha} + q(z)) E_{i-1},
\]
so the $k$-homogeneous polynomial map $T \circ p \circ L$ belongs to the subspace
\[
\set{z^{\beta} e_j}{1\leq j < i \mbox{ and } \beta\in \mathbb{A}_k \mbox{ such that }  
\sum_{h=1}^m \beta_h < \sum_{h=1}^m \alpha_h \mbox{ for every } 1\leq m \leq d}.
\]
These considerations suggest to introduce the following partial order on the set $\mathbb{H}_k$:
\begin{equation}
\label{order}
\begin{split}
\mathbb{P}_k := \set{ \bigl( (\alpha,i), (\beta,j) \bigr) \in \mathbb{H}_k \times \mathbb{H}_k }{(\alpha,i) \leq (\beta,j)}, \mbox{ where} \\
(\alpha,i) \leq (\beta,j) \quad \iff \quad i \leq j \quad \mbox{and} \quad \sum_{h=1}^m \alpha_h \leq \sum_{h=1}^m \beta_h \quad \mbox{for every } 1\leq m\leq d.
\end{split} \end{equation}
This is the direct product between the standard order of $\mathbb{J}$ and the following order on $\mathbb{A}_k$: $\alpha\leq \beta$ if and only if for every $1\leq m \leq d$ the degree of $z^{\alpha}$ with respect to the variables $z_1,\dots,z_m$ does not exceed that of $z^{\beta}$. The above considerations imply that if $L\in \mathrm{GL}(\C^d)$ is upper triangular and $\lambda\in \C^d$ is the vector of its diagonal entries, then
\[
\mathscr{A}_{L} z^{\alpha} e_i \in \lambda^{\alpha-e_i} z^{\alpha} e_i + \mathrm{span} \set{z^{\beta}e_j}{(\beta,j) < (\alpha,i) },
\]
for every $(\alpha,i)\in \mathbb{H}_k$.

If $\sigma$ belongs to the group $\mathfrak{S}(\mathbb{J})$ of permutations of the set $\mathbb{J}$, we denote by $U_{\sigma}$ the permutation automorphism of $\C^d$ such that 
\[
U_{\sigma} e_j = e_{\sigma(j)}, \quad \forall j\in \mathbb{J}. 
\]
Conjugation by the inverse of $U_{\sigma}$ produces the identity
\[
\mathscr{A}_{U_{\sigma}^{-1}} (z^{\alpha} e_i) = z^{\alpha\circ \sigma^{-1}} e_{\sigma(i)}, \quad \forall (\alpha,i) \in \mathbb{H}_k,
\]
which defines the following left action of the group $\mathfrak{S}(\mathbb{J})$ on the set $\mathbb{H}_k$:
\[
\mathfrak{S}(\mathbb{J}) \times \mathbb{H}_k \rightarrow \mathbb{H}_k, \quad \bigl(\sigma,(\alpha,i)\bigr) \mapsto \sigma\cdot (\alpha,i) := (\alpha\circ \sigma^{-1},\sigma(i)).
\]
Notice that the support of the endomorphism $\supp \mathscr{A}_{U_{\sigma}}$, thought as a relation on $\mathbb{H}_k$, acts on subsets of $\mathbb{H}_k$ as
\begin{equation}
\label{perm}
(\supp \mathscr{A}_{U_{\sigma}})(E) = \sigma \cdot E, \qquad \forall E\subset \mathbb{H}_k.
\end{equation}

We can summarize what we have seen so far into the following lemma, which gathers some of the properties of $L$ which are inherited by the conjugacy operator $\mathcal{A}_L$.

\begin{lem}
\label{tria}
Endow $\C^d$ and $\mathscr{H}_k$ with the ordered basis $\set{e_i}{i\in \mathbb{J}}$ and $\set{z^{\alpha} e_i}{(\alpha,i)\in \mathbb{H}_k}$, respectively. Let $L$ be a linear automorphism of $\C^d$.
\begin{enumerate}
\item If $L$ is diagonal then $\mathcal{A}_L$ is diagonal and
\[
\mathscr{A}_{L} z^{\alpha} e_i \in \lambda^{\alpha-e_i} z^{\alpha} e_i, \qquad \forall (\alpha,i) \in \mathbb{H}_k,
\]
where $\lambda\in \C^d$ is the vector of diagonal entries of $L$.
\item If $L$ is upper triangular then $\mathcal{A}_L$ is upper triangular and
\[
\mathscr{A}_{L} z^{\alpha} e_i \in \lambda^{\alpha-e_i} z^{\alpha} e_i + \mathrm{span} \set{z^{\beta}e_j}{(\beta,j) < (\alpha,i) }, \qquad \forall (\alpha,i) \in \mathbb{H}_k,
\]
where $\lambda\in \C^d$ is the vector of diagonal entries of $L$.
\item if $L$ is a permutation automorphism then $\mathcal{A}_L$ is a permutation automorphism and, setting $L=U_{\sigma}^{-1}$ with $\sigma \in \mathfrak{S}(\mathbb{J})$,
\[
\mathscr{A}_{U_{\sigma}^{-1}}  z^{\alpha} e_i = z^{\beta} e_j, \qquad \mbox{with } (\beta,j) := \sigma \cdot (\alpha,i) = (\alpha\circ \sigma^{-1},\sigma(i)).
\]
\end{enumerate}
\end{lem}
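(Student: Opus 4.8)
The plan is to prove Lemma~\ref{tria} as a direct consequence of the computations carried out in the preceding paragraphs, organized case by case. All three statements have the same shape: a structural property of $L$ (diagonal, upper triangular, permutation) is transferred to the conjugacy operator $\mathscr{A}_L$ acting on $\mathscr{H}_k$ with respect to the ordered basis $\set{z^\alpha e_i}{(\alpha,i)\in\mathbb{H}_k}$ and the partial order $\mathbb{P}_k$ defined in (\ref{order}). The work has essentially been done; what remains is to assemble it.

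First I would treat item (ii), the upper triangular case, since it is the main one and the other two will follow from it or from a sub-computation. Let $L\in\mathrm{GL}(\C^d)$ be upper triangular with diagonal vector $\lambda\in\C^d$, and decompose $L = D + S$ with $D = \mathrm{Diag}(\lambda)$ and $S$ strictly upper triangular, so that $L^{-1} = D^{-1} + T$ with $T$ strictly upper triangular. Apply $\mathscr{A}_L$ to a basis element $z^\alpha e_i$ and split $\mathscr{A}_L(z^\alpha e_i) = D^{-1}\circ (z^\alpha e_i)\circ L + T\circ (z^\alpha e_i)\circ L$ exactly as in the excerpt. Using (\ref{Lz}), the first summand lies in $\lambda^{\alpha - e_i} z^\alpha e_i + \mathrm{span}\,\set{z^\beta e_i}{\beta < \alpha}$, where $\beta<\alpha$ means $\sum_{h\le m}\beta_h < \sum_{h\le m}\alpha_h$ for all $m$ (hence $(\beta,i)<(\alpha,i)$ in $\mathbb{P}_k$); and the second summand, since $T$ is strictly upper triangular and $(Lz)^\alpha = \lambda^\alpha z^\alpha + q(z)$ with $\deg_{z_1,\dots,z_m} q < \sum_{h\le m}\alpha_h$, lies in $\mathrm{span}\,\set{z^\beta e_j}{j<i,\ \beta\le\alpha}$, and every such $(\beta,j)$ satisfies $(\beta,j) < (\alpha,i)$. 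Combining, $\mathscr{A}_L(z^\alpha e_i) \in \lambda^{\alpha-e_i} z^\alpha e_i + \mathrm{span}\,\set{z^\beta e_j}{(\beta,j)<(\alpha,i)}$, which is precisely the claim, and since this holds for every basis element, $\supp\mathscr{A}_L\subset\mathbb{P}_k$, i.e.\ $\mathscr{A}_L$ is upper triangular.

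Item (i) is then the specialization $S = 0 = T$: only the diagonal term $D^{-1}\circ(z^\alpha e_i)\circ D = \lambda^{\alpha-e_i} z^\alpha e_i$ survives, so $\mathscr{A}_L$ is diagonal with the stated eigenvalues. Item (iii) is a separate but elementary direct computation: for $\sigma\in\mathfrak{S}(\mathbb{J})$ and $L = U_\sigma^{-1}$, one has $\mathscr{A}_L(z^\alpha e_i) = U_\sigma\circ(z^\alpha e_i)\circ U_\sigma^{-1}$; since $U_\sigma^{-1}(z) = U_{\sigma^{-1}}(z)$ has $k$-th coordinate $z_{\sigma(k)}$, substituting gives $(U_\sigma^{-1} z)^\alpha = \prod_k z_{\sigma(k)}^{\alpha_k} = z^{\alpha\circ\sigma^{-1}}$, and $U_\sigma e_i = e_{\sigma(i)}$, so $\mathscr{A}_L(z^\alpha e_i) = z^{\alpha\circ\sigma^{-1}} e_{\sigma(i)}$, which identifies $\mathscr{A}_{U_\sigma^{-1}}$ as the permutation automorphism sending $(\alpha,i)\mapsto\sigma\cdot(\alpha,i)$. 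I do not anticipate a genuine obstacle here: the lemma is a bookkeeping summary, and the only point requiring minor care is verifying in item (ii) that the index $\beta$ appearing in the non-diagonal contributions of \emph{both} summands genuinely satisfies $(\beta,j)<(\alpha,i)$ strictly in the order $\mathbb{P}_k$ — for the $T$-term this is clear because $j<i$ already forces strictness, while for the $D^{-1}$-term it follows from $\sum_{h\le d}\beta_h = k = \sum_{h\le d}\alpha_h$ together with strict inequality at some $m<d$, so the two multi-indices are incomparable-from-above in the required sense; once this is noted the proof is complete.
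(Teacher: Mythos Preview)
Your proposal is correct and follows exactly the paper's approach: the lemma is stated there as a summary of the computations already carried out (the expansion (\ref{Lz}), the decomposition $L=D+S$, $L^{-1}=D^{-1}+T$, and the permutation identity), and you have simply assembled those pieces case by case as intended. One small slip worth tightening: your parenthetical ``$\beta<\alpha$ means $\sum_{h\le m}\beta_h<\sum_{h\le m}\alpha_h$ for all $m$'' is too strong (at $m=d$ equality always holds, and strictness need not occur at every $m<d$ either); the correct statement is $\beta\le\alpha$ with $\beta\neq\alpha$, which you in fact invoke correctly in your closing remark, so the conclusion $(\beta,j)<(\alpha,i)$ stands.
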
 

\paragraph{The case of degree 2.} We now specialize the analysis to the case of degree 2, and we drop the subscript $2$ in the notation, thus setting $\mathscr{H}=\mathscr{H}_2$, $\mathbb{A}=\mathbb{A}_2$, $\mathbb{H}=\mathbb{H}_2$, $\mathbb{T}=\mathbb{T}_2$, $\mathbb{P}=\mathbb{P}_2$.
 
Given $s\in \mathbb{H}$, we set
\[
s^{\uparrow} := \set{r\in \mathbb{H}}{r\geq s},
\]
and, for $E\subset \mathbb{H}$,
\[
E^{\uparrow} := \bigcup_{s\in E} s^{\uparrow},
\]
where we are using the order defined in (\ref{order}). The mapping $E\mapsto E^{\uparrow}$ is the closure operator associated to the left order topology on the partially ordered set $(\mathbb{H},\leq)$ (see e.g.\ \cite{ss78}). Notice that 
\[
\mathbb{T} = \bigl( (\mathbb{V}^c)^{\uparrow} \bigr)^c,
\]
where
\[
\mathbb{V}:= \set{(\alpha,i)\in \mathbb{H}}{\alpha_i = 0},
\]
that is, $\T$ is the interior part of the $\mathfrak{S}(\mathbb{J})$-invariant set $\mathbb{V}$ in the lower order topology. The following relation on $\mathbb{H}$ turns out to be useful in order to give estimates for the conjugacy operator by upper triangular automorphisms: 
\[
\mathbb{W} := \set{ (s,t)\in \mathbb{H} \times \mathbb{H} }{s\in \mathbb{T}^c \mbox{ and } \exists r\in \mathbb{V} \mbox{ such that } s\leq r \leq t} = \bigcup_{s\in \T^c} \{s\} \times ( \mathbb{V} \cap s^{\uparrow} )^{\uparrow}.
\]
Since $\T^c = (\T^c)^{\uparrow}$, we have
\[
\mathbb{W} \subset \T^c \times \T^c.
\]
The image of a subset $E\subset \mathbb{H}$ by the relation $\mathbb{W}$ is the subset
\begin{equation}
\label{stra}
\begin{split}
\mathbb{W}(E) = \bigcup_{s\in E} \mathbb{W}(s) =   \bigcup_{s\in E\setminus \T} ( \mathbb{V} \cap s^{\uparrow} )^{\uparrow} = \Bigl( \bigcup_{s\in E\setminus \T}  ( \mathbb{V} \cap s^{\uparrow} ) \Bigr)^{\uparrow} \\ = \Bigl( \mathbb{V} \cap \bigcup_{s\in E\setminus \T} s^{\uparrow} \Bigr)^{\uparrow} = \Bigl( \mathbb{V} \cap (E\setminus \T)^{\uparrow} \Bigr)^{\uparrow} \subset E^{\uparrow},
\end{split}
\end{equation}
where in the third equality we have used the fact that $E\mapsto E^{\uparrow}$ is a closure operator. We claim that
\begin{equation}
\label{idenpot}
\mathbb{W} \circ \mathbb{W} = \mathbb{W}.
\end{equation}
Indeed, for every $s\in \mathbb{H}$, by taking the closures of the trivial inclusions
\begin{eqnarray*}
\mathbb{V} \cap \bigl( \mathbb{V} \cap s^{\uparrow} \bigr)^{\uparrow} & \subset & \bigl (\mathbb{V} \cap s^{\uparrow} \bigr)^{\uparrow}, \\
\mathbb{V} \cap s^{\uparrow} & \subset  & \mathbb{V} \cap \bigl (\mathbb{V} \cap s^{\uparrow} \bigr)^{\uparrow},
\end{eqnarray*}
we obtain the identity
\[
\Bigl( \mathbb{V} \cap \bigl( \mathbb{V} \cap s^{\uparrow}  \bigr)^{\uparrow} \Bigr)^{\uparrow} = \bigl( \mathbb{V} \cap s^{\uparrow} \bigr)^{\uparrow} = \mathbb{W}(s),
\]
which implies
\[
\mathbb{W} \circ \mathbb{W} (s) = \mathbb{W} (\mathbb{W}(s)) = \Bigl( \mathbb{V} \cap \bigl( \mathbb{W}(s) \setminus \T \bigr)^{\uparrow} \Bigr)^{\uparrow} = \Bigl( \mathbb{V} \cap \bigl( \mathbb{W}(s)\bigr)^{\uparrow} \Bigr)^{\uparrow} = \Bigl( \mathbb{V} \cap \bigl( \mathbb{V} \cap s^{\uparrow}  \bigr)^{\uparrow} \Bigr)^{\uparrow} = \mathbb{W}(s).
\]

Let 
\[
\mathscr{Q} : \mathscr{H} \rightarrow \mathscr{H},
\]
be the linear projector onto the subspace spanned by $z^{\alpha}e_i$ for $(\alpha,i)$ in $\T^c$, along the subspace spanned by $z^{\alpha}e_i$ for $(\alpha,i)$ in $\T$.
We notice that if $L$ is upper triangular, then $\mathscr{T} = \ker \mathscr{Q}$ is $\mathscr{A}_L$-invariant, or equivalently
\[
\mathscr{Q} \mathscr{A}_L =  \mathscr{Q} \mathscr{A}_L \mathscr{Q}.
\]
Let $(L_n)$ be a $(\Lambda,M)$-pinched sequence of upper triangular linear automorphisms of $\C^d$. We fix some $n\in \N$ and consider the conjugacy operator $\mathscr{A}_{L_{n,0}}$ on the space $\mathscr{H}$ of homogenous polynomial maps of degree 2. By the inequality
\[
\| \mathscr{A}_{L_{n,0}} u \| = \| L_{n,0}^{-1} \circ u \circ L_{n,0} \| \leq \|L_{n,0}^{-1}\| \|L_{n,0}\|^2 \|u\| , \qquad \forall u\in \mathscr{H},
\]
the pinching estimate (\ref{est}) implies the following upper bound on the operator norm of $\mathscr{A}_{L_{n,0}}$:
\begin{equation}
\label{norma}
\|\mathscr{A}_{L_{n,0}}\| \leq C^3 (\Lambda^2 M)^n, \qquad \forall n\in \N.
\end{equation}
Moreover, we know from Lemma \ref{tria} (ii) that the operator $\mathscr{A}_{L_{n,0}}$ is upper triangular, meaning that its support is contained in $\mathbb{P}$. The following lemma says that  the coefficients of $\mathscr{A}_{L_{n,0}}$ which belong to the set $(\mathbb{T}^c \times \mathbb{T}^c)\setminus \mathbb{W}$ have an upper bound which is better than the one implied by (\ref{norma}):

\begin{lem}
\label{decomp}
Let $(L_n)$ be a $(\Lambda,M)$-pinched sequence of upper triangular linear automorphisms of $\C^d$. Consider the decomposition 
\[
\mathscr{Q} \mathscr{A}_{L_{n,0}} =  \mathscr{Q} \mathscr{A}_{L_{n,0}} \mathscr{Q} = \mathscr{M}^0 + \mathscr{M}^1
\]
with
\[
\supp \mathscr{M}^0 \subset (\T^c \times \T^c) \setminus \mathbb{W}, \qquad
\supp \mathscr{M}^1 \subset \mathbb{W}.
\]
Then there exists positive numbers $C_0$ and $C_1$ such that
\begin{eqnarray}
\label{buona}
\| \mathscr{M}^0 \| &\leq &  C_0 \,n^{3(d-1)} \,\Lambda^n,  \\
\label{cattiva}
\| \mathscr{M}^1 \| &\leq & C_1 \, (\Lambda^2 M)^n,
\end{eqnarray}
for every $n\in \N$.
\end{lem}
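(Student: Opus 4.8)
The plan is to reduce both estimates to bounds on the individual matrix entries $\bigl(\mathscr{A}_{L_{n,0}}\bigr)_{s,t}$, $s,t\in\mathbb{H}$, of the conjugacy operator with respect to the standard basis $\{z^{\alpha}e_i\}$ of $\mathscr{H}$: since $\mathscr{H}$ is finite dimensional, passing from an entrywise bound to a bound on the operator norm costs only a factor $\dim\mathscr{H}$. With this reduction, the estimate (\ref{cattiva}) for $\mathscr{M}^1$ is immediate, because $\mathscr{Q}$ is a coordinate projection, so $\supp\mathscr{M}^1\subset\mathbb{W}\subset\mathbb{T}^c\times\mathbb{T}^c$ and each entry of $\mathscr{M}^1$ agrees with the corresponding entry of $\mathscr{A}_{L_{n,0}}$, which is controlled by $\|\mathscr{A}_{L_{n,0}}\|\le C^3(\Lambda^2M)^n$ thanks to (\ref{norma}). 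So all the content is in (\ref{buona}).

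For (\ref{buona}) I would first expand $\bigl(\mathscr{A}_{L_{n,0}}\bigr)_{s,t}$ as a sum over chains. By the controvariance of $L\mapsto\mathscr{A}_L$ one has $\mathscr{A}_{L_{n,0}}=\mathscr{A}_{L_0}\mathscr{A}_{L_1}\cdots\mathscr{A}_{L_{n-1}}$, and since each factor is upper triangular for the order (\ref{order}) by Lemma \ref{tria}(ii), the entry $\bigl(\mathscr{A}_{L_{n,0}}\bigr)_{s,t}$ equals the finite sum, over all weakly increasing chains $s=r_0\le r_1\le\cdots\le r_n=t$ in $(\mathbb{H},\le)$, of the products $\prod_{j=0}^{n-1}\bigl(\mathscr{A}_{L_j}\bigr)_{r_j,r_{j+1}}$. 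The key combinatorial observation is then: if $(s,t)\in(\mathbb{T}^c\times\mathbb{T}^c)\setminus\mathbb{W}$, so that (as $s\in\mathbb{T}^c$) no $r\in\mathbb{V}$ satisfies $s\le r\le t$, every vertex $r_j$ of every such chain necessarily lies in $\mathbb{V}^c$.

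Next I would estimate the product attached to a single such chain. Call a step $j$ \emph{flat} if $r_j=r_{j+1}$ and \emph{strict} otherwise. Let $N_d$ denote the height of the poset $(\mathbb{H},\le)$; since $(\mathbb{H},\le)$ is the product of $(\mathbb{J},\le)$, of height $d-1$, with $(\mathbb{A},\le)$, of height $2(d-1)$ (a longest chain in $\mathbb{A}$ runs from $2e_d$ to $2e_1$, each covering step raising one partial sum by one), we get $N_d=3(d-1)$; hence every chain has at most $N_d$ strict steps, and its flat steps form at most $N_d+1$ maximal runs, each run sitting at a fixed vertex. For a run of length $\ell$ at a vertex $r=(\alpha,i)$, occupying the steps $a,\dots,a+\ell-1$, the product of its factors is $\bigl(\prod_{j=a}^{a+\ell-1}\lambda(j)\bigr)^{\alpha-e_i}$ (componentwise), where $\lambda(j)$ is the diagonal of $L_j$, by the diagonal part of Lemma \ref{tria}(ii); and since the diagonal of a product of upper triangular matrices is the product of the diagonals, each coordinate $\prod_{j=a}^{a+\ell-1}\lambda(j)_m$ is the $(m,m)$-entry of the block $L_{a+\ell,a}$, so has modulus $\le\|L_{a+\ell,a}\|\le C\Lambda^{\ell}$ and reciprocal of modulus $\le\|L_{a+\ell,a}^{-1}\|\le CM^{\ell}$ by (\ref{est}). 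Because $r\in\mathbb{V}^c$ with $|\alpha|=2$, the exponent $\alpha-e_i$ is a single standard basis vector $e_m$, so the run contributes at most $C\Lambda^{\ell}$; each strict step contributes at most a constant times $\|\mathscr{A}_{L_j}\|\le\|L_j^{-1}\|\,\|L_j\|^2\le C^3\Lambda^2M$. Multiplying over the $\le N_d+1$ runs and the $\le N_d$ strict steps, and using $\Lambda M\ge1$, the chain contributes at most $C_d'\,\Lambda^n$, with $C_d'$ depending only on $d,C,\Lambda,M$.

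Finally I would count the chains: a weakly increasing chain $s=r_0\le\cdots\le r_n=t$ is determined by the set of its $k\le N_d$ strict-step positions ($\binom{n}{k}$ choices) together with the strictly increasing sequence of its distinct values (at most $|\mathbb{H}|^{N_d}$ choices, independent of $n$), so their number is $\le C_d''\,n^{N_d}$. Combining, $\bigl|\bigl(\mathscr{M}^0\bigr)_{s,t}\bigr|\le C_d'C_d''\,n^{3(d-1)}\Lambda^n$ for all $s,t$, and multiplying by $\dim\mathscr{H}$ yields (\ref{buona}). The step I expect to be the main obstacle is the estimate of the chain's product: the pinching constant $C$ satisfies $C\ge1$, so bounding the flat-step factors one at a time would only give $(C\Lambda)^{n-k}$, which is useless once $C\Lambda\ge1$; the way around it is precisely to group maximal runs of flat steps and apply the pinching inequality (\ref{est}) to the whole block $L_{a+\ell,a}$, which is legitimate because upper triangular matrices multiply diagonally. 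The bounded height $3(d-1)$ of $\mathbb{H}$ is what simultaneously caps the number of such blocks and keeps the chain count polynomial rather than exponential in $n$.
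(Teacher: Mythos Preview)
Your proposal is correct and follows essentially the same approach as the paper: the paper likewise expands the matrix entry as a sum over chains in $(\mathbb{H},\le)$, uses that $(s,t)\in(\mathbb{T}^c\times\mathbb{T}^c)\setminus\mathbb{W}$ forces every chain vertex into $\mathbb{V}^c$, estimates each maximal run of diagonal steps via the pinching bound (\ref{est}) applied to the block $L_{a+\ell,a}$, and caps the number of strict steps by the poset height $3(d-1)$ to obtain both the polynomial prefactor and the finitely many constants. The only cosmetic difference is that the paper packages the chain expansion through an explicit splitting $\mathscr{A}_{L_h}=\mathscr{B}_h^0+\mathscr{B}_h^1$ into diagonal and strictly upper triangular parts and a sum over $\epsilon\in\mathbbm{2}^n$, which is exactly your flat/strict step dichotomy.
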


\begin{proof}
Estimate (\ref{cattiva}) is an immediate consequence of (\ref{norma}), so the estimate to be proved is (\ref{buona}). 
Let 
\[
\lambda_h = (\lambda_h(1),\dots,\lambda_h(d))\in \C^d
\]
be the vector of diagonal entries of $L_h$. By Lemma \ref{tria} (ii), $\mathscr{A}_{L_h}$ decomposes as the sum 
\[
\mathscr{A}_{L_h} = \mathscr{B}_h^0 + \mathscr{B}_h^1,
\]
where $\mathscr{B}_h^0$ is diagonal and $\mathscr{B}_h^1$ is strictly upper triangular, with coefficients 
\begin{equation}
\label{la0}
b_h^0 \bigl((\alpha,i),(\beta,j)\bigr) = \langle \mathscr{B}_h^0 z^{\beta} e_j, z^{\alpha} e_i \rangle = \left\{ \begin{array}{ll} \lambda_h^{\alpha-e_i}, & \mbox{if } (\alpha,i) = (\beta,j), \\ 0 , & \mbox{otherwise}, \end{array} \right. 
\end{equation}
respectively
\begin{equation}
\label{la1}
b_h^1 \bigl((\alpha,i),(\beta,j)\bigr) = \langle \mathscr{B}_h^1 z^{\beta} e_j, z^{\alpha} e_i \rangle = \left\{ \begin{array}{ll} \langle \mathscr{A}_{L_h} z^{\beta} e_j, z^{\alpha} e_i \rangle, & \mbox{if } (\alpha,i) < (\beta,j), \\ 0 , & \mbox{otherwise}. \end{array} \right. 
\end{equation}
Here $\langle \cdot, \cdot \rangle$ denotes the inner product on $\mathscr{H}$ which makes $\set{z^{\alpha}e_i}{(\alpha,i)\in \mathbb{H}}$ an orthonormal basis.
Since the sequences $(L_n)$ and $(L_n^{-1})$ are bounded, 
\begin{equation}
\label{eA}
| b_h^1 \bigl((\alpha,i),(\beta,j)\bigr) | \leq c, \qquad \forall h\in \N,
\end{equation}
for some $c>0$. By the pinching condition (\ref{est}), if $(\alpha,i)$ belongs to $\mathbb{V}^c$, that is $\alpha = e_i + e_j$ for some $j$ then
\begin{equation}
\label{eB}
| b_{k,h}^0 \bigl((\alpha,i),(\alpha,i)\bigr) | = \left| \frac{\lambda_{k,h} (i) \lambda_{k,h} (j)}{\lambda_{k,h} (i)} \right| = 
 | \lambda_{k,h} (j) | \leq \|L_{k,h}\| \leq C\, \Lambda^{k-h},
\end{equation}
for every $0\leq h \leq k$. 

By using the above decomposition of each $\mathscr{A}_{L_h}$, we find the following formula for the coefficients of $\mathscr{A}_{L_{n,0}}$:
\begin{eqnarray*}
\langle \mathscr{A}_{L_{n,0}} z^{\beta} e_j, z^{\alpha} e_i \rangle &=& \langle \mathscr{A}_{L_0} \mathscr{A}_{L_1} \dots  \mathscr{A}_{L_{n-1}} z^{\beta} e_j, z^{\alpha} e_i \rangle \\
 &=&  \langle (\mathscr{B}_0^0 + \mathscr{B}_0^1) (\mathscr{B}_1^0 + \mathscr{B}_1^1)\dots  (\mathscr{B}_{n-1}^0 + \mathscr{B}_{n-1}^1) z^{\beta} e_j, z^{\alpha} e_i \rangle \\ &=& \sum_{\epsilon\in \mathbbm{2}^n} \langle \mathscr{B}_0^{\epsilon_0}  \mathscr{B}_1^{\epsilon_1} \dots  \mathscr{B}_{n-1}^{\epsilon_{n-1}} z^{\beta} e_j, z^{\alpha} e_i \rangle \\
 &=& \sum_{\epsilon\in \mathbbm{2}^n} \sum_{\substack{s \in \mathbb{H}^{n+1}\\ s_0 = (\alpha,i), \; s_n = (\beta,j)}} b_0^{\epsilon_0} (s_0,s_1)  b_1^{\epsilon_1} (s_1,s_2) \dots b_{n-1}^{\epsilon_{n-1}} (s_{n-1},s_n).
 \end{eqnarray*}
Since the $\mathscr{B}_h^0$'s are diagonal and the $\mathscr{B}_h^1$'s are strictly upper triangular, the non-zero terms in the last sum correspond to sequences $s=(s_0,\dots,s_n)\in \mathbb{H}^{n+1}$ and $\epsilon=(\epsilon_0,\dots,\epsilon_{n-1}) \in \mathbbm{2}^n$  such that
\[
(\alpha,i) = s_0 \leq s_1 \leq \dots \leq s_{n-1} \leq s_n = (\beta,j),
\]
where 
\[
s_h < s_{h+1} \quad \iff \quad \epsilon(h)=1.
\]
Therefore, the above formula can be rewritten as
\[
\langle \mathscr{A}_{L_{n,0}} z^{\beta} e_j, z^{\alpha} e_i \rangle = \hskip-1cm  \sum_{\substack{k\in \N \\ (\alpha,i) = t_0 < t_1 < \dots < t_k = (\beta,j) \\ 0 \leq n_1 < n_2 < \dots < n_k < n}} \hskip-1cm  b_{n_1,0}^0(t_0) b_{n_1}^1(t_0,t_1) b_{n_2,n_1+1}^0(t_1) \dots b_{n_k}^1(t_{k-1},t_k) b_{n,n_k+1}^0(t_k).
\]
The height of the partially ordered set $(\mathbb{H},\leq)$, that is the maximal cardinality of chains in $\mathbb{H}$, is $3d-2$. It follows that the natural number $k$ in the above sum is at most $3(d-1)$. If we also assume that
\begin{equation}
\label{dovedove}
\bigl( (\alpha,i) , (\beta,j) \bigr) \in (\T^c \times \T^c) \setminus \mathbb{W},
\end{equation}
then the definition of the relation $\mathbb{W}$ implies that in every chain 
\[
(\alpha,i) = t_0 < t_1 < \dots < t_k = (\beta,j)
\]
each index $t_h$, $0\leq h \leq k$, belongs to $\mathbb{V}^c$. Thus, under the assumption (\ref{dovedove}), the estimates (\ref{eA}) and (\ref{eB}) imply the bound
\[
\bigl| \langle \mathscr{A}_{L_{n,0}} z^{\beta} e_j, z^{\alpha} e_i \rangle \bigr| \leq\hskip-1cm   \sum_{\substack{0\leq k\leq 3(d-1)\\ (\alpha,i) = t_0 < t_1 < \dots < t_k = (\beta,j) \\ 0 \leq n_1 < n_2 < \dots < n_k < n}} \hskip-1cm \bigl( C\, \Lambda^{n_1} \bigr) c \bigl( C\, \Lambda^{n_2-n_1-1} \bigr)  \dots c \bigl( C\, \Lambda^{n-n_k-1} \bigr) = \Lambda^n \hskip-1cm   \sum_{\substack{0\leq k\leq 3(d-1) \\ (\alpha,i) = t_0 < t_1 < \dots < t_k = (\beta,j) \\ 0 \leq n_1 < n_2 < \dots < n_k < n}} \hskip-1cm c^k C^{k+1} \Lambda^{-k}.
\]
Since $\mathbb{H}$ is a finite set and the set of $k$-uples $(n_1,\dots,n_k)\in \N^k$ such that $0\leq n_1 <  \dots < n_k < n$ has 
\[
\binom{n}{k} \leq e \, n^k \leq e\, n^{3(d-1)}, 
\]
elements, the last sum is bounded from above by $c_1 n^{3(d-1)}$, for some constant $c_1$, and we find
\[
\bigl| \langle \mathscr{A}_{L_{n,0}} z^{\beta} e_j, z^{\alpha} e_i \rangle \bigr| \leq c_1 n^{3(d-1)} \Lambda^n, \quad \forall \bigl( (\alpha,i) , (\beta,j) \bigr) \in (\T^c \times \T^c) \setminus \mathbb{W}.
\]
The estimate (\ref{buona}) follows. 
\end{proof}

\section{A nilpotency result}

We continue to work with the vector space $\mathscr{H}_2$, equipped with its standard basis indexed by $\mathbb{H}_2$, and we drop the subscript 2 from the notation, as in the last part of the previous section.
We denote by $\kappa_h\in \mathfrak{S}(\mathbb{J})$, for $1\leq h \leq d$, the cyclic permutation
\[
\kappa_h := (1,2,\dots,h)
\]
over the first $h$ elements.
We consider the finite sequence of elements of $\mathbb{J}$
\[
n = n^{(d)} = \bigl( n_1, n_2, \dots, n_{\binom{d}{2}} \bigr)
\]
defined as
\[
\begin{array}{cr}
\emptyset & \quad\mbox{for } d=1, \\
(2) & \quad\mbox{for } d=2, \\
(2,2,3) & \quad\mbox{for } d=3, \\
(2,3,2,2,3,4) & \quad\mbox{for } d=4, \\
(2,3,4,2,3,2,2,3,4,5) & \quad\mbox{for } d=5,
\end{array} 
\]
and in general $n^{(d)}$ is obtained by juxtaposition of the strings $(2,3,\dots,d-1)$, $n^{(d-1)}$ and $(d)$. The following lemma is the main combinatorial tool which leads to the estimates of Section \ref{2jetssec}. 

\begin{lem}
\label{combi}
The finite sequence of elements of the group of permutations $\mathfrak{S}(\mathbb{J})$ 
\[
\Bigl(\sigma_1, \sigma_2, \dots, \sigma_{\binom{d}{2}} \Bigr) := \Bigl( \kappa_{n_1}, \kappa_{n_2}, \dots, \kappa_{n_{\binom{d}{2}}} \Bigr)
\]
satisfies 
\begin{equation}
\label{latesi}
\mathbb{W}\, \sigma_{\binom{d}{2}} \, \mathbb{W} \, \sigma_{\binom{d}{2}-1} \, \dots \, \sigma_2 \, \mathbb{W} \, \sigma_1 \, \mathbb{W}  = \emptyset.
\end{equation}
\end{lem}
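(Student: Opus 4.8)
The identity (\ref{latesi}) is an identity between relations on the finite set $\mathbb{H}$, and I would read it dynamically: it says that, for every subset $E\subseteq\mathbb{H}$, the set obtained from $E$ by first applying $E'\mapsto\mathbb{W}(E')$ and then, for $j=1,\dots,\binom{d}{2}$ in turn, the map $E'\mapsto\sigma_j\cdot E'$ followed by $E'\mapsto\mathbb{W}(E')$, is empty. Since $E'\mapsto\mathbb{W}(E')$ and $E'\mapsto\sigma\cdot E'$ are monotone for inclusion and $\mathbb{W}(E')\subseteq\mathbb{W}(\mathbb{H})$, it suffices to treat $E=\mathbb{H}$. I will repeatedly use the following: every set obtained after at least one application of $\mathbb{W}$ is upward closed for the order (\ref{order}) and disjoint from $\mathbb{T}$, since for upward closed $F$ one has $\mathbb{W}(F)=\bigl((\mathbb{V}\setminus\mathbb{T})\cap F\bigr)^{\uparrow}$ — here $(F\setminus\mathbb{T})^{\uparrow}=F\cap\mathbb{T}^c$ because $\mathbb{T}^c$ is upward closed — and $\mathbb{V}\setminus\mathbb{T}\subseteq\mathbb{T}^c$; the idempotency $\mathbb{W}\circ\mathbb{W}=\mathbb{W}$ from (\ref{idenpot}); the $\mathfrak{S}(\mathbb{J})$-invariance of $\mathbb{V}$, and non-invariance of $\mathbb{T}$; and the description of $\mathbb{W}(F)$ via its minimal elements, which by (\ref{stra}) are the resonances in $\mathbb{V}$ lying just above the generators of $F$.

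The proof is by induction on $d$. For $d=1$ one has $\mathbb{V}=\emptyset$, hence $\mathbb{W}=\emptyset$ and nothing to prove; for $d=2$ the list reduces to $\sigma_1=\kappa_2=(1\,2)$, and one checks directly that $\mathbb{W}(\mathbb{H})=\{z_1^2e_2\}$, that $\kappa_2\cdot\{z_1^2e_2\}=\{z_2^2e_1\}\subseteq\mathbb{T}$, and that $\mathbb{W}$ kills any subset of $\mathbb{T}$. For the inductive step I would use the recursion $n^{(d)}=(2,3,\dots,d-1)\ast n^{(d-1)}\ast(d)$, which splits the list of permutations into a \emph{prefix} $\kappa_2,\dots,\kappa_{d-1}$, a \emph{middle} block equal to the full $(d-1)$-dimensional list, and a \emph{final} $\kappa_d$. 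With $D:=\set{(\alpha,i)\in\mathbb{H}}{\alpha_d=0\text{ and }i=d}$, the three blocks have the following roles. \emph{(a) Prefix:} applying $\mathbb{W},\kappa_2,\mathbb{W},\kappa_3,\dots,\kappa_{d-1},\mathbb{W}$ to $\mathbb{H}$ gives a set contained in $\set{(\alpha,i)\in\mathbb{H}}{\alpha_d=0}$. \emph{(b) Middle:} the maps $\mathbb{W}$ and $\kappa_h$ with $h\le d-1$ all preserve $\set{(\alpha,i)}{\alpha_d=0}$ (the $\kappa_h$ fix the last coordinate, and upward closure cannot make $\alpha_d$ positive, since it preserves the partial sum over $\{1,\dots,d-1\}$, which equals $2$); restricted to that set they reproduce the $(d-1)$-dimensional dynamics, with the target value $d$ absorbing whatever gets discarded in dimension $d-1$, so that running the $(d-1)$-list — which, up to merging the $\mathbb{W}$'s at the junction by idempotency, is the middle block — drives any subset of $\set{(\alpha,i)}{\alpha_d=0}$ into $D$, by the inductive hypothesis in this relativized form. \emph{(c) Final:} $\kappa_d$ maps $D$ into $\mathbb{T}$, because for $\alpha_d=0$ the multi-index $\alpha\circ\kappa_d^{-1}$ has first entry $\alpha(\kappa_d^{-1}(1))=\alpha(d)=0$, while $\kappa_d(d)=1$, so $\kappa_d\cdot(\alpha,d)=(\alpha\circ\kappa_d^{-1},1)\in\mathbb{T}$; the trailing $\mathbb{W}$ then annihilates it. Concatenating (a), (b), (c) proves (\ref{latesi}).

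The real content is in (a) and (b). For (b) the delicate point is to formulate the inductive hypothesis in the correct relative form: on $\set{(\alpha,i)}{\alpha_d=0}$ the order (\ref{order}), the sets $\mathbb{V}$ and $\mathbb{T}$, and the permutations $\kappa_h$ $(h\le d-1)$ restrict to their $(d-1)$-dimensional analogues on the layers with target $<d$, while the top layer $D$ is exactly the debris that the $d$-dimensional upward closures export from those lower layers — so the $(d-1)$-dynamics, which empties $\mathbb{H}^{(d-1)}$ by induction, here leaves only $D$. Proving this simulation claim, together with (a) — where one tracks the elements of $\mathbb{V}\setminus\mathbb{T}$ with $\alpha_d>0$, namely the indices $z_kz_de_i$ with $k<i<d$, and checks that the ascending run of cyclic rotations $\kappa_2,\dots,\kappa_{d-1}$, interleaved with $\mathbb{W}$, pushes each of them upward until it leaves $\mathbb{V}\setminus\mathbb{T}$ — is where all the combinatorics sits. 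Both use the minimal-element description of $\mathbb{W}(\cdot)$ from (\ref{stra}) and the action formula $\sigma\cdot(\alpha,i)=(\alpha\circ\sigma^{-1},\sigma(i))$, glued by idempotency of $\mathbb{W}$. I expect (a) to be the hardest single point, as that is where the specific choice of the run $\kappa_2,\dots,\kappa_{d-1}$, rather than some other generating family, must be used in an essential way.
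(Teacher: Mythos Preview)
Your proposal is correct and follows essentially the same route as the paper: the same base cases, the same three-block decomposition of $n^{(d)}$ into prefix $(2,\dots,d-1)$, middle $n^{(d-1)}$, and final $(d)$, the same reduction set $Y=\{(\alpha,i):\alpha_d=0\}$ after the prefix (the paper's claim (\ref{claim})), the same relativized inductive hypothesis showing the middle block drives $Y$ into $Y\cap(\mathbb{A}\times\{d\})$, and the same final observation that $\kappa_d$ sends this into $\mathbb{T}$. The one place where you leave a sketch --- step (a) --- is exactly where the paper does the explicit work, introducing auxiliary sets $X_r$ and proving $\kappa_{r+1}(\mathbb{W}(X_r))\subset X_{r+1}$ inductively; your informal description of tracking the resonances $z_kz_de_i$ with $k<i<d$ is the right picture but would need to be made precise along those lines.
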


\begin{proof}
When $d=1$, $\mathbb{V}$ is empty and so is $\mathbb{W}$, hence the identity (\ref{latesi}) trivially holds for the empty string. When $d=2$, we have
\[
\mathbb{T} = \{ (2e_2,1)\}, \quad \mathbb{V}=\{(2e_2,1),(2e_1,2)\}.
\]
Since the set $\mathbb{V}\setminus \mathbb{T} = \{(2e_1,2)\}$ contains only the maximum of $\mathbb{H}$ by (\ref{stra}),  there holds
\[
\mathbb{W}(\mathbb{H}) = \{(2e_1,2)\}.
\]
Therefore
\[
\mathbb{W} (\kappa_2 ( \mathbb{W}(\mathbb{H}))) = \mathbb{W}(2e_2,1) = \emptyset,
\]
and the string $(\kappa_2)$ satisfies (\ref{latesi}).

Let $d\geq 3$ and consider the closed set
\[
Y:= \set{(\alpha,i)\in \mathbb{H}}{\alpha_d=0}.
\]
If $1\leq i \leq d$, then
\begin{equation}
\label{cuno}
\bigl( \mathbb{A} \times \{i\} \setminus \T \bigr)^{\uparrow} = \set{ (\alpha,i)\in \mathbb{H}}{\exists j\leq i \mbox{ such that } \alpha_j\neq 0}^{\uparrow} = (e_i+e_d,i)^{\uparrow},
\end{equation}
because $(e_i+e_d,1)$ is the minimum of $\mathbb{A} \times \{i\} \setminus \T$. Since $(e_1+e_{d-1},1)$ is the minimum of $\mathbb{A}\times \{1\} \setminus \{(e_1+e_d,1)\} \setminus \T$, we also have
\begin{equation}
\label{cdue}
\bigl( \mathbb{A} \times \{1\} \setminus \{(e_1+e_d,1)\} \setminus \T \bigr)^{\uparrow} = (e_1+e_{d-1},1)^{\uparrow}.
\end{equation}
For $1<i<d$, there holds
\[
(e_i+e_d,i)^{\uparrow} = \{(e_i+e_d,i)\} \cup (e_{i-1}+e_d,i)^{\uparrow} \cup (e_i+e_{d-1},i)^{\uparrow} \cup (e_i+e_d,i+1)^{\uparrow}.
\]
Since $(e_i+e_d,i)$ does not belong to $\mathbb{V}$ and $(e_i+e_{d-1},i)^{\uparrow}\subset Y$, the above identity and (\ref{cuno}) imply
\[
\bigl( \mathbb{A} \times \{i\} \setminus \T \bigr)^{\uparrow} \cap \mathbb{V} \subset Y \cup (e_{i-1}+e_d,i)^{\uparrow} \cup (e_i+e_d,i+1)^{\uparrow}.
\]
The set on the right is closed, so by (\ref{stra}),
\begin{equation}
\label{ctre}
\mathbb{W}\bigl(\mathbb{A}\times \{i\}\bigr) = \Bigl( \bigl( \mathbb{A}\times \{i\} \setminus \T \bigr)^{\uparrow} \cap \mathbb{V} \Bigr)^{\uparrow} \subset Y \cup (e_{i-1}+e_d,i)^{\uparrow} \cup (e_i+e_d,i+1)^{\uparrow}, \quad \mbox{for } 1< i < d.
\end{equation}
Similarly, for $i=1$ we get
\begin{equation}
\label{cquattro}
\mathbb{W}\bigl(\mathbb{A}\times \{1\}\bigr)  \subset Y \cup (e_1+e_d,2)^{\uparrow}.
\end{equation}
Since $\mathbb{A}\times \{d\} \cap \T=\emptyset$ and $\mathbb{A}\times \{d\} \cap \mathbb{V}$ is closed,  for $i=d$ we have 
\begin{equation}
\label{ccinque}
\mathbb{W}\bigl(\mathbb{A}\times \{d\}\bigr) = \Bigl( \bigl( \mathbb{A}\times \{d\} \setminus \T \bigr)^{\uparrow} \cap \mathbb{V} \Bigr)^{\uparrow} = \Bigl( \bigl( \mathbb{A}\times \{d\} \bigr)^{\uparrow} \cap \mathbb{V} \Bigr)^{\uparrow} = \mathbb{A}\times \{d\} \cap \mathbb{V} \subset Y.
\end{equation}
Finally, from (\ref{cdue}) we also find
\begin{equation}
\label{csei}
\mathbb{W}\bigl( \mathbb{A}\times \{1\} \setminus \{(e_1+e_d,1)\}\bigr) \subset Y.
\end{equation}
We claim that 
\begin{equation}
\label{claim}
\mathbb{W}\circ \kappa_{d-1} \circ \mathbb{W} \circ \dots \circ \kappa_2 \circ \mathbb{W} (\mathbb{H}) \subset Y.
\end{equation}
In order to prove this claim, we introduce the sets
\[
X_1 := \mathbb{H}, \qquad X_r := Y \cup \set{(\alpha,i)\in \mathbb{H}}{i>r} \cup \bigl( \mathbb{A}\times \{1\} \setminus \{ (e_1+e_d,1)\} \bigr), \quad \mbox{for } 2\leq r \leq d-1,
\]
and we show that 
\begin{equation}
\label{csette}
\kappa_{r+1} ( \mathbb{W}(X_r)) \subset X_{r+1}, \qquad \mbox{for } 1\leq r \leq d-2.
\end{equation}
We get from (\ref{ctre}), (\ref{cquattro}) and (\ref{ccinque}),
\begin{eqnarray*}
\mathbb{W}(\mathbb{H}) = \bigcup_{1\leq i \leq d} \mathbb{W}(\mathbb{A}\times \{i\}) \subset Y \cup \bigcup_{1\leq i < d} (e_i+e_d,i+1)^{\uparrow} \\
\subset Y \cup \set{(\alpha,i) \in \mathbb{H}}{i\geq 3} \cup (e_1+e_d,2)^{\uparrow} =  Y \cup \set{(\alpha,i) \in \mathbb{H}}{i\geq 3} \cup \{(e_1+e_d,2)\}.
\end{eqnarray*}
Since the first two sets in the last union are invariant with respect to $\kappa_2$ and since $\kappa_2(d)=d$ because $d\geq 3$, 
\[
\kappa_2 (\mathbb{W}(X_1)) = \kappa_2 (\mathbb{W}(\mathbb{H})) \subset Y \cup \set{(\alpha,i) \in \mathbb{H}}{i\geq 3} \cup \{(e_2+e_d,1)\} \subset X_2,
\]
which proves (\ref{csette}) for $r=1$. Let $2\leq r \leq d-2$ and write
\[
X_r = Y \cup \set{(\alpha,i)\in \mathbb{H}}{i>r+1} \cup \mathbb{A} \times \{r+1\} \cup  \bigl( \mathbb{A}\times \{1\} \setminus \{(e_1+e_d,1)\} \bigr).
\]
The first two sets in the above union are $\mathbb{W}$-invariant. Thanks to (\ref{ctre}) and (\ref{csei}), we have
\begin{equation}
\label{cotto}
\begin{split}
\mathbb{W}(X_r) &\subset Y \cup \set{(\alpha,i)\in \mathbb{H}}{i>r+1} \cup  (e_r+e_d,r+1)^{\uparrow}  \\ &= Y \cup \set{(\alpha,i)\in \mathbb{H}}{i>r+1} \cup \bigl( (e_r+e_d,r+1)^{\uparrow} \cap \mathbb{A}\times \{r+1\} \bigr).
\end{split}
\end{equation}
Since
\[
\kappa_{r+1}^{-1} (e_1+e_d,1) = (e_{r+1}+e_d,r+1) < (e_r+e_d,r+1),
\]
there holds
\[
(e_1+e_d,1) \notin \kappa_{r+1} \bigl( (e_r+e_d,r+1)^{\uparrow} \bigr),
\]
and
\[
\kappa_{r+1} \bigl( (e_r+e_d,r+1)^{\uparrow} \cap \mathbb{A}\times \{r+1\} \bigr) \subset \mathbb{A}\times \{1\} \setminus \{(e_1+e_d,1)\}.
\]
Together with the fact that the first two sets on the right-hand side of (\ref{cotto}) are $\kappa_{r+1}$-invariant, we deduce that
\[ 
\kappa_{r+1} (\mathbb{W}(X_r)) \subset Y \cup \set{(\alpha,i)\in \mathbb{H}}{i>r+1} \cup \bigl(\mathbb{A}\times \{1\} \setminus \{(e_1+e_d,1)\} \bigr) = X_{r+1},
\]
which proves (\ref{csette}). Iterating (\ref{csette}), we find
\[
\kappa_{d-1}\circ \mathbb{W} \circ \dots \circ \kappa_2 \circ \mathbb{W}(\mathbb{H}) \subset X_{d-1} = Y \cup \mathbb{A} \times \{d\} \cup \bigl( \mathbb{A}\times \{1\} \setminus \{(e_1+e_d,1)\} \bigr).
\]
By (\ref{ccinque}) and (\ref{csei}), a further application of $\mathbb{W}$ proves our claim (\ref{claim}). 

With the purpose of proving the lemma by induction on the dimension $d$, we notice that, making the dependence on $d$ explicit, the relations $\mathbb{W}^{(d)}$ on the sets $\mathbb{H}^{(d)}$ associated to different dimensions are related by
\begin{eqnarray*}
\mathbb{W}^{(d)} (A) \cap \mathbb{H}^{(d-1)} &=& \mathbb{W}^{(d-1)} ( A \cap \mathbb{H}^{(d-1)}), \\
\mathbb{W}^{(d)}(A) \setminus \mathbb{H}^{(d-1)} &\subset& Y^{(d)} \cap \bigl( \mathbb{A}^{(d)} \times \{d\}\bigr),
\end{eqnarray*}
for every $A\subset Y^{(d)}$. Therefore, if we assume that the thesis (\ref{latesi}) holds in dimension $d-1$, since
\[
\sigma_{h+d-2} =  n_{h+d-2}^{(d)} = n_h^{(d-1)} , \qquad \mbox{for } 1\leq h \leq \binom{d-1}{2},   
\]
(under the natural identification of $\mathfrak{S}(\mathbb{J}^{(d-1)})$ as a subgroup of $\mathfrak{S}(\mathbb{J}^{(d)})$), we find
\[
\mathbb{W}\, \sigma_{\binom{d}{2}-1} \, \mathbb{W} \, \dots \, \sigma_{d-1} \, \mathbb{W}  (Y) \subset Y \cap \bigl( \mathbb{A} \times \{d\} \bigr).
\]
Together with (\ref{idenpot}) and (\ref{claim}), this implies that, since $(\sigma_1,\dots,\sigma_{d-2}) = (\kappa_2,\dots,\kappa_{d-1})$,
\[
\mathbb{W}\, \sigma_{\binom{d}{2}-1} \, \mathbb{W} \, \dots \, \sigma_{d-1} \, \mathbb{W} \, \sigma_{d-2} \, \mathbb{W} \, \dots \, \sigma_1 \, \mathbb{W}\, (\mathbb{H}) \subset  Y \cap \bigl( \mathbb{A} \times \{d\} \bigr).
\]
The latter set coincides with $\mathbb{V} \cap ( \mathbb{A}\times \{d\} )$, so it is mapped into 
\[
\mathbb{V} \cap ( \mathbb{A}\times \{1\} ) = \T \cap   ( \mathbb{A}\times \{1\} )
\]
by the permutation $\sigma_{\binom{d}{2}} = \kappa_d$. Since $\mathbb{W}(\T)=\emptyset$, we conclude that
\[
\mathbb{W} \, \sigma_{\binom{d}{2}} \, \mathbb{W} \, \sigma_{\binom{d}{2}-1} \, \dots \, \sigma_2 \, \mathbb{W} = \emptyset.
\]
This proves the induction step and concludes the proof of the lemma.
\end{proof}

The above lemma can be rephrased as a vanishing property for the composition of suitable sequences of endomorphisms of $\mathscr{H}$:

\begin{lem}
\label{nilp}
Let $\mathscr{M}_0,\mathscr{M}_1,\dots, \mathscr{M}_{\binom{d}{2}}$ be linear endomorphisms of $\mathscr{H}$ such that
\begin{equation}
\label{hyp}
\supp \mathscr{M}_h \subset \mathbb{W}, \quad \mbox{for any } 0\leq h \leq \binom{d}{2},
\end{equation}
and let $\sigma_1,\sigma_2,\dots,\sigma_{\binom{d}{2}}$ be the permutations of $\mathbb{J}$ which are introduced in Lemma \ref{combi}. Then
\begin{equation}
\label{vanish}
\mathscr{M}_0 \, \mathscr{A}_{U_{\sigma_1}} \, \mathscr{M}_1\, \mathscr{A}_{U_{\sigma_2}}\, \mathscr{M}_2 \, \dots\, \mathscr{A}_{U_{\sigma_{\binom{d}{2}}}} \, \mathscr{M}_{\binom{d}{2}} = 0.
\end{equation}
\end{lem}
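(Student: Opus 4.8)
The plan is to deduce this operator identity from the purely combinatorial identity (\ref{latesi}) of Lemma~\ref{combi}, by passing from the endomorphisms to their supports. The ingredients needed are exactly: the inclusion (\ref{contro}), which bounds the support of a product by the composition of the supports of the factors; the identity (\ref{perm}), which says that $\supp \mathscr{A}_{U_{\sigma}}$, viewed as a relation on $\mathbb{H}$, acts on subsets of $\mathbb{H}$ precisely as the permutation $\sigma$ occurring in Lemma~\ref{combi}; the hypothesis (\ref{hyp}); and the trivial fact that an endomorphism of $\mathscr{H}$ with empty support is the zero map.

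First I would iterate (\ref{contro}). Writing the composition in (\ref{vanish}) as $B_1 B_2 \cdots B_{2N+1}$ with $N := \binom{d}{2}$, where $B_{2k+1} := \mathscr{M}_k$ for $0 \leq k \leq N$ and $B_{2k} := \mathscr{A}_{U_{\sigma_k}}$ for $1 \leq k \leq N$, a straightforward induction on the number of factors --- the base case is (\ref{contro}) and the inductive step uses (\ref{contro}) together with the associativity of the composition of relations and its monotonicity in each argument --- yields
\[
\supp\bigl( B_1 B_2 \cdots B_{2N+1} \bigr) \subset (\supp B_{2N+1}) \circ (\supp B_{2N}) \circ \cdots \circ (\supp B_2) \circ (\supp B_1).
\]
Then I would insert the known supports: by (\ref{hyp}) we have $\supp \mathscr{M}_k \subset \mathbb{W}$ for every $k$, and by (\ref{perm}) the relation $\supp \mathscr{A}_{U_{\sigma_k}}$ coincides with the relation denoted $\sigma_k$ in Lemma~\ref{combi}. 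Using once more the monotonicity of the composition of relations, the displayed inclusion becomes
\[
\supp\bigl( \mathscr{M}_0 \, \mathscr{A}_{U_{\sigma_1}} \, \mathscr{M}_1 \cdots \mathscr{A}_{U_{\sigma_N}} \, \mathscr{M}_N \bigr) \subset \mathbb{W} \, \sigma_N \, \mathbb{W} \, \sigma_{N-1} \, \cdots \, \mathbb{W} \, \sigma_1 \, \mathbb{W},
\]
whose right-hand side is exactly the composition of relations on the left of (\ref{latesi}). By Lemma~\ref{combi} that composition is empty, hence the support of the endomorphism in (\ref{vanish}) is empty, and therefore that endomorphism is $0$.

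The one point that needs care --- and which I would spell out --- is the bookkeeping of the order of the factors: since (\ref{contro}) reverses the order (the support of $AB$ is controlled by $(\supp B)\circ(\supp A)$, not by $(\supp A)\circ(\supp B)$), reversing the list $B_{2N+1},\dots,B_1$ puts the factors back into the order $\mathbb{W},\sigma_N,\mathbb{W},\sigma_{N-1},\dots,\mathbb{W},\sigma_1,\mathbb{W}$ read left to right, which is precisely the pattern of (\ref{latesi}); moreover the counts match, namely $N+1$ occurrences of $\mathbb{W}$ alternating with the $N$ permutations $\sigma_N,\dots,\sigma_1$. Beyond this verification there is no real obstacle: all of the substantive content is carried by Lemma~\ref{combi}, and the present lemma is simply its reformulation in terms of linear endomorphisms of $\mathscr{H}$.
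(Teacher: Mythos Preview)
Your proposal is correct and follows essentially the same route as the paper's proof: reduce the vanishing of the operator to the emptiness of its support via the iterated inclusion (\ref{contro}), replace each $\supp\mathscr{M}_h$ by $\mathbb{W}$ using (\ref{hyp}) and each $\supp\mathscr{A}_{U_{\sigma_k}}$ by the action of $\sigma_k$ using (\ref{perm}), and invoke Lemma~\ref{combi}. Your explicit bookkeeping of the order reversal is a welcome clarification, but there is no substantive difference from the paper's argument.
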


\begin{proof}
We can equivalently show that the support of the composition which appears on the left-hand side of (\ref{vanish}) is empty. By using (\ref{contro}),
(\ref{hyp}) implies that the support of this composition is contained in the following composition of relations
\[
\mathbb{W} \circ \supp \mathscr{A}_{U_{\sigma_{\binom{d}{2}}}} \circ \mathbb{W} \circ \cdots \circ \mathbb{W} \circ \supp \mathscr{A}_{U_{\sigma_1}} \circ \mathbb{W}.
\]
By (\ref{perm}), the above relation  is the set
\[
\mathbb{W} \, \sigma_{\binom{d}{2}} \, \mathbb{W} \, \dots \, \mathbb{W} \, \sigma_1 \, \mathbb{W},
\]
which is empty by Lemma \ref{combi}. 
\end{proof} 

\section{The conjugacy equation for 2-jets}
\label{2jetssec}

Consider the finite sequence $(\sigma_1,\dots,\sigma_{\binom{d}{2}})$ of permutations of $\mathbb{J}$ which is introduced in Lemma \ref{combi} and extend it to a $\binom{d}{2}$-periodic infinite sequence $(\sigma_h)_{h\in \N}$. Set 
\[
D:= 2^{d-1},
\]
and define the following two sequences of permutations of $\mathbb{J}$:
\[
\begin{split}
\theta_n &:= \left\{ \begin{array}{ll} \sigma_h & \qquad \mbox{if } n=D^h-1 \mbox{ for } h\in \N, \\ \mathrm{id} & \qquad \mbox{otherwise,} \end{array} \right.
\\
\tau_n &:= \left\{ \begin{array}{ll} \mathrm{id} & \mbox{if } n = 0, \\ \tau_{n-1} \circ \theta_{n-1} & \mbox{if } n\geq 1. \end{array} \right.
\end{split} 
\]
A family of $k$-jets of self-maps of $\C^d$ is said to be bounded if all their coefficients are uniformly bounded.
The aim of this section is to prove the following result:

\begin{prop}
\label{con2jets}
Let $(f_n)$ be a bounded sequence of 2-jets of invertible self-maps of $\C^d$ such that $f_n(0)=0$ and such that the sequence of linear endomorphisms $(Df_n(0))$ is $(\Lambda,M)$-pinched, where $\Lambda < 1 < M$ satisfy
\begin{equation}
\label{conve}
(\Lambda^2 M)^{D^{d(d-1)}} (\Lambda M)^{-\delta} < 1,
\end{equation}
where
\[
\delta:= \frac{D-1}{D^{d(d-1)}-1}.
\]
Then there exist sequences of 2-jets $(h_n)$, $(g_n)$ such that:
\begin{enumerate}
\item $h_n(0)=g_n(0)=0$ and $Dh_n(0)$ is unitary; 
\item $h_{n+1} \circ f_n = g_n \circ h_n$ as 2-jets, for every $n\in \N$.
\item $\tilde{g}_n := U_{\tau_n} \circ g_n \circ U_{\tau_n}^{-1}$ is an upper triangular polynomial automorphism of $\C^d$ of degree 2. 
\item for every $\Theta>\Lambda^2 M$ there is a number $C=C(\Theta)$ such that
\[
\|h_n\|\leq C \,\Theta^{(D^{3\binom{d}{2}}-1)n}, \qquad \|g_n\| \leq 
C \, \Theta^{(D^{3\binom{d}{2}}-1)n},
\]
for every $n\in \N$.
\end{enumerate}
\end{prop}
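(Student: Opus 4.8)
The plan is to set up the conjugacy equation $h_{n+1}\circ f_n = g_n\circ h_n$ separately on the linear and the quadratic parts, and to solve the quadratic equation by an infinite-series formula organized along the "blocks" $[D^h-1, D^{h+1}-1)$ on which the permutation $\theta_n$ is constant. First I would dispose of the linear part: by the $QR$-decomposition one writes $Df_n(0) = H_{n+1}^{-1} L_n H_n$ with each $H_n$ unitary (fix $H_0=\mathrm{id}$) and each $L_n$ upper triangular, and since $(Df_n(0))$ is $(\Lambda,M)$-pinched, so is $(L_n)$. Conjugating the whole problem by the unitaries $H_n$, I may assume from the outset that $Df_n(0)=L_n$ is upper triangular and that $Dh_n(0)$ is to be chosen among unitary permutation matrices; the natural choice, dictated by item (iii), is $Dh_n(0)=U_{\tau_n}^{-1}$ composed with $H_n$, i.e. the linear part of $h_n$ absorbs exactly the block permutations $\tau_n$. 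With this normalization the linear part of $g_n$ becomes $U_{\tau_{n+1}} L_n U_{\tau_n}^{-1} = U_{\theta_n}\,(U_{\tau_n} L_n U_{\tau_n}^{-1})$, which is $U_{\theta_n}$ times an upper triangular matrix; since $\theta_n=\mathrm{id}$ except at $n=D^h-1$, this is already of the desired "special-triangular up to a sparse permutation" form, giving items (i) and (the linear half of) (iii).

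Next, writing $u_n\in\mathscr H$ for the quadratic part of $h_n$ and $w_n\in\mathscr H$ for the (known, recursively defined) quadratic "defect" coming from $f_n$ and the lower-order data, the quadratic conjugacy equation becomes an affine recursion $u_{n+1} = \mathscr A_{L_n}(U_{\theta_n}\!\cdot u_n) - w_n$ (with $\mathscr A_{L_n}$ the degree-2 conjugacy operator and $U_{\theta_n}\cdot$ the induced permutation action on $\mathscr H$), where $g_n$ enters only through its quadratic part, which I am free to choose. Following the informal discussion, I set the quadratic part of $\tilde g_n$ to be the $\mathscr T$-component and solve for $u_n$ by the series
\[
u_n \;=\; \sum_{k=0}^{\infty} \widetilde{\mathscr A}_{n}^{-1}\widetilde{\mathscr A}_{n+1}^{-1}\cdots \widetilde{\mathscr A}_{n+k-1}^{-1}\,\mathscr Q\, w_{n+k},
\]
where $\widetilde{\mathscr A}_m := \mathscr A_{L_m}\circ (\text{action of }\theta_m)$ and $\mathscr Q$ is the projector of Section 1; the $\mathscr T$-part of $g_n$ is then fixed by $\ker\mathscr Q$-components so that (ii) and (iii) hold. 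The main work is to prove this series converges with the claimed growth rate. Here I group the factors into blocks: over a full block $[D^h-1,D^{h+1}-1)$ the permutations $\theta_m$ are trivial except at the right endpoint, so the product of $\widetilde{\mathscr A}_m^{-1}$ over the block equals $U_{\sigma_{h}}\!\cdot\,$ followed by $\mathscr A_{L_{D^{h+1}-1,D^h-1}}^{-1}$ restricted appropriately; by Lemma \ref{decomp} this inverse conjugacy operator splits as $\mathscr M_h^0 + \mathscr M_h^1$ with $\|\mathscr M_h^0\| = O\big((D^{h+1}-D^h)^{3(d-1)}\Lambda^{-(D^{h+1}-D^h)}\big)$ — note $\Lambda^{-1}>1$, these are the "good" small terms after inversion — and $\|\mathscr M_h^1\| = O\big((\Lambda^2M)^{-(D^{h+1}-D^h)}\big)$ with $\supp\mathscr M_h^1\subset\mathbb W$. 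Wait — for the inverse, rescaling the pinching constants, $\mathscr A_{L_{k,h}}^{-1}$ has norm $O((\Lambda^2 M)^{-(k-h)})$ overall and an $\mathbb W^c$-part of size $O(\mathrm{poly}\cdot\Lambda^{-(k-h)})$; the point is that $(\Lambda^2M)^{-1}<1$ would be false, so it is precisely the $\mathbb W$-supported part $\mathscr M_h^1$ that threatens divergence, exactly as in the introduction.

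The crux — and the step I expect to be the main obstacle — is then a telescoping/bookkeeping argument showing that in any product $\widetilde{\mathscr A}_{n}^{-1}\cdots\widetilde{\mathscr A}_{n+k-1}^{-1}$ spanning $k$ consecutive blocks, one cannot have more than $\binom{d}{2}$ consecutive "bad" choices $\mathscr M^1$ in a row: by Lemma \ref{nilp}, any product $\mathscr M_0\,\mathscr A_{U_{\sigma_1}}\,\mathscr M_1\cdots\mathscr A_{U_{\sigma_{\binom d2}}}\,\mathscr M_{\binom d2}$ with all factors $\mathbb W$-supported vanishes, because the sequence $(\sigma_h)$ was chosen $\binom d2$-periodically from the combinatorial Lemma \ref{combi}. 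Expanding each block factor as $\mathscr M_h^0+\mathscr M_h^1$, a term in the resulting $2^k$-fold expansion survives only if every maximal run of consecutive $\mathscr M^1$'s has length $<\binom d2$; hence at least a fixed positive fraction (roughly $1/\binom d2$) of the blocks contribute a "good" factor $\mathscr M^0$. Counting block-lengths: block $h$ has length $D^{h+1}-D^h = D^h(D-1)$, so a good factor at block $h$ contributes a gain $\Lambda^{-D^h(D-1)} \cdot(\text{poly})$ against a loss $(\Lambda^2M)^{-D^h(D-1)}$ from the bad ones; the worst survivor has its good factors as early as possible, and summing a geometric-type series in $D^h$ one finds the tail of the series for $u_n$ is $O(\Theta^{(D^{3\binom d2}-1)n})$ for any $\Theta>\Lambda^2M$ precisely when the hypothesis $(\Lambda^2M)^{D^{d(d-1)}}(\Lambda M)^{-\delta}<1$ holds (here $d(d-1)=2\binom d2$ and $\delta=(D-1)/(D^{d(d-1)}-1)$ is exactly the ratio that makes the geometric book-keeping close). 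The same estimate bounds $g_n$ since $g_n$ is built from $L_n$, $u_n$, $u_{n+1}$ and the bounded data $f_n$. Assembling: items (i)–(iii) are immediate from the construction, and (iv) is the convergence estimate just described. The delicate points are (a) verifying the recursion for $w_n$ and the claim that $\mathscr Q w_n$ (not $w_n$ itself) is what must be summed — i.e. that the $\mathscr T$-components can always be killed by choosing $g_n$'s quadratic part, using $\mathscr Q\mathscr A_{L_n} = \mathscr Q\mathscr A_{L_n}\mathscr Q$ from Section 1 — and (b) the precise exponent arithmetic reconciling the block lengths $D^h(D-1)$, the period $\binom d2$, and the exponents $D^{3\binom d2}-1$ and $D^{d(d-1)}$ appearing in the statement.
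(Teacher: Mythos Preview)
Your plan is essentially the paper's own proof: QR-reduction to make $L_n:=D\tilde f_n(0)$ upper triangular, the Ansatz $u_n\in\mathscr Q\mathscr H$ so that the $\mathscr T$-component can be absorbed into the quadratic part of $\tilde g_n$ (using $\mathscr Q\mathscr A_{L_n}=\mathscr Q\mathscr A_{L_n}\mathscr Q$), the explicit series for $u_n$, and then the block-by-block expansion combining Lemma~\ref{decomp} with the nilpotency Lemma~\ref{nilp} and the geometric exponent count that produces~$\delta$.

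One organizational point is worth cleaning up. You write the recursion forward, $u_{n+1}=\mathscr A_{L_n}(U_{\theta_n}\!\cdot u_n)-w_n$, and are then forced to sum powers of $\widetilde{\mathscr A}^{-1}$; this leads to your mid-paragraph ``Wait\dots'' about inverses. The paper instead writes the conjugacy equation backward (equation~(\ref{conju2})),
\[
u_n \;=\; \mathscr Q\bigl(\mathscr A_{U_{\theta_n}L_n}\,u_{n+1}+L_n^{-1}\!\circ w_n\bigr),
\]
so that the series for $u_n$ involves products $(\mathscr Q\,\mathscr A_{U_{\theta_n}L_n}\,\mathscr Q)\cdots(\mathscr Q\,\mathscr A_{U_{\theta_{m-1}}L_{m-1}}\,\mathscr Q)$ of the \emph{forward} conjugacy operators. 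Lemma~\ref{decomp} is stated precisely for $\mathscr Q\,\mathscr A_{L_{n,0}}\,\mathscr Q$, so no inversion or ``rescaling of pinching constants'' is needed: the $\mathscr M^0$ piece has norm $O(n^{3(d-1)}\Lambda^n)$ and the $\mathbb W$-supported piece $\mathscr M^1$ has norm $O((\Lambda^2 M)^n)$, exactly as written. Your exponent heuristics (worst survivor has its $\mathscr M^0$-factors at the earliest positions within each period of length $2\binom d2$) are then made precise in the paper as Lemma~\ref{treniS}(ii), and the final bound $\|\mathscr S_n\|\le C\,\Theta^{(D^{3\binom d2}-1)n}$ is Proposition~\ref{operatore}.
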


We set 
\[
\tilde{f}_n := U_{\tau_n} \circ f_n \circ U_{\tau_n}^{-1}.
\]
Up to a non-autonomous unitary conjugacy, we may assume that the linear automorphism 
\[
L_n:= D\tilde{f_n}(0) 
\]
is upper triangular for every $n\in \N$. Indeed, a non-autonomous conjugacy
\[
V_{n+1} \circ L_n = D\tilde{f}_n(0) \circ V_n
\]
between $(D\tilde{f_n}(0))$ and a sequence of upper triangular linear automorphisms $L_n$ can be defined recursively by setting $V_0:= I$ and by defining $V_{n+1}$ and $L_n$ to be the unitary and the upper triangular part in the $QR$-decomposition of $D\tilde{f}_n(0) \circ V_n$.

If we set
\[
\tilde{h}_n := U_{\tau_n} \circ h_n \circ U_{\tau_n}^{-1},
\]
the conjugacy equation in statement (i) can be rewritten as
\begin{equation}
\label{conju}
\tilde{h}_n = \tilde{g}_n^{-1} \circ U_{\theta_n}^{-1} \circ \tilde{h}_{n+1} \circ U_{\theta_n} \circ \tilde{f}_n.
\end{equation}
At the level of 1-jets, this equation is solved by choosing
\[
D \tilde{h}_n (0) := I, \qquad  D \tilde{g}_n(0) := L_n.
\]
If $\tilde{g}_n$ is an upper triangular polynomial automorphism of $\C^d$ of degree 2, so is its inverse, and we can set
\[
\tilde{g}_n^{-1} = L_n^{-1} + v_n,
\]
where $v_n\in \mathscr{T}=\mathscr{T}^2$ is to be found.
If we also set
\[
\tilde{h}_n = I + u_n,  
\]
with $u_n \in \mathscr{H}= \mathscr{H}^2$, the equation (\ref{conju}) at the level of 2-jets can be rewritten as
\begin{equation}
\label{conju2}
u_n = \mathscr{A}_{U_{\theta_n} L_n} u_{n+1} + v_n \circ L_n + L_n^{-1} \circ w_n,
\end{equation}
where $w_n$ denotes the 2-homogeneous part of $\tilde{f}_n$ (which is a given bounded sequence in $\mathscr{H}$). We recall that $\mathscr{Q}$ is the projector onto the space spanned the $z^{\alpha} e_i$ for $(\alpha,i)\in \T^c$, along $\mathscr{T}$. We shall find a solution $(u_n)$, $(v_n)$ of (\ref{conju2}) with
\[
u_n \in \mathscr{Q} \mathscr{H}.
\]
With such an Ansatz, the equation (\ref{conju2}) is equivalent to the system
\begin{eqnarray}
\label{conju3a}
u_n &=& \mathscr{Q} \left( \mathscr{A}_{U_{\theta_n} L_n} u_{n+1} + L_n^{-1} \circ w_n \right), \\
\label{conju3b}
v_n \circ L_n &=& (\mathscr{Q}-I) \left( \mathscr{A}_{U_{\theta_n} L_n} u_{n+1} + L_n^{-1} \circ w_n \right),
\end{eqnarray}
where we have used the fact that $v_n \circ L_n$ belongs to $\mathscr{T}=\ker \mathscr{Q}$. In order to prove Proposition \ref{con2jets}, it is enough to find a sequence $(u_n)$ in the image of $\mathscr{Q}$ which solves (\ref{conju3a}) and has the estimate
\[
\| u_n \| \leq C \,\Theta^{(D^{3\binom{d}{2}}-1)n}, \qquad \forall n\in \N,
\]
for every $\Theta>\Lambda^2 M$ and for some
for some $C=C(\Theta)$. Indeed, in this case $v_n$ can be derived immediately from the equation (\ref{conju3b}): It belongs to $\mathscr{T}$ because $\mathscr{T}$ is invariant with respect to the composition by the upper triangular linear mappings $L_n$ and it satisfies an analogous growth estimate. It is immediate to check that a solution of (\ref{conju3a}) can be defined explicitly by setting
\[
\begin{split}
u_n &:= \sum_{m\geq n} (\mathscr{Q} \mathscr{A}_{U_{\theta_n} L_n}) (\mathscr{Q} \mathscr{A}_{U_{\theta_{n+1}} L_{n+1}}) \dots ( \mathscr{Q} \mathscr{A}_{U_{\theta_{m-1}} L_{m-1}})  \mathscr{Q} L_m^{-1} w_m \\ &=
\sum_{m\geq n} (\mathscr{Q} \mathscr{A}_{U_{\theta_n} L_n} \mathscr{Q}) (\mathscr{Q} \mathscr{A}_{U_{\theta_{n+1}} L_{n+1}} \mathscr{Q}) \dots ( \mathscr{Q} \mathscr{A}_{U_{\theta_{m-1}} L_{m-1}} \mathscr{Q} ) L_m^{-1} w_m, 
\end{split} 
\]
provided that the above series converges in $\mathscr{H}$. Therefore,
Proposition \ref{con2jets} is implied by the following:

\begin{prop}
\label{operatore}
Let $(L_n)$ be a $(\Lambda,M)$-pinched sequence of upper triangular linear automorphisms of $\C^d$, where $\Lambda$ and $M$ satisfy (\ref{conve}). Then for every $n\in \N$ the series
\[
\mathscr{S}_n := \sum_{m\geq n} (\mathscr{Q} \mathscr{A}_{U_{\theta_n} L_n} \mathscr{Q}) (\mathscr{Q} \mathscr{A}_{U_{\theta_{n+1}} L_{n+1}} \mathscr{Q}) \dots ( \mathscr{Q} \mathscr{A}_{U_{\theta_{m-1}} L_{m-1}} \mathscr{Q} )
\]
converges absolutely in $\mathrm{L}(\mathscr{H})$. Moreover, for every $\Theta>\Lambda^2 M$ there exists $C=C(\Theta)$ such that
\[
\|\mathscr{S}_n\| \leq C\, \Theta^{(D^{3\binom{d}{2}}-1)n},
\]
for every $n\in \N$.
\end{prop}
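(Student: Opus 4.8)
The plan is to estimate the norm of a partial product of the operators $\mathscr{Q}\mathscr{A}_{U_{\theta_m} L_m}\mathscr{Q}$ over a block of consecutive indices of a well-chosen length, and then sum geometrically. The starting point is the decomposition of Lemma~\ref{decomp}: for each $m$, writing $\mathscr{Q}\mathscr{A}_{L_{m+p,m}}\mathscr{Q} = \mathscr{M}^0_{m,p} + \mathscr{M}^1_{m,p}$ with $\supp\mathscr{M}^1_{m,p}\subset\mathbb{W}$ and $\supp\mathscr{M}^0_{m,p}\subset(\mathbb{T}^c\times\mathbb{T}^c)\setminus\mathbb{W}$, one has $\|\mathscr{M}^0_{m,p}\|\le C_0\,p^{3(d-1)}\Lambda^p$ (the small, genuinely contracting part) and $\|\mathscr{M}^1_{m,p}\|\le C_1(\Lambda^2M)^p$ (the large part, but supported in $\mathbb{W}$). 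Crucially, the permutation factors $\mathscr{A}_{U_{\theta_n}}$ have been inserted precisely at the places $n=D^h-1$ where the finite sequence $\sigma_1,\dots,\sigma_{\binom d2}$ of Lemma~\ref{combi} appears, so that over an interval of indices containing $D^{3\binom d2}$ consecutive powers-of-$D$ milestones one encounters the full word $\mathscr{A}_{U_{\sigma_1}},\dots,\mathscr{A}_{U_{\sigma_{\binom d2}}}$ enough times. Then Lemma~\ref{nilp} forces any product of $\binom d2+1$ of the $\mathscr{M}^1$-parts, interleaved with those permutations, to vanish.

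Concretely, I would group the factors of the partial product $(\mathscr{Q}\mathscr{A}_{U_{\theta_n}L_n}\mathscr{Q})\cdots(\mathscr{Q}\mathscr{A}_{U_{\theta_{m-1}}L_{m-1}}\mathscr{Q})$ into consecutive ``superblocks'' whose endpoints are the milestones $D^h-1$, collapsing the contribution between two consecutive milestones (a product of ordinary $\mathscr{Q}\mathscr{A}_{L_j}\mathscr{Q}$ with no permutation) into a single operator, to which Lemma~\ref{decomp} applies with $p=D^{h+1}-D^h$. Expanding each such collapsed operator as $\mathscr{M}^0+\mathscr{M}^1$ and multiplying out, a generic term in the expansion is an alternating word in $\mathscr{M}^0$'s, $\mathscr{M}^1$'s and the permutation operators $\mathscr{A}_{U_{\sigma_\bullet}}$. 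By Lemma~\ref{nilp}, any such word containing $\binom d2+1$ consecutive $\mathscr{M}^1$-blocks separated by the permutations $\sigma_1,\dots,\sigma_{\binom d2}$ is zero; hence in any nonzero word, among any $\binom d2+1$ consecutive milestone-blocks at least one contributes its $\mathscr{M}^0$-factor. Since the $\sigma$-sequence is $\binom d2$-periodic and the milestones are $D^h-1$, over a span of $3\binom d2$ milestone-blocks the cancellation is guaranteed to bite; counting, across $\ell$ milestone-blocks a surviving word must use at least $\lfloor \ell/(3\binom d2)\rfloor$ many $\mathscr{M}^0$-factors. Each $\mathscr{M}^0$-factor at block $h$ contributes a gain of $\Lambda^{D^{h+1}-D^h}$ (times a polynomial factor), while each $\mathscr{M}^1$-factor at block $h$ costs at most $(\Lambda^2M)^{D^{h+1}-D^h}$; the permutation operators and the projectors $\mathscr{Q}$ have norm $1$. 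Multiplying the per-block bounds and using that the number of words is at most $2^{\ell}$ times a binomial factor, the total contribution of the first $\ell$ milestone-blocks is bounded by a constant times $(\Lambda^2M)^{D^{\ell}}\big((\Lambda M)^{-1}\big)^{c\,D^{\ell/(3\binom d2)}}$ up to polynomial corrections, which under hypothesis~(\ref{conve}) — rewritten as $(\Lambda^2M)^{D^{d(d-1)}}(\Lambda M)^{-\delta}<1$ with $d(d-1)=2\binom d2$ — decays and is summable.

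Finally, I would translate this per-block estimate back to an estimate in $n$. If $n$ lies between milestones $D^{h}-1$ and $D^{h+1}-1$, the tail $\mathscr{S}_n$ from index $n$ onward is dominated by the contribution of the milestone-blocks beyond $h$, and the worst term grows like $\Theta^{(D^{3\binom d2}-1)n}$ for any $\Theta>\Lambda^2M$: the exponent $D^{3\binom d2}-1$ is exactly the ratio governing how far the ``bad'' $(\Lambda^2M)$-growth can outrun the ``good'' $\Lambda$-decay before the nilpotency-enforced $\mathscr{M}^0$ kicks in, and the passage from $\Lambda^2M$ to an arbitrary $\Theta>\Lambda^2M$ absorbs the polynomial factors $p^{3(d-1)}$ and the combinatorial word-count $2^{\ell}$. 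The main obstacle — and the crux of the whole argument — is the bookkeeping in the middle paragraph: organizing the expansion of the partial product so that Lemma~\ref{nilp} can be applied uniformly (one must check that the permutations flanking each $\mathscr{M}^1$-block really are $\sigma_1,\dots,\sigma_{\binom d2}$ in the right cyclic order, using the $\binom d2$-periodicity of $(\sigma_h)$ and the placement $\theta_{D^h-1}=\sigma_h$), and then extracting from ``at least one $\mathscr{M}^0$ per $3\binom d2$ blocks'' a clean geometric bound whose rate matches (\ref{conve}). Everything else — the norm bounds on individual factors, the geometric summation, and the conversion to the stated $\Theta$-estimate — is routine given Lemmas~\ref{decomp} and~\ref{nilp}.
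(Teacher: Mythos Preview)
Your strategy is the paper's: collapse the product between consecutive milestones $D^h$, apply Lemma~\ref{decomp} to each collapsed block, expand as a sum over words in $\{\mathscr{M}^0,\mathscr{M}^1\}$, kill the words that are all-$\mathscr{M}^1$ on an aligned window via Lemma~\ref{nilp}, and estimate the survivors. The paper packages this as an auxiliary Lemma~\ref{treniS} giving bounds for the block norms $S(m,n)$ at the special endpoints $D^{p\binom{d}{2}}$, $D^{(p+2q)\binom{d}{2}}$, and then combines these with submultiplicativity; your sketch is the same argument without that intermediate packaging.

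One point is genuinely wrong, though, and as written it prevents the argument from closing. You bound the contribution of $\ell$ milestone-blocks by
\[
(\Lambda^2 M)^{D^\ell}\,(\Lambda M)^{-c\,D^{\ell/(3\binom{d}{2})}}.
\]
This diverges regardless of~(\ref{conve}): the first exponent is of order $D^\ell$ while the second is only of order $(D^\ell)^{1/(3\binom{d}{2})}$, so the loss always dominates the gain. The mistake is to count only the \emph{number} of forced $\mathscr{M}^0$-factors (roughly $\ell/N$ with $N$ your window size) and treat each as contributing a unit of gain. But the block-lengths $(D-1)D^h$ grow geometrically, so what matters is the worst-case \emph{total length} of the forced $\mathscr{M}^0$-blocks. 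Placing the single mandatory $\mathscr{M}^0$ at the leftmost (shortest) slot of each aligned window of size $N$ gives
\[
\sum_{r\ge 0}(D-1)D^{rN}\;=\;\frac{D-1}{D^N-1}\,D^\ell\;=\;\delta_N\,D^\ell,
\]
a fixed positive \emph{fraction} of the full length $D^\ell$. The correct bound is therefore of the form $\bigl(\Lambda^2 M\,(\Lambda M)^{-\delta_N}\bigr)^{D^\ell}$ up to subexponential factors, and hypothesis~(\ref{conve}) is precisely what makes the base smaller than $1$ when $N=2\binom{d}{2}$. Two smaller corrections: (a) $N=2\binom{d}{2}$ already suffices for the alignment, since any $2\binom{d}{2}$ consecutive milestone-blocks contain a full window $[r\binom{d}{2},(r+1)\binom{d}{2}]$ on which the interleaving permutations are $\sigma_1,\dots,\sigma_{\binom{d}{2}}$ in the order Lemma~\ref{nilp} requires; (b) the exponent $3\binom{d}{2}$ in the final statement does not come from the window size but from a separate factor $D^{\binom{d}{2}}$ incurred when rounding an arbitrary $n$ up to the nearest special milestone $D^{p\binom{d}{2}}$ before summing.
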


The remaining part of this section is devoted to the proof Proposition \ref{operatore}.
If $m>n$ are natural numbers, we set
\[
S(m,n) := \bigl\| (\mathscr{Q} \mathscr{A}_{U_{\theta_n} L_n} \mathscr{Q}) (\mathscr{Q} \mathscr{A}_{U_{\theta_{n+1}} L_{n+1}} \mathscr{Q}) \dots ( \mathscr{Q} \mathscr{A}_{U_{\theta_{m-1}} L_{m-1}} \mathscr{Q} ) \bigr\|,
\]
and $S(n,n):=1$. Since the norm of a composition is not larger than the products of the norms of the factors, we have
\begin{equation}
\label{sub}
S(\ell,n) \leq S(\ell,m) S(m,n), \qquad \mbox{for every } 0\leq n \leq m \leq \ell.
\end{equation}

\begin{lem}
\label{treniS}
Assume that the sequence $(L_n)$ of upper triangular automorphisms of $\C^d$ is $(\Lambda,M)$-pinched.
\begin{enumerate}
\item For every $\Theta>\Lambda^2 M$ there is a positive number $C_1=C_1(\Theta)$ such that
\[
S(m,n) \leq C_1 \, \Theta^{m-n}, 
\]
for every $m\geq n \geq 0$.
\item For every $\Theta>\Lambda^2 M$ there is a positive number $C_2=C_2(\Theta)$ such that
\[
S\left( D^{(p+2q)\binom{d}{2}}, D^{p\binom{d}{2}} \right) \leq C_2 \, \bigl( \Theta \Lambda^{-\delta} M^{- \delta} \bigr)^ {D^{p \binom{d}{2}} ( D^{2q \binom{d}{2} } - 1 )},
\]
for every $p,q\in \N$, where $\delta$ is the number introduced in Lemma \ref{operatore}.
\end{enumerate}
\end{lem}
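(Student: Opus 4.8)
The plan is to prove the two estimates in Lemma~\ref{treniS} separately, building on Lemma~\ref{decomp} (the decomposition $\mathscr{Q}\mathscr{A}_{L_{n,0}}\mathscr{Q} = \mathscr{M}^0 + \mathscr{M}^1$) and Lemma~\ref{nilp} (the combinatorial nilpotency). For part~(i), note first that $\mathscr{Q}\mathscr{A}_{U_{\theta_n}L_n}\mathscr{Q} = \mathscr{Q}\mathscr{A}_{L_n}\mathscr{A}_{U_{\theta_n}}\mathscr{Q}$ and that $\|\mathscr{A}_{L_n}\|\leq \|L_n^{-1}\|\,\|L_n\|^2$, which by the pinching bound with $k=h+1$ is uniformly bounded; since $\mathscr{A}_{U_{\theta_n}}$ is a permutation automorphism it has norm $1$, and $\mathscr{Q}$ has norm $1$ in the orthonormal basis. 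So each factor has norm bounded by a fixed constant $c_0$. This already gives $S(m,n)\leq c_0^{m-n}$, but that is not good enough if $c_0>\Theta$. To get the sharp exponential rate $\Theta^{m-n}$ for any $\Theta>\Lambda^2 M$, I would instead group the factors into blocks of length $\binom{d}{2}$ — using the $\binom{d}{2}$-periodicity of $(\sigma_h)$ — and estimate the norm of one block. Writing $\mathscr{A}_{U_{\theta_n}L_n}\cdots$ across such a block and substituting $\mathscr{A}_{L_{m,n}} = \mathscr{Q}\mathscr{A}_{L_{m,n}}\mathscr{Q}$ decomposed via Lemma~\ref{decomp} as $\mathscr{M}^0+\mathscr{M}^1$, one expands the product of blocks into $2^{(\text{number of blocks})}$ terms. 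The all-$\mathscr{M}^1$ term would have norm $(\Lambda^2M)^{\text{total length}}$ up to constants; every other term contains at least one $\mathscr{M}^0$ factor, contributing an extra $\Lambda^{(\text{block length})}$-type gain, so crudely each such term is $\leq$ (geometric in block length) and there are boundedly many, yielding $S(m,n)\leq C_1\,\Theta^{m-n}$ once $\Theta$ exceeds $\Lambda^2M$ (absorbing polynomial factors $n^{3(d-1)}$ into the exponential, which is exactly why any $\Theta>\Lambda^2M$, rather than $\Theta=\Lambda^2M$, is needed).

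For part~(ii) the key extra input is the nilpotency Lemma~\ref{nilp}, which forces the purely-$\mathscr{M}^1$ contribution to \emph{vanish} across a suitably arranged window of $\binom{d}{2}$ blocks whose permutations run through $\sigma_1,\dots,\sigma_{\binom{d}{2}}$. The crucial book-keeping is that the permutation inserted after the $h$-th block is $\sigma_h$ precisely because $\theta_n = \sigma_h$ when $n = D^h-1$ — so between consecutive powers $D^{h-1}$ and $D^h$ the conjugacy operator $\mathscr{Q}\mathscr{A}_{L_{D^h-1,D^{h-1}}}\mathscr{Q}$ sits, followed by the jump $\mathscr{A}_{U_{\sigma_h}}$, and after $\binom{d}{2}$ such steps one has built exactly the alternating product $\mathscr{M}^1\,\mathscr{A}_{U_{\sigma_1}}\,\mathscr{M}^1\,\cdots\,\mathscr{A}_{U_{\sigma_{\binom{d}{2}}}}\,\mathscr{M}^1$ of Lemma~\ref{nilp} (the nilpotency holds for \emph{any} endomorphisms supported in $\mathbb{W}$, so it applies to the $\mathscr{M}^1$ pieces regardless of their analytic size). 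Hence, over the window from $D^{p\binom{d}{2}}$ to $D^{(p+\binom{d}{2})\binom{d}{2}}$ — wait, more precisely over a window of $\binom{d}{2}$ consecutive ``$D$-blocks'' — every surviving term in the expansion of $S$ contains at least one $\mathscr{M}^0$ factor. That $\mathscr{M}^0$ factor replaces a bound $(\Lambda^2M)^{D^{\ell}-D^{\ell-1}}$ by the much smaller $n^{3(d-1)}\Lambda^{D^\ell - D^{\ell-1}}$, i.e. it gains a factor of roughly $M^{-(D^\ell - D^{\ell-1})} = M^{-D^{\ell-1}(D-1)}$, while the other factors are estimated by part~(i). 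Choosing the bad factor to be the one over the \emph{longest} available $D$-block in the window gives the largest gain; tracking the geometric sums of $D$-powers, this produces a gain of the form $(\Lambda M)^{-\delta\cdot(\text{something})}$ with $\delta = (D-1)/(D^{d(d-1)}-1)$, which is exactly the statement of part~(ii) after identifying $D^{d(d-1)} = D^{2\binom{d}{2}}$ and iterating over $q$ such windows.

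Concretely, for~(ii) I would iterate the following single-window estimate: $S(D^{(p+2\binom{d}{2})\binom{d}{2}}, D^{p\binom{d}{2}})$ — rather, $S$ over one window of $2\binom{d}{2}$ blocks — is bounded by $C\,(\Theta\Lambda^{-\delta}M^{-\delta})^{D^{p\binom{d}{2}}(D^{2\binom{d}{2}}-1)}$, then use the submultiplicativity~(\ref{sub}) to multiply $q$ consecutive windows together, the exponents summing to the telescoping geometric series $\sum_{j=0}^{q-1} D^{(p+2j)\binom{d}{2}}(D^{2\binom{d}{2}}-1) = D^{p\binom{d}{2}}(D^{2q\binom{d}{2}}-1)$. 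The main obstacle, and the step requiring the most care, is organizing the expansion of the product of $\mathscr{Q}\mathscr{A}_{U_{\theta_n}L_n}\mathscr{Q}$'s over a window into the $\mathscr{M}^0/\mathscr{M}^1$ dichotomy in a way that (a) correctly matches the inserted permutations $\mathscr{A}_{U_{\sigma_h}}$ with the slots of Lemma~\ref{nilp}, so that the all-$\mathscr{M}^1$ term genuinely vanishes; and (b) bounds the number and size of the surviving mixed terms by placing the forced $\mathscr{M}^0$ on a block whose length is comparable to the whole window, so that the $M^{-\delta}$-gain is of the right order rather than being washed out by the $(\Lambda^2M)$-growth of the remaining factors. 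Polynomial factors $n^{3(d-1)}$ are harmless because they are absorbed by replacing $\Lambda^2M$ with any larger $\Theta$, which is why the hypothesis~(\ref{conve}) is stated with a strict inequality and the conclusion quantifies over all $\Theta>\Lambda^2M$.
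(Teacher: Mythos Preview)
Your sketch for part~(i) misidentifies the block structure: the $\binom{d}{2}$-periodicity lives in the index $h$ of $\sigma_h$, but the factors of $S(m,n)$ are indexed by $n$, with $\theta_n$ nontrivial only at $n=D^h-1$. Once you use that $\mathscr{Q}$ commutes with each $\mathscr{A}_{L_j}$, the product collapses into at most $O(\log_D m)$ factors of the form $\bigl(\mathscr{Q}\mathscr{A}_{L_{D^{h+1},D^h}}\mathscr{Q}\bigr)\mathscr{A}_{U_{\sigma_h}}$, each of norm at most $c\,(\Lambda^2M)^{D^{h+1}-D^h}$ by the crude bound~(\ref{norma}). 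Multiplying and absorbing the logarithmically many constants into any $\Theta>\Lambda^2M$ gives~(i) directly. No $\mathscr{M}^0/\mathscr{M}^1$ decomposition is needed for this part, and the ``blocks of length $\binom{d}{2}$'' you describe do not exist.

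For part~(ii) your overall plan (prove the $q=1$ window estimate and iterate by submultiplicativity~(\ref{sub}); the prefactor $C^q$ is harmless against $D^{2q\binom{d}{2}}$) is viable and essentially equivalent to the paper's direct treatment of all $2q\binom{d}{2}$ blocks at once. But the central step contains a genuine error: you write that one should place the forced $\mathscr{M}^0$ on the \emph{longest} block to obtain the \emph{largest} gain. You do not get to choose. The expansion into $\sum_{\epsilon}$ produces \emph{all} terms with at least one $\epsilon(j)=0$ in the window, and to bound $S$ from above you must control the \emph{worst} surviving term, i.e.\ the one with the \emph{smallest} gain, which places its single forced $\mathscr{M}^0$ on the \emph{shortest} block, of length $\ell(0)=D^{p\binom{d}{2}}(D-1)$. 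It is precisely this minimisation that yields the small ratio $\delta=(D-1)/(D^{2\binom{d}{2}}-1)$; placing the zero on the longest block would give an exponent close to $1-1/D$, incompatible with the $\delta$ you then quote. The paper makes this explicit by rewriting the bound as $(\Lambda^2M)^{\text{total}}(\Lambda M)^{-\min_{\epsilon\in E}\sum_j(1-\epsilon(j))\ell(j)}$ and computing the minimum using that $\ell$ is increasing.
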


\begin{proof}
(i) Let $n\leq m$ be natural numbers and let $h\leq k$ be the natural numbers such that
\begin{equation}
\label{dove}
D^{h-1} -1 < n \leq D^h -1 , \qquad D^{k-1} - 1 < m \leq D^k - 1.
\end{equation}
Since $\mathscr{Q}$ commutes with the operators $\mathscr{A}_{L_j}$, the composition whose norm defines $S(m,n)$ can be rewritten as
\begin{eqnarray*}
& (\mathscr{Q} \mathscr{A}_{U_{\theta_n} L_n} \mathscr{Q}) (\mathscr{Q} \mathscr{A}_{U_{\theta_{n+1}} L_{n+1}} \mathscr{Q}) \dots ( \mathscr{Q} \mathscr{A}_{U_{\theta_{m-1}} L_{m-1}} \mathscr{Q} ) & \\ & 
= ( \mathscr{Q} \mathscr{A}_{L_{D^h,n}} \mathscr{Q}) \mathscr{A}_{U_{\sigma_h}}
( \mathscr{Q} \mathscr{A}_{L_{D^{h+1},D^h}} \mathscr{Q}) \mathscr{A}_{U_{\sigma_{h+1}}} \dots (\mathscr{Q} \mathscr{A}_{L_{D^{k-1},D^{k-2}}} \mathscr{Q}) \mathscr{A}_{U_{\sigma_{k-1}}} (\mathscr{Q} \mathscr{A}_{L_{m,D^{k-1}}} \mathscr{Q}), &
\end{eqnarray*}
where we have also used the definition of $(\theta_j)$. If $C$ is such that (\ref{est}) holds, then (\ref{norma}) and the above expression imply the estimate
\[
S(m,n) \leq c \,(\lambda^2 M)^{D^h-n} c \,(\Lambda^2 M)^{D^{h+1}-D_h} \dots  c \,(\Lambda^ M)^{m-D^k} = c^{k-h+1} \, (\Lambda^2 M)^{m-n},
\]
where $c = C^3 \|\mathscr{Q}\|^2$. By (\ref{dove}), we have
\[
k-h+1 \leq 2 + \frac{1}{\log D} \log \frac{m+1}{n+1},
\]
hence 
\[
S(m,n) \leq c^2 \left( \frac{m+1}{n+1} \right)^{\frac{\log c_1}{\log D}} (\Lambda^2 M)^{m-n}.
\]
Claim (i) follows.

\medskip

\noindent (ii) By using the periodicity of $(\sigma_h)$, the composition whose norm is $S( D^{(p+2q)\binom{d}{2}}, D^{p\binom{d}{2}})$ can be rewritten as
\begin{equation}
\label{lunga}
\begin{split}
\Bigl( \mathscr{Q} \mathscr{A}_{L_{D^{p\binom{d}{2}+1},D^{p\binom{d}{2}}}} \mathscr{Q} \Bigr) \mathscr{A}_{U_{\sigma_1}} \Bigl( \mathscr{Q} \mathscr{A}_{L_{D^{p\binom{d}{2}+2},D^{p\binom{d}{2}+1}}} \mathscr{Q} \Bigr) \mathscr{A}_{U_{\sigma_2}} \dots \\ \dots\mathscr{A}_{\sigma_{2q\binom{d}{2}-1}}  \Bigl( \mathscr{Q} \mathscr{A}_{L_{D^{(p+2q)\binom{d}{2}+2},D^{(p+2q)\binom{d}{2}-1}}} \mathscr{Q} \Bigr) \mathscr{A}_{U_{\sigma_{2q{\binom{d}{2}}}}}.
\end{split}
\end{equation}
By Lemma \ref{decomp}, we have the decompositions
\[
\mathscr{Q} \mathscr{A}_{L_{D^{p \binom{d}{2} + j + 1}, D^{p \binom{d}{2} + j}}} \mathscr{Q} = \mathscr{M}_j^0 + \mathscr{M}_j^1, \qquad \forall j=0,1,\dots, 2q \binom{d}{2} -1,
\]
where
\begin{equation}
\label{deco}
\supp \mathscr{M}_j^1 \subset \mathbb{W}, \qquad \left\{ \begin{array}{ccl}  \bigl\|\mathscr{M}_j^0\bigr\|  &\leq& C _0\, \theta^{\ell(j)} \, \Lambda^{\ell(j)}, \\   \bigl\|\mathscr{M}_j^1\bigr\|  &\leq& C_1\,  (\Lambda^2 M)^{\ell(j)}, \end{array} \right. \qquad \ell(j) := D^{p\binom{d}{2}+j} (D-1),
\end{equation}
for every $\theta>1$, for some $C_0=C_0(\theta)$ and $C_1$. Notice that
\begin{equation}
\label{lasomma}
\sum_{j=0}^{2q\binom{d}{2}-1} \ell(j) = D^{p\binom{d}{2}}(D^{2q\binom{d}{2}}-1).
\end{equation}
Therefore, the composition (\ref{lunga}) can be written as
\begin{equation}
\label{sommona}
\sum_{\epsilon \in \mathbbm{2}^{2q\binom{d}{2}}} \mathscr{M}_0^{\epsilon(0)} \mathscr{A}_{U_{\sigma_1}}  \mathscr{M}_1^{\epsilon(1)} \mathscr{A}_{U_{\sigma_2}} \dots  \mathscr{A}_{U_{\sigma_{2q\binom{d}{2}-1}}} \mathscr{M}_{2q \binom{d}{2} -1}^{\epsilon\bigl(2q \binom{d}{2} -1\bigr)} \mathscr{A}_{U_{\sigma_{2q\binom{d}{2}}}}.
\end{equation}
Since the support of each $\mathscr{M}_j^1$ is contained in $\mathbb{W}$, Lemma \ref{nilp} implies that if $\epsilon \in \mathbbm{2}^{2q\binom{d}{2}}$ is such that
\[
\epsilon(j)=1 \qquad \mbox{for} \quad  p + r \binom{d}{2} \leq  j \leq p + (r+1) \binom{d}{2},
\]
for some integer $0\leq r < q$, then the corresponding term in the sum (\ref{sommona}) vanishes. We denote by $E\subset \mathbbm{2}^{2q\binom{d}{2}}$ the complementary set of $\epsilon$'s, that is the set of all $\epsilon \in \mathbbm{2}^{2q\binom{d}{2}}$ such that for every $0\leq r < q$ there is an index $j$ between $p + r \binom{d}{2}$ and $p + (r+1) \binom{d}{2}$ such that $\epsilon(j)=0$. Then $S( D^{(p+2q)\binom{d}{2}}, D^{p\binom{d}{2}})$, that is the norm of the sum (\ref{sommona}), can be estimated using (\ref{deco}) by
\begin{equation}
\label{gg1}
\begin{split}
& S\left(  D^{(p+2q)\binom{d}{2}}, D^{p\binom{d}{2}}\right)  \leq  2^{2q\binom{d}{2}} \max_{\epsilon\in E} \prod_{j=0}^{2q \binom{d}{2}-1} C_2\, (\Lambda^2 M)^{\epsilon(j) \ell(j)} \theta^{(1-\epsilon(j)) \ell(j)} \Lambda^{(1-\epsilon(j)) \ell(j)} \\
& \leq 2^{2q\binom{d}{2}} \max_{\epsilon\in E} \prod_{j=0}^{2q \binom{d}{2}-1} C_2\, (\Lambda^2 M)^{\epsilon(j) \ell(j)} \theta^{\ell(j)} \Lambda^{(1-\epsilon(j)) \ell(j)} \\
& =  (2C_2)^{2q\binom{d}{2}} \theta^{D^{p\binom{d}{2}}(D^{2q\binom{d}{2}}-1)} \max_{\epsilon\in E} \left( (\Lambda^2 M)^{\sum_{j=0}^{2q \binom{d}{2}-1} \epsilon(j)\ell(j)} \Lambda^{\sum_{j=0}^{2q \binom{d}{2}-1} (1-\epsilon(j)) \ell(j)}\right),
\end{split}
\end{equation}
where $C_2=C_2(\theta) = \max\{C_0(\theta),C_1\}$ and we have used (\ref{lasomma}).
By multiplying and dividing by
\[
(\Lambda M)^{\sum_{j=0}^{2q \binom{d}{2}-1} (1-\epsilon(j)) \ell(j)},
\]
and by using (\ref{lasomma}), we obtain
\begin{equation}
\label{gg2}
\begin{split}
\max_{\epsilon\in E} & \left( (\Lambda^2 M)^{\sum_{j=0}^{2q \binom{d}{2}-1} \epsilon(j)\ell(j)} \Lambda^{\sum_{j=0}^{2q \binom{d}{2}-1} (1-\epsilon(j)) \ell(j)}\right) \\
&=  (\Lambda^2 M)^{D^{p\binom{d}{2}}(D^{2q\binom{d}{2}}-1)} \max_{\epsilon \in E}\, (\Lambda M)^{- \sum_{j=0}^{2q \binom{d}{2}-1} (1-\epsilon(j)) \ell(j)} \\
&=  (\Lambda^2 M)^{D^{p\binom{d}{2}}(D^{2q\binom{d}{2}}-1)} (\Lambda M)^{- \min_{\epsilon \in E} \sum_{j=0}^{2q \binom{d}{2}-1} (1-\epsilon(j)) \ell(j) },
\end{split}
\end{equation}
where we have also used the fact that $\Lambda M$ is greater than 1.
The last minimum is achieved for $\epsilon$ such that the sum of the terms $\ell(j)$ over all $j$ for which $\epsilon(j)$ is zero is minimal among all $\epsilon$ in $E$. By the definition of $E$ and by the fact that the function $j\mapsto \ell(j)$ is increasing, we deduce that this minimum is achieved at
\[
\epsilon(j) = \left\{ \begin{array}{ll} 0 & \mbox{if } j=(p+2r) \binom{d}{2} \mbox{ with } 0\leq r < q \\ 1 & \mbox{otherwise,} \end{array} \right.
\]
and we find
\begin{equation}
\label{gg3}
\begin{split}
\min_{\epsilon \in E} \sum_{j=0}^{2q \binom{d}{2}-1} (1-\epsilon(j)) \ell(j)  &= \sum_{r=0}^{q-1} \ell \bigl( 2r {\scriptstyle \binom{d}{2}} \bigr) = \sum_{r=0}^{q-1} D^{(p+2r)\binom{d}{2}} (D-1) \\ &= \frac{D-1}{D^{2\binom{d}{2}}-1} D^{p \binom{d}{2}} \left( D^{2q \binom{d}{2} } - 1 \right) = \delta \, D^{p \binom{d}{2}} \left( D^{2q \binom{d}{2} } - 1 \right).
\end{split}
\end{equation}
Putting together (\ref{gg1}), (\ref{gg2}) and (\ref{gg3}), we obtain
\[
\begin{split}
S\left( D^{(p+2q)\binom{d}{2}}, D^{p\binom{d}{2}}\right) &\leq (2C_2)^{2q\binom{d}{2}} \theta^{D^{p\binom{d}{2}}(D^{2q\binom{d}{2}}-1)}  (\Lambda^2 M)^{D^{p\binom{d}{2}}(D^{2q\binom{d}{2}}-1)} (\Lambda M)^{- \delta  D^{p \binom{d}{2}} ( D^{2q \binom{d}{2} } - 1 )} \\
& =  (2C_2)^{2q\binom{d}{2}} \theta^{D^{p\binom{d}{2}}(D^{2q\binom{d}{2}}-1)}  \bigl( \Lambda^{2-\delta} M^{1- \delta} \bigr)^ {D^{p \binom{d}{2}} ( D^{2q \binom{d}{2} } - 1 )}.
\end{split}
\] 
The desired estimate follows, because $\theta>1$ is arbitrary and from the fact that 
\[
(2C_2)^{2q\binom{d}{2}} = O\Bigl(\sigma^{D^{2q\binom{d}{2}}} \Bigr) \qquad \mbox{for} \quad q\rightarrow \infty,
\]
for any $\sigma>0$.
\end{proof}

\begin{proof}[Proof of Proposition \ref{operatore}] Fix some $\Theta>\Lambda^2 M$. By (\ref{conve}), up to the choice of a smaller $\Theta$ we can assume that
\begin{equation}
\label{conve2}
\eta = \eta(\Theta) := \Theta^{D^{2 \binom{d}{2}}} (\Lambda M)^{-\delta} < 1.
\end{equation}
We must prove that there exists a positive number $C=C(\Theta)$ such that
\begin{equation}
\label{lastima}
\sum_{m\geq n} S(m,n) \leq C\, \Theta^{(D^{3\binom{d}{2}}-1)n},
\end{equation}
for every $n\in \N$. Let $p$ be a natural number. By (\ref{sub}) we have 
\begin{equation}
\label{ppp1}
\begin{split}
 \sum_{m\geq D^{p\binom{d}{2}} } & S \bigl( m, D^{p\binom{d}{2}} \bigr) = \sum_{q\geq 0} \sum_{D^{(p+2q)\binom{d}{2}} \leq m < D^{(p+2(q+1))\binom{d}{2}} }  \hspace{-1cm} S \bigl( m, D^{p\binom{d}{2}} \bigr) \\
& \leq \sum_{q\geq 0} S \bigl( D^{(p+2q)\binom{d}{2}}, D^{p\binom{d}{2}} \bigr) \hspace{-1cm}
\sum_{D^{(p+2q)\binom{d}{2}} \leq m < D^{(p+2(q+1))\binom{d}{2}} } \hspace{-1cm} S \bigl( m, D^{(p+2q)\binom{d}{2}} \bigr).
\end{split} \end{equation}
The inner sum can be estimated using Lemma \ref{treniS} (i) as
\begin{equation}
\label{ppp2}
\begin{split}
\sum_{D^{(p+2q)\binom{d}{2}} \leq m < D^{(p+2(q+1))\binom{d}{2}} }  \hspace{-1cm} S \bigl( m, D^{(p+2q)\binom{d}{2}} \bigr) \leq C_1 \hspace{-1cm} \sum_{D^{(p+2q)\binom{d}{2}} \leq m < D^{(p+2(q+1))\binom{d}{2}} } \hspace{-1cm}  \Theta^{m-D^{(p+2q)\binom{d}{2}}} \\ = C_1 \,\frac{\Theta^{D^{(p+2q)\binom{d}{2}} ( D^{2\binom{d}{2}} - 1 )} - 1}{\Theta - 1} \leq C_3\,  \Theta^{D^{(p+2q)\binom{d}{2}} ( D^{2\binom{d}{2}} - 1 )},
\end{split} \end{equation}
where $C_3 = C_3(\Theta) := C_1(\Theta) /(\Theta - 1)$. By (\ref{ppp1}), (\ref{ppp2}) and Lemma \ref{treniS} (ii), we obtain the estimate
\begin{equation}
\label{ppp3}
\begin{split}
\sum_{m\geq D^{p\binom{d}{2}} }  S \bigl( m, D^{p\binom{d}{2}} \bigr) &\leq  \sum_{q\geq 0} C_2 \,( \Theta \Lambda^{-\delta} M^{-\delta} )^{D^{p\binom{d}{2}}( D^{2q\binom{d}{2}} - 1)} C_3 \, \Theta^{D^{(p+2q)\binom{d}{2}} ( D^{2\binom{d}{2}} - 1 )} \\
&= C_2 C_3 \, (\Theta \Lambda^{-\delta} M^{-\delta})^{-D^{p\binom{d}{2}}} \sum_{q\geq 0} \Bigl( \Theta^{D^{2\binom{d}{2}}} (\Lambda M)^{-\delta} \Bigr)^{D^{(p+2q)\binom{d}{2}}} \\
&= C_2 C_3 \, (\Theta \Lambda^{-\delta} M^{-\delta})^{-D^{p\binom{d}{2}}} \sum_{q\geq 0} \eta^{D^{(p+2q)\binom{d}{2}}}.
\end{split}
\end{equation}
By (\ref{conve2}), the positive number $\eta$ is smaller than 1, so the above series converges. More precisely, 
\begin{equation}
\label{ppp4}
\sum_{q\geq 0} \eta^{D^{(p+2q)\binom{d}{2}}} \leq \sum_{j\geq 1} \eta^{D^{p\binom{d}{2}}j} = \frac{\eta^{D^{p\binom{d}{2}}}}{1-\eta^{D^{p\binom{d}{2}}}} \leq \frac{1}{1-\eta} \, \eta^{D^{p\binom{d}{2}}}.
\end{equation}
By (\ref{ppp3}) and (\ref{ppp4}), we find the upper bound
\begin{equation}
\label{ppp5}
\sum_{m\geq D^{p\binom{d}{2}} }  S \bigl( m, D^{p\binom{d}{2}} \bigr) \leq C_4 (\Theta \Lambda^{-\delta} M^{-\delta})^{-D^{p\binom{d}{2}}} \eta^{D^{p\binom{d}{2}}} = C_4 \Bigl( \Theta^{D^{2\binom{d}{2}} - 1} \Bigr)^{D^{p\binom{d}{2}}}.
\end{equation}
Now we fix an arbitrary natural number $n$ and we let $p$ be the natural number such that
\begin{equation}
\label{dove2}
D^{(p-1)\binom{d}{2}} < n+ 1 \leq D^{p\binom{d}{2}}.
\end{equation}
By (\ref{sub}) we have
\begin{equation}
\label{ppp6}
\begin{split}
\sum_{m\geq n} S(m,n) &= \sum_{n\leq m < D^{p\binom{d}{2}}} S(n,m) + \sum_{m\geq D^{p\binom{d}{2}}} S(n,m) \\ &\leq \sum_{n\leq m < D^{p\binom{d}{2}}} S(n,m) + S\bigl(D^{p\binom{d}{2}},n\bigr) \sum_{m\geq D^{p\binom{d}{2}}} S\bigl(D^{p\binom{d}{2}},m\bigr).
\end{split} \end{equation}
By Lemma \ref{treniS} (i), the first sum in the above line has the upper bound
\[
\sum_{n\leq m < D^{p\binom{d}{2}}} S(n,m) \leq C_1 \hspace{-.5cm} \sum_{n\leq m < D^{p\binom{d}{2}}} \Theta^{m-n} \leq \frac{C_1}{\Theta - 1}\, \Theta^{D^{p\binom{d}{2}}-n},
\]
Also the term $S(D^{p\binom{d}{2}},n)$ can be estimated by using Lemma \ref{treniS} (i), while the last sum in (\ref{ppp6}) has the upper bound (\ref{ppp5}), and  we obtain
\begin{equation}
\label{ppp7}
\sum_{m\geq n} S(m,n) \leq \frac{C_1}{\Theta - 1}\, \Theta^{D^{p\binom{d}{2}}-n} + C_1 \,  \Theta^{D^{p\binom{d}{2}}-n} C_4\, \Bigl( \Theta^{D^{2\binom{d}{2}} - 1} \Bigr)^{D^{p\binom{d}{2}}}.
\end{equation}
By (\ref{dove2}),
\[
D^{p\binom{d}{2}} < D^{\binom{d}{2}} (n+1),
\]
so (\ref{ppp7}) implies
\begin{equation}
\label{ppp8}
\begin{split}
\sum_{m\geq n} S(m,n) &\leq C_5 \, \Theta^{(D^{\binom{d}{2}}-1)n} + C_6\,   
\Theta^{(D^{\binom{d}{2}}-1)n}
\Bigl( \Theta^{D^{2\binom{d}{2}} - 1} \Bigr)^{D^{\binom{d}{2}} n} \\
&= C_5 \, \Theta^{(D^{\binom{d}{2}}-1)n} + C_6\,   
\Theta^{(D^{3\binom{d}{2}}-1)n},
\end{split}
\end{equation}
for suitable numbers $C_5,C_6$. The estimate (\ref{ppp8}) implies that there exists $C$ such that
\[
\sum_{m\geq n} S(m,n) \leq C \, \Theta^{(D^{3\binom{d}{2}}-1)n},
\]
proving (\ref{lastima}).
\end{proof}

\section{The basin of attraction of a class of sequences}

We recall that if $f=(f_n)$ is a sequence of automorphisms of $\C^d$ which fix the origin, then the basin of attraction of the origin with respect to $f$ is the set
\[
\set{z\in \C^d}{f_{n,0}(z) \rightarrow 0 \mbox{ for } n\rightarrow \infty}.
\]
Here we are using the notation
\[
f_{n,m} := f_{n-1} \circ \dots \circ f_m, \qquad \forall n > m \geq 0, \qquad f_{n,n} = \mathrm{id},
\]
which we have used so far for linear mappings.
The aim of this section is to exhibit a useful class of sequences of automorphisms of $\C^d$ which have the whole $\C^d$ as basin of attraction of the origin.

Let $f_n \colon  \C^d \rightarrow \C^d$ be defined as
\begin{equation}
\label{laforma}
f_n (z) := L_n z + p_n(z)\;,
\end{equation}
where:
\renewcommand{\theenumi}{\alph{enumi}}
\renewcommand{\labelenumi}{(\theenumi)}
\begin{enumerate}
\item $(L_n)$ is a sequence of upper triangular linear automorphisms of $\C^d$
 such that $\|L_{n,m}\|\leq C\, \Lambda^{n-m}$ for every $n\geq m \geq 0$, where $C>0$ and $0<\Lambda<1$;
\item $(p_n)$ is a bounded sequence of polynomial maps (that is, the degree of $p_n$ and its coefficients are uniformly bounded) of the form
\[
p_n (z_1,\dots, z_d ) = \bigl(p_n^1(z_2,\dots,z_d), p_n^2(z_3,\dots,z_d),
\dots, p_n^{d-1} (z_d) , 0\bigr),
\]
such that  $p_n(0)=0$, $Dp_n(0)=0$.
\end{enumerate}
It is well-known and easy to show that each $f_n$ is an automorphism of $\C^d$. We refer to maps of this form as to ``special triangular automorphisms''.
Since the degree of $p_n$ is bounded, there exist positive integers $k_1,k_2, \dots,k_{d-1},k_d=1$ such that
\begin{equation}
\label{gradi}
\deg p_n^j \bigl(z_{j+1}^{k_{j+1}},z_{j+2}^{k_{j+2}}, \dots, z_d^{k_d} \bigr) \leq k_j\;, \quad \forall j=1,\dots,d-1\;.
\end{equation}
It is easy to show that the composition of two polynomial maps of the form (\ref{laforma}) which satisfy (\ref{gradi}) has the same properties (see \cite[Lemma 6.2]{aam11}). Hence, the number 
\[
K:=  \max\{k_1,\dots,k_d\}
\] 
may be called the ``stable degree'' of the sequence of maps $(f_n)$.  When all the polynomial maps $p_n$ have degree at most 2, then we can take $k_j = 2^{d-j}$ in (\ref{gradi}) and hence the stable degree is $2^{d-1}$.
The following result is proved in \cite[Lemma 6.4]{aam11}:

\begin{lem}
\label{triang}
Let $(f_n)$ be a sequence of special triangular automorphisms of $\C^d$ of the form (\ref{laforma}) which satisfies (a), (b), (\ref{gradi}) and has stable degree $K$. Then there exists a number $C_0$ such that
\[
|f_{n,0} (z)| \leq C_0 \, \Lambda^n ( |z| + |z|^K)\;, \qquad \forall z\in \C^d\;,
\]
for every $n\in \N$. 
\end{lem}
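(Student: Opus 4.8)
The plan is to prove, by descending induction on the coordinate index, the sharper component-wise estimate
\[
\bigl| f_{n,0}(z)^j \bigr| \le C_j\,\Lambda^n \max\bigl(|z|,|z|^{k_j}\bigr), \qquad j=1,\dots,d,
\]
from which the lemma follows at once by summing the squares and observing that $\max(|z|,|z|^{k_j}) \le |z| + |z|^K$. Throughout I will use that the exponents may be taken non-increasing, $k_1 \ge k_2 \ge \dots \ge k_d = 1$, so that $K = k_1$; this monotonicity is the natural normalisation (it is what makes the property (\ref{gradi}) stable under composition with the same exponents, as in \cite[Lemma 6.2]{aam11}, since the linear part of the $j$-th component of $f_m$ involves the variable $z_s$, of weighted degree $k_s$, for every $s\ge j$), and in the degree $\le 2$ case relevant to this paper it holds with $k_j = 2^{d-j}$.

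The main device is the discrete variation-of-constants formula for the orbit $w_m := f_{m,0}(z)$: from $w_{m+1} = L_m w_m + p_m(w_m)$ one gets, for all $n$,
\[
w_n = L_{n,0}\,z + \sum_{m=0}^{n-1} L_{n,m+1}\, p_m(w_m).
\]
For the base case $j=d$, the last coordinate evolves autonomously and linearly, so $w_n^d = (L_{n,0})_{dd}\,z_d$ and hence $|w_n^d| \le \|L_{n,0}\|\,|z| \le C\Lambda^n|z|$. For the inductive step, fix $j<d$ and assume the estimate for the coordinates $j+1,\dots,d$. Taking the $j$-th component of the Duhamel formula and using that every $L_{n,m+1}$ is upper triangular — so that only the components $p_m^i$ with $i\ge j$ contribute, and each of these involves only the already-controlled coordinates $w_m^{i+1},\dots,w_m^d$ — we obtain
\[
|w_n^j| \le \|L_{n,0}\|\,|z| + \sum_{m=0}^{n-1}\ \sum_{i\ge j}\ \|L_{n,m+1}\|\ \bigl|p_m^i(w_m^{i+1},\dots,w_m^d)\bigr|.
\]
Now each monomial $\prod_{\ell>i} z_\ell^{\alpha_\ell}$ of $p_m^i$ satisfies $|\alpha|\ge 2$ (as $p_m(0)=0$, $Dp_m(0)=0$) and $\sum_{\ell>i}\alpha_\ell k_\ell \le k_i$ by (\ref{gradi}); feeding in the inductive bounds therefore gives $\bigl|p_m^i(w_m^{i+1},\dots,w_m^d)\bigr| \le C'\,\Lambda^{2m}\max(|z|,|z|^{k_i})$, the factor $\Lambda^{2m}$ coming from $|\alpha|\ge 2$ and the spatial factor from the fact that $\prod_{\ell>i}\max(|z|,|z|^{k_\ell})^{\alpha_\ell}$ equals $|z|^{\sum_{\ell>i}\alpha_\ell k_\ell}$ when $|z|\ge 1$ and $|z|^{|\alpha|}$ when $|z|\le 1$, both of which are at most $\max(|z|,|z|^{k_i}) \le \max(|z|,|z|^{k_j})$ (here $k_i\le k_j$ by monotonicity, since $i\ge j$). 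Inserting $\|L_{n,m+1}\| \le C\Lambda^{n-m-1}$ and summing, $\sum_{m=0}^{n-1} \Lambda^{n-m-1}\Lambda^{2m} = \Lambda^{n-1}\sum_{m} \Lambda^m = O(\Lambda^n)$, so the whole right-hand side is bounded by $C_j\Lambda^n\max(|z|,|z|^{k_j})$, closing the induction.

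The step I expect to be the main obstacle is precisely this last, non-resonance estimate. The linear propagator $L_{n,m}$ decays only like $\Lambda^{n-m}$ — the same rate at which a linear forcing would decay — so a naive scalar recursion for the single coordinate $w^j$, treating the lower coordinates as an external inhomogeneity, would spuriously produce a factor of $n$ (a genuine phenomenon, as one sees already in $2\times2$ Jordan-type examples); the cure is to keep the full operator-norm bound $\|L_{n,m}\| \le C\Lambda^{n-m}$ on the triangular propagator, which absorbs all the linear inter-coordinate coupling at once, and to use that the genuinely nonlinear forcing $p_m(w_m)$ carries the extra decay $\Lambda^{2m}$ because $p_m$ vanishes to second order. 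One must also be careful that the weighted-degree bookkeeping of (\ref{gradi}), combined with the monotonicity of the $k_\ell$, is exactly what keeps the exponents from accumulating as the nonlinear terms are composed along the orbit and what lets the linear coupling from a higher coordinate $i>j$ into coordinate $j$ respect the target exponent $k_j$.
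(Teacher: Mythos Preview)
Your argument is correct. The paper does not actually prove this lemma: it simply quotes \cite[Lemma 6.4]{aam11}. Your proof---the discrete Duhamel formula for $w_n=f_{n,0}(z)$, descending induction on the coordinate index, and the weighted-degree bookkeeping based on~(\ref{gradi})---is a clean self-contained substitute, and is in all likelihood the argument behind the cited reference.

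Two remarks. First, your use of the monotonicity $k_1\ge k_2\ge\dots\ge k_d=1$ is not stated in the lemma, but you are right that it is implicit in the paper's setup: without it the composition stability invoked just before the lemma (via \cite[Lemma 6.2]{aam11}) would fail, since the upper-triangular linear part of $f_m$ carries $z_s$ (of weight $k_s$) into the $j$-th slot for every $s\ge j$. In any case monotone weights always exist (e.g.\ $k_j=D^{\,d-j}$ if $\deg p_n\le D$), and in the degree-$2$ case used later one has $k_j=2^{\,d-j}$ explicitly. Second, your closing paragraph identifies exactly the point that makes the estimate work: the propagator contributes $\Lambda^{n-m}$ while the nonlinearity, vanishing to second order, contributes an extra $\Lambda^{2m}$, so the convolution $\sum_m \Lambda^{n-m-1}\Lambda^{2m}=\Lambda^{n-1}\sum_m\Lambda^m$ is geometrically summable and no polynomial loss in $n$ appears. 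That is the whole content of the lemma.
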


In particular, the basin of attraction of a sequence of automorphisms $(f_n)$ which satisfies the assumptions of the above lemma is the whole $\C^d$. The next result shows that this fact remains true if some of the $f_n$'s, but not too many, are  composed by a linear automorphism, which may destroy the triangular structure:

\begin{thm}
\label{treni}
Let $(f_n)$ be a sequence of special triangular automorphisms of $\C^d$ of the form (\ref{laforma}) which satisfies (a), (b), (\ref{gradi}) and has stable degree $K$. Let $(T_h)$ be a bounded sequence of linear automorphisms of $\C^d$, let $(m_h)$ be a strictly increasing sequence of natural numbers, and set
\[
g_n := \left\{ \begin{array}{ll} f_n & \mbox{if } n\neq m_h, \; \forall h\in \N, \\ f_{m_h} \circ T_h & \mbox{if } n=m_h. \end{array} \right.
\]
If $K$ and the sequence $(m_h)$ satisfy
\begin{equation}\label{lunghi}
\lim_{h\rightarrow \infty} (m_{h+1} - m_h) = +\infty, \qquad \sum_{h\in \N} K^{-h} m_h = +\infty,
\end{equation}
then the basin of attraction of the sequence $(g_n)$ is the whole $\C^d$.
\end{thm}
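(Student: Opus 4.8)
The plan is to follow the orbit of an arbitrary point $z\in\C^d$ under $(g_n)$ by cutting it into the blocks delimited by the special times $m_h$, and to control the value of the orbit at these times by means of Lemma \ref{triang}. Put $\Delta_h := m_{h+1}-m_h$, $A := \sup_h\|T_h\|<\infty$, and $\zeta_h := g_{m_h,0}(z)$, $r_h := |\zeta_h|$. Since $m_h$ is the only special time in the interval $[m_h,m_{h+1})$, one has $g_{n,m_h}=f_{n,m_h}\circ T_h$ for every $n$ with $m_h\le n<m_{h+1}$; in particular $\zeta_{h+1}=f_{m_{h+1},m_h}(T_h\zeta_h)$. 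The hypotheses (a), (b), (\ref{gradi}) are invariant under shifting the index, so Lemma \ref{triang} applied to the sequence $(f_{m_h+k})_{k\ge 0}$ (which satisfies them with the same constants) produces a constant $C_0$, independent of $h$, with $|f_{m_h+k,m_h}(w)|\le C_0\Lambda^k(|w|+|w|^K)$ for all $w$ and $k$. Evaluating at $k=\Delta_h$ and $w=T_h\zeta_h$ gives the recursion
\[
r_{h+1}\le C_0\,\Lambda^{\Delta_h}\bigl(A\,r_h+A^K r_h^K\bigr),\qquad h\in\N,
\]
while evaluating at an intermediate $k=n-m_h$ with $m_h\le n<m_{h+1}$ gives $|g_{n,0}(z)|\le C_0\bigl(A\,r_h+A^K r_h^K\bigr)$. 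Since $m_h\to\infty$ (because $\Delta_h\to\infty$), it therefore suffices to prove that $r_h\to 0$. We may assume $K\ge 2$: when $K=1$ the polynomial part $p_n$ is absent, the recursion above becomes linear with factor $C_0\Lambda^{\Delta_h}(A+A^K)\to 0$, and $r_h\to 0$ at once.

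Next I would isolate a contracting regime. Because $\Delta_h\to\infty$ there is $h_0$ such that $c_h:=C_0\Lambda^{\Delta_h}(A+A^K)<1$ for all $h\ge h_0$. If $r_h\le 1$ for some $h\ge h_0$, then $r_h^K\le r_h$ and the recursion gives $r_{h+1}\le c_h r_h\le r_h\le 1$; inductively $r_m\le\bigl(\prod_{j=h}^{m-1}c_j\bigr)r_h$ for all $m\ge h$, and since $c_j\to 0$ the product tends to $0$, so $r_m\to 0$. Hence the whole matter reduces to showing that $r_h\le 1$ for at least one index $h\ge h_0$. Suppose, for contradiction, that $r_h>1$ for every $h\ge h_0$. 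Then $s_h:=\log r_h>0$ and $r_h\le r_h^K$, so the recursion yields $s_{h+1}\le K s_h+(\log\Lambda)\Delta_h+\log B$ with $B:=C_0(A+A^K)$, for every $h\ge h_0$. Iterating from $h_0$ and dividing by $K^{h-h_0}$ leads to an estimate of the form
\[
K^{-(h-h_0)}s_h\le C'+(\log\Lambda)\,K^{h_0-1}\sum_{j=h_0}^{h-1}K^{-j}\Delta_j,
\]
for a constant $C'$ depending only on $h_0$, $s_{h_0}$, $B$ and $K$. A summation by parts gives the identity $\sum_{j=h_0}^{h-1}K^{-j}\Delta_j=(K-1)\sum_{k=h_0+1}^{h-1}K^{-k}m_k+K^{-(h-1)}m_h-K^{-h_0}m_{h_0}$, whose right-hand side tends to $+\infty$ as $h\to\infty$ because $\sum_k K^{-k}m_k=+\infty$ by hypothesis. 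Since $\log\Lambda<0$, the displayed upper bound tends to $-\infty$; in particular $s_h<0$ for $h$ large, contradicting $r_h>1$. Therefore $r_h\le 1$ for some $h\ge h_0$, hence $r_h\to 0$, hence $g_{n,0}(z)\to 0$, which is the claim.

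The main obstacle is precisely this second regime, i.e.\ ruling out the possibility that the orbit grows forever: the degree-$K$ term $r_h^K$ in the recursion can a priori inflate $r_h$ faster than the linear contraction $\Lambda^{\Delta_h}$ deflates it, and it is exactly the growth condition $\sum_h K^{-h}m_h=+\infty$ that prevents this, through the $K^{-h}$-weighted telescoping above (which is why the ``stable degree'' $K$ enters the hypothesis with that precise weight). The remaining points — verifying that the shift of $(f_n)$ satisfies the hypotheses of Lemma \ref{triang} with uniform constants, carrying out the summation by parts that identifies $\sum K^{-j}\Delta_j$ with a tail of $(K-1)\sum K^{-k}m_k$, and keeping track of the constants $A$, $B$, $C_0$ — are routine.
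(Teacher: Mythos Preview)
Your argument is correct and essentially identical to the paper's: the paper factors the recursion analysis for $(r_h)$ into a separate Lemma~\ref{tozz0} (sufficiency of $\sum K^{-h}s_h=+\infty$ for $r_h\to 0$), but otherwise proceeds exactly as you do --- Lemma~\ref{triang} on each block, summation by parts to convert $\sum K^{-k}m_k=+\infty$ into $\sum K^{-j}\Delta_j=+\infty$, then the contracting/expanding dichotomy via $\log r_h$. One small correction: the identity $g_{n,m_h}=f_{n,m_h}\circ T_h$ holds for $m_h< n\le m_{h+1}$, not for $n=m_h$ (where the left-hand side is the identity map), so your stated range is off by one; this does not affect the argument, since you only use the formula at $n=m_{h+1}$ and at intermediate $n>m_h$.
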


The proof makes use of the following lemma:

\begin{lem}\
\label{tozz0}
Let $0<\Lambda<1$, $C>0$, $K>1$, and let $(s_h)$ be a diverging sequence of positive numbers. Then the sequence $(r_h)$ defined by 
\begin{equation}\label{raggetti}
\left\{ \begin{array}{ll} r_{h+1} = C \,\Lambda^{s_h} (r_h + r_h^K) & \forall h\in \N, \\ r_0 = r, & \end{array} \right.
\end{equation}
is infinitesimal for every initial value $r\geq 0$ if and only if 
\begin{equation}
\label{diverge}
\sum_{h\in \N} K^{-h} s_h = +\infty.
\end{equation}
\end{lem}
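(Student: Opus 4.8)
The plan is to pass to logarithms and treat (\ref{raggetti}) as a perturbation of the linear recursion $x_{h+1} = K x_h - (\text{const}\cdot s_h)$, whose asymptotics are controlled exactly by $\sum_h K^{-h} s_h$ once one divides by $K^{h+1}$ and telescopes. The key structural observation is that $(r_h)$ has two regimes: while $r_h \geq 1$ the superlinear term $r_h^K$ dominates (this is where the series enters), whereas once $r_h < 1$ the linear term dominates and, since $s_h \to +\infty$ forces $2C\Lambda^{s_h} < \tfrac12$ for $h$ large, the sequence decays at least geometrically, hence to $0$. Concretely, I would fix $h_0$ with $2C\Lambda^{s_h} < \tfrac12$ for all $h \geq h_0$, set $x_h := \log r_h$ (the case $r = 0$ being trivial), and record that whenever $r_h \geq 1$ one has $r_h^K \leq r_h + r_h^K \leq 2 r_h^K$, so after taking logarithms
\[
K x_h - \beta_h \;\leq\; x_{h+1} \;\leq\; K x_h - c_h, \qquad \beta_h := |\log\Lambda|\,s_h + |\log C|, \quad c_h := |\log\Lambda|\,s_h - \log(2C).
\]
Dividing by $K^{h+1}$ and telescoping over any range of indices on which $r_j \geq 1$ gives two-sided control of $x_h/K^h$ in terms of the partial sums of $\sum_j K^{-j} s_j$, the constant $C$ contributing only the convergent series $\sum_j K^{-j} < \infty$.

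For the implication $\sum_h K^{-h} s_h = +\infty \Rightarrow r_h \to 0$: first suppose $r_h \geq 1$ for every $h \geq h_0$. Then the upper bound above, telescoped from $h_0$, yields $x_h \leq K^h\big(K^{-h_0} x_{h_0} - \tfrac1K\sum_{j=h_0}^{h-1} K^{-j} c_j\big)$; since $c_j \geq \tfrac12 |\log\Lambda|\, s_j$ for $j$ large, the series $\sum_j K^{-j} c_j$ diverges, the bracket tends to $-\infty$, and hence $x_h \to -\infty$, contradicting $r_h \geq 1$. Therefore $r_{h_1} < 1$ for some $h_1 \geq h_0$, and then $r_{h+1} \leq 2C\Lambda^{s_h} r_h \leq \tfrac12 r_h$ for all $h \geq h_1$, so $r_h \leq 2^{-(h-h_1)} r_{h_1} \to 0$.

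For the converse $\sum_h K^{-h} s_h < +\infty \Rightarrow$ some initial value has $r_h \not\to 0$: set $B := |\log\Lambda|\sum_{j\geq0} K^{-j} s_j + |\log C|\sum_{j\geq0} K^{-j} < +\infty$ and pick $r$ with $x_0 = \log r > 1 + B/K$. I would prove by induction that $x_h \geq K^h(x_0 - B/K)\;(\geq K^h)$ for all $h$: the induction hypothesis forces $r_j \geq 1$ for $j \leq h$, which is precisely what licenses one more application of the lower bound $x_{h+1} \geq K x_h - \beta_h$, and telescoping from $0$ gives $x_{h+1}/K^{h+1} \geq x_0 - \tfrac1K\sum_{j\geq0} K^{-j}\beta_j = x_0 - B/K$. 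Thus $x_h \to +\infty$, i.e. $r_h \to +\infty$, so in particular $r_h \not\to 0$.

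The only real difficulty is bookkeeping: the telescoped estimates are valid only on index ranges where $r_j \geq 1$, so the two arguments must be arranged (one by contradiction, one by induction) so that this hypothesis is maintained exactly where it is invoked. Everything involving the constants $C$, $2C$, $\log C$ is harmless precisely because $\sum_h K^{-h}$ converges; whether the ``logarithmic dynamics'' escapes to $+\infty$ or $-\infty$ is decided solely by the divergence or convergence of $\sum_h K^{-h} s_h$.
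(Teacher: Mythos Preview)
Your proof is correct and follows essentially the same route as the paper's: pass to logarithms, use that for $r_h\geq 1$ the recursion is sandwiched by linear recursions of the form $x_{h+1}\approx K x_h - |\log\Lambda|\,s_h$, telescope after dividing by $K^{h+1}$, and observe that once $r_h$ drops below $1$ the linear term dominates and gives geometric decay. The only cosmetic differences are that the paper uses $\log(1+t)\leq 1+\log t$ in place of your $r_h+r_h^K\leq 2r_h^K$, and for the necessity direction it iterates the always-valid bound $r_{h+1}\geq C\Lambda^{s_h}r_h^K$ directly rather than maintaining $r_h\geq 1$ by induction.
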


\begin{proof}
Let us show that condition (\ref{diverge}) is sufficient. Since $(s_h)$ diverges and $\Lambda<1$, up to a shift we may assume that
\[ 
C \, \Lambda^{s_h} \leq \frac{1}{4} , \quad \forall h\in \N.
\]
If $r_h\leq 1$ then
\[
r_{h+1} \leq \frac{1}{4} (r_h + r_h^K) \leq \frac{1}{4} ( r_h + r_h) = \frac{1}{2} r_h.
\]
The above fact implies that it is enough to show that for any initial value $r$ there is a $h\in \N$ such that $r_h \leq 1$. If we set $\rho_h = \log r_h$, we have
\[
\rho_{h+1} = \log C + s_h \log \Lambda + \rho_h + \log (1+ r_h^{K-1}).
\]
Since $\log (1+t)\leq 1 + \log t$ for every $t\geq 1$, if we assume that $r_h\geq 1$ we deduce that
\[
\rho_{h+1} \leq \log C + s_h \log \Lambda + \rho_h + 1 + (K-1) \log r_h 
 =K\, \rho_h + 1 + \log C + s_h \log \Lambda.
\]
The above inequality implies that, if $r_j \geq 1$ for every $j=0,\dots,h-1$ then
\[
\rho_h \leq K^h \rho_0 + \sum_{j=0}^{h-1} K^{h-j-1} ( 1 + \log C + s_j \log \Lambda) 
\leq K^h \left( \rho_0 + \frac{|1 + \log C|}{K-1} + \frac{\log \Lambda}{K} \sum_{j=0}^{h-1} K^{-j}  s_j \right),
\]
where we have estimated the finite sum of the $K^{-j}$'s by the sum of the corresponding series, using the hypothesis $K>1$. Since $\log \Lambda<0$, (\ref{diverge}) implies that  the latter quantity tends to $-\infty$ for $h\rightarrow +\infty$. This proves that there is a number $h\in \N$ such that $\rho_h \leq 0$, and hence $r_h \leq 1$, concluding the first part of the proof.

There remains to show that condition (\ref{diverge}) is necessary. From the inequality
\[
r_{h+1} \geq C \, \Lambda^{s_h} r_h^K, \quad \forall h\in \N,
\]
we deduce
\[
r_h \geq C^{\sum_{j=0}^{h-1} K^j} \Lambda^{\sum_{j=0}^{h-1} K^{h-j-1} s_j} r_0^{K^h} = C^{K^h/(K-1)} \Lambda^{K^{h-1} \sum_{j=0}^{h-1} K^{-j} s_j} r_0^{K^h}. 
\]
If the series $\sum_j K^{-j} s_j$ converges, the latter quantity tends to $+\infty$ when $r_0$ is large enough. The fact that $(r_h)$ is infinitesimal for every value of $r_0$ then implies (\ref{diverge}).
\end{proof} 

\begin{proof}[Proof of Theorem \ref{treni}] 
Notice that, by (\ref{lunghi}), the sequence $s_h := m_{h+1} - m_h$ diverges and that, summing by parts, 
\[
\sum_{h\in \N} K^{-h} s_h \geq (K-1)\sum_{h\in \N} K^{-h} m_h -K m_0=+\infty.
\]
Let $z\in\C^d$. By Lemma \ref{triang}, there holds
\[
|T_{h+1}f_{m_{h+1},m_h}(z)|\leq C\Lambda^{s_h}(|z|+|z|^K), \qquad \forall h\in \N,
\]
where $C:=C_0 \sup_h\Vert T_h\Vert$. Therefore the sequence
\[
z_h:=T_{h}f_{m_h ,m_{h-1} } \circ T_{h-1}f_{m_{h-1},m_{h-2}}\circ\dots\circ T_{1}f_{m_1,m_{0}}( T_0 z) 
\]
satisfies $|z_h|\le r_h$, where the sequence $ r_h$ is defined by (\ref{raggetti}) with $r:=|T_0z|$ and is infinitesimal by Lemma \ref{tozz0}. For any $n\in\N$ with $m_h<n\leq m_{h+1}$, Lemma \ref{triang} implies
\[
|g_{n,0}(z)|=|f_{n,m_h}(z_h)|\le C_0 \Lambda^{n-m_h} (r_h+r_h^K)\to0\quad \mbox{as }n\to \infty,
\]
so $z$ belongs to the basin of attraction of the origin with respect to $(f_n)$, which is therefore the whole $\C^d$.
\end{proof} 

\section{The abstract basin of attraction}
\label{abasec}

The aim of this section is to recall some definitions and results from \cite[Section 5]{aam11}. 
Let $\mathscr{G}$ be the category whose objects are the sequences
\[
f = (f_n : U_n \rightarrow U_{n+1})_{n\in \N}
\]
of injective holomorphic  maps between $d$-dimensional complex manifolds and whose morphisms $h\colon f\rightarrow g$ are sequences of injective holomorphic maps 
\[
h = (h_n \colon   U_n \rightarrow V_n)_{n\in \N}\; , \quad \mbox{with } U_n = \dom f_n, \; V_n = \dom g_n ,
\]
such that for every $n\in \N$ the diagram
\begin{equation*}
\begin{CD}
U_n @>{f_n}>> U_{n+1} \\ @V{h_n}VV @VV{h_{n+1}}V \\ V_n @>{g_n}>> V_{n+1}
\end{CD}
\end{equation*}
commutes. In other words, $\mathscr{G}$ is the category of functors $\mathrm{Fun} (\N,\mathscr{M})$, where $\mathscr{M}$ is the category of $d$-dimensional complex manifolds and injective holomorphic maps. 

We denote by $W$ the inductive limit functor 
\[
\mathop{\mathrm{Lim}}_{\longrightarrow} \colon  \mathscr{G} \rightarrow \mathscr{M}.
\]
That is, $Wf$ is the topological inductive limit of the sequence of maps $(f_n)$ with the induced holomorphic structure: Constructively, $Wf$ is the quotient of the set
\[
\left\{ (z_n)_{n\geq m} \in \prod_{n\geq m} U_n \; \Big| \; m\in \N\;, \; z_{n+1} = f_n(z_n) \; \forall n\geq m\right\} 
\]  
by identifying $z$ and $z'$ if $z_n=z'_n$ for $n$ large enough. 
The holomorphic structure is induced by the open inclusions
\[
f_{\infty,m} \colon  U_m \hookrightarrow Wf\;, \quad z \mapsto [(f_{n,m}(z))_{n\geq m}].
\] 
With the above representation, if $h\colon f\rightarrow g$ is a morphism in $\mathscr{G}$, $Wh$ is the injective holomorphic map 
\[
Wh ([(z_n)_{n\geq m}]) = [(h_n(z_n))_{n\geq m}].
\]
The following result, whose proof is immediate, turns our to be useful in order to identify $Wf$:

\begin{lem}
\label{biholo}
Let $h\colon f\rightarrow g$ be a morphism in $\mathscr{G}$. Then $Wh\colon  Wf \rightarrow Wg$ is surjective (hence a biholomorphism) if and only if for every $m\in \N$ and every $z\in V_m$ there exists $n\geq m$ such that $g_{n,m}(z)\in h_n(U_n)$.
\end{lem}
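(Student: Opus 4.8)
This is a formal diagram chase, so the plan is to first isolate the two structural facts about the inductive limit that make the statement essentially automatic, and then read off each implication. Fact 1: every point of $Wf$ has the form $f_{\infty,\ell}(y)$ for some $\ell\in\N$ and $y\in U_\ell$, because the open inclusions $f_{\infty,\ell}$ cover $Wf$ by construction; likewise for $Wg$. Fact 2: iterating the commuting squares that define the morphism $h$ gives $h_k\circ f_{k,\ell}=g_{k,\ell}\circ h_\ell$ for all $k\geq\ell$, hence $Wh\circ f_{\infty,\ell}=g_{\infty,\ell}\circ h_\ell$; I will also use the tautologies $f_{\infty,n}\circ f_{n,m}=f_{\infty,m}$ and $g_{\infty,n}\circ g_{n,m}=g_{\infty,m}$, which hold because passing from the index $m$ to the index $n$ only replaces a representing sequence by one of its tails. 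For the parenthetical ``hence a biholomorphism'' I would note separately that $Wh$ is injective, since each $h_n$ is injective (if $Wh([(z_n)])=Wh([(z'_n)])$, reduce to a common base index and use $h_n(z_n)=h_n(z'_n)\Rightarrow z_n=z'_n$ for large $n$), and that an injective holomorphic map between complex manifolds of the same dimension $d$ is open; thus once $Wh$ is surjective it is a holomorphic bijection that is open, i.e.\ a biholomorphism.

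For the implication from the covering condition to surjectivity: take an arbitrary point of $Wg$ and, using Fact 1, write it as $g_{\infty,m}(z)$ with $z\in V_m$. By hypothesis choose $n\geq m$ with $w:=g_{n,m}(z)\in h_n(U_n)$, say $w=h_n(y)$ with $y\in U_n$. Then, by Fact 2 and the tautologies, $Wh\big(f_{\infty,n}(y)\big)=g_{\infty,n}(h_n(y))=g_{\infty,n}(w)=g_{\infty,n}\big(g_{n,m}(z)\big)=g_{\infty,m}(z)$, so the given point is hit and $Wh$ is surjective. Conversely, assume $Wh$ is surjective; fix $m\in\N$ and $z\in V_m$. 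Surjectivity produces $\xi\in Wf$ with $Wh(\xi)=g_{\infty,m}(z)$, and by Fact 1 we may write $\xi=f_{\infty,\ell}(y)$, so that $g_{\infty,\ell}(h_\ell(y))=Wh(\xi)=g_{\infty,m}(z)$ in $Wg$. By the constructive description of the inductive limit, two elements represented by the sequences $(g_{k,\ell}(h_\ell(y)))_{k\geq\ell}$ and $(g_{k,m}(z))_{k\geq m}$ are equal if and only if these sequences agree for all sufficiently large $k$; pick such an $n\geq\max(\ell,m)$. Then $g_{n,m}(z)=g_{n,\ell}(h_\ell(y))=h_n\big(f_{n,\ell}(y)\big)\in h_n(U_n)$, which is exactly the asserted condition, since $n\geq m$ and $f_{n,\ell}(y)\in U_n$.

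The only point needing genuine care is the ``if and only if'' for equality of points in the inductive limit used in the converse direction: one must be explicit that $[(z_n)]=[(z'_n)]$ precisely when the two representing tails eventually coincide, and must keep honest track of the base indices $m$, $\ell$, $n$ so that the eventually-agree statement is applied on a common range $k\geq\max(\ell,m)$. Everything else is bookkeeping with the symbols $f_{n,m}$, $g_{n,m}$, $h_n$ and the two structural facts above, so I do not anticipate any real difficulty beyond writing the index manipulations cleanly.
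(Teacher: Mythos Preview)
Your argument is correct and is precisely the routine inductive-limit diagram chase the paper has in mind; in fact the paper does not spell out a proof at all, merely remarking that the result ``whose proof is immediate'' is useful for identifying $Wf$. Your two structural facts and the handling of base indices in the converse direction are exactly what is needed, and your side remark on injectivity/openness justifies the parenthetical ``hence a biholomorphism''.
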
 

Let $B=B_1$ be the open unit ball about the origin in $\C^d$ and 
consider a sequence of injective holomorphic maps $f_n \colon  B \rightarrow B$ such that
\begin{equation}
\label{contra}
|f_n(z)| \leq \Lambda |z| , \qquad \forall z\in B, \; \forall n\in \N, 
\end{equation}
for some $\Lambda<1$. In this case, each map $f_n$ fixes the origin and the manifold $Wf$ may be considered as the abstract basin of attraction of the origin with respect to the sequence $f=(f_n)$. In fact, if in addiction the maps $f_n$ are restrictions of global automorphisms $g_n$ of $\C^d$, then $g_{\infty,n}\colon \C^d\to Wg$ are biholomorphisms and, in particular, $Wg$ can be identified with $\C^d$; through this identification, the induced holomorphic inclusion $Wf \hookrightarrow Wg\cong \C^d$ is the inclusion in $\C^d$ of the open set
\[
 \set{z\in \C^d}{g_{n,0}(z)\rightarrow 0 \mbox{ for } n \rightarrow \infty} ,
\]
which is the basin of attraction of the origin with respect to $g$, 
and the maps $f_{\infty,n}\colon B \rightarrow Wf$ coincide with $g_{\infty,n}|_B$. Notice also that an immediate application of Lemma \ref{biholo} implies that if $f$ satisfies (\ref{contra}), then for every $r<1$ the manifold $Wf$ is biholomorphic to the abstract basin of attraction of the restriction
\[
f_n|_{B_r} \colon  B_r \rightarrow B_r, \qquad n\in \N .
\] 
By a bounded sequence of holomorphic germs we mean a sequence of holomorphic maps
\[
h_n \colon  B_r \rightarrow \C^d\; , \qquad n\in \N\; ,
\]
defined on the same ball of radius $r$ about the origin and such that $h_n(B_r)$ is uniformly bounded. 
Under boundedness assumptions, the abstract basin of attraction is invariant with respect to non-autonomous conjugacies, as shown by the following result (see \cite[Lemma 5.2]{aam11}):

\begin{lem}
\label{conimpW}
Let $f=(f_n\colon B \rightarrow B)_{n\in \N}$ and  $g=(g_n\colon B \rightarrow B)_{n\in \N}$ be objects in $\mathscr{G}$ such that
\[
|f_n(z)|\leq \Lambda |z|\; , \quad |g_n(z)|\leq \Lambda |z| \;, \qquad \forall z\in B\;, \; \forall n\in \N\; ,
\]
for some $\Lambda<1$. Assume that there exist $r>0$ and a bounded sequence of holomorphic germs
\[
h_n \colon  B_r \rightarrow \C^d\; , \qquad n\in \N\; ,
\]
such that $h_n(0)=0$, $Dh_n(0)$ is unitary, and
\begin{equation}
\label{conjuqua}
h_{n+1} \circ f_n = g_n \circ h_n  \; , \qquad \forall n\in \N\; ,
\end{equation}
as germs at $0$. Then $Wf$ is biholomorphic to $Wg$.
\end{lem}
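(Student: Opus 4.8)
The plan is to deduce the statement from Lemma~\ref{biholo}, after replacing $f$ and $g$ by their restrictions to suitable small balls on which the germs $(h_n)$ actually assemble into a morphism of the category $\mathscr{G}$.

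\emph{Step 1 (uniform estimates on the $h_n$).} First I would extract from the hypotheses the control on the $h_n$ that is uniform in $n$. Since $(h_n)$ is a bounded sequence of holomorphic maps on $B_r$ with $h_n(0)=0$, the Cauchy estimates on second derivatives provide a constant $L>0$, independent of $n$, with
\[
|h_n(z)-Dh_n(0)z|\le L\,|z|^2,\qquad \forall z\in B_{r/2},\ \forall n\in\N.
\]
As each $Dh_n(0)$ is unitary, the normalized map $\phi_n:=Dh_n(0)^{-1}\circ h_n$ satisfies $\phi_n(0)=0$, $D\phi_n(0)=I$ and $|\phi_n(z)-z|\le L|z|^2$, so the quantitative inverse function theorem yields a radius $\rho\in(0,r/2]$, \emph{independent of $n$}, with $L\rho<1/2$, such that $\phi_n$ maps $B_\rho$ biholomorphically onto an open set lying between $B_{\rho/2}$ and $B_{2\rho}$. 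Since $Dh_n(0)$ is a linear isometry, it follows that every $h_n$ is injective on $B_\rho$ and
\[
B_{\rho/2}\subset h_n(B_\rho)\subset B_{2\rho};
\]
shrinking $\rho$ we may also assume $2\rho<1$. I expect this to be the only non-formal part of the proof: it is exactly here that the normalizations $h_n(0)=0$, $Dh_n(0)$ unitary, and the boundedness of $(h_n)$ are used to prevent the domains of injectivity and the images $h_n(B_\rho)$ from shrinking to the origin as $n\to\infty$.

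\emph{Step 2 (a morphism of restricted sequences).} Put $\rho':=2\rho<1$. Because $f_n,g_n$ are injective and $|f_n(z)|\le\Lambda|z|$, $|g_n(z)|\le\Lambda|z|$ on $B$, the sequences $f^\rho:=(f_n|_{B_\rho}\colon B_\rho\to B_\rho)_{n\in\N}$ and $g^{\rho'}:=(g_n|_{B_{\rho'}}\colon B_{\rho'}\to B_{\rho'})_{n\in\N}$ are objects of $\mathscr{G}$. The conjugacy identity $h_{n+1}\circ f_n=g_n\circ h_n$ holds by hypothesis only as germs at $0$; however $f_n(B_\rho)\subset B_{\Lambda\rho}\subset B_r=\dom h_{n+1}$ and $h_n(B_\rho)\subset B_{\rho'}\subset B=\dom g_n$, so both compositions are holomorphic on the connected set $B_\rho$ and agree near $0$, hence on all of $B_\rho$ by the identity principle. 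Together with $h_n(B_\rho)\subset B_{\rho'}$ and the injectivity established in Step 1, this shows that $h:=(h_n|_{B_\rho})_{n\in\N}$ is a morphism $f^\rho\to g^{\rho'}$ in $\mathscr{G}$.

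\emph{Step 3 (surjectivity on inductive limits and conclusion).} It remains to apply Lemma~\ref{biholo} to $h$. Fix $m\in\N$ and $z\in B_{\rho'}$; since $|g_{n,m}(z)|\le\Lambda^{n-m}|z|\le\Lambda^{n-m}\rho'\to0$, there is $n\ge m$ with $g_{n,m}(z)\in B_{\rho/2}\subset h_n(B_\rho)$, which is precisely the criterion in Lemma~\ref{biholo}; hence $Wh\colon Wf^\rho\to Wg^{\rho'}$ is a biholomorphism. Finally, by the observation recorded just before the statement — itself an immediate application of Lemma~\ref{biholo} — the inductive limit of a sequence of $\Lambda$-contractions of $B$ is unchanged under restriction of all maps to any concentric ball of radius $<1$; therefore $Wf$ is biholomorphic to $Wf^\rho$ and $Wg$ to $Wg^{\rho'}$, and chaining the three biholomorphisms gives $Wf\cong Wg$.
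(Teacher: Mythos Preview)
Your argument is correct. The paper does not actually prove this lemma here but defers to \cite[Lemma 5.2]{aam11}; your reconstruction is exactly the natural one and matches what is needed: use the boundedness of $(h_n)$ together with $h_n(0)=0$ and $Dh_n(0)$ unitary to get, via Cauchy estimates and a quantitative inverse function theorem, a uniform radius $\rho$ on which every $h_n$ is injective with $B_{\rho/2}\subset h_n(B_\rho)\subset B_{2\rho}$; extend the germ identity \eqref{conjuqua} to $B_\rho$ by analytic continuation; and then apply Lemma~\ref{biholo} both to show $Wh$ is onto and to identify $Wf\cong Wf^\rho$, $Wg\cong Wg^{\rho'}$.
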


We conclude this section by stating a result which says that two bounded sequences of germs which are conjugated (in the non-autonomous sense) as jets of a sufficiently high degree are actually conjugated as germs (see \cite[Theorem A.1]{aam11}):

\begin{thm}
\label{volata}
Let $(f_n)$ and $(g_n)$ be two bounded sequences of germs at $0$ of holomorphic self-maps of $\C^d$, such that $f_n(0)=g_n(0)=0$ for every $n\in \N$. Assume that the sequence of linear endomorphisms $(Df_n(0))$
is $(\Lambda,M)$-pinched for some $\Lambda < 1 < M$. Let $k$ be a positive integer such that
\[
\Lambda^{k+1} M < 1\;.
\]
Assume that the $k$-jets of $(f_n)$ and $(g_n)$ are boundedly conjugated, meaning that there exists a bounded sequence of polynomial maps $H_n \colon  \C^d \rightarrow \C^d$, $n\in \N$, of degree at most $k$ with $(DH_n(0)^{-1})$ bounded, such that
\begin{equation}
\label{kconju}
H_{n+1} \circ f_n  =  g_n \circ H_n \quad \mbox{as }k \mbox{-jets, } \qquad \forall n\in \N\;.
\end{equation}
Then $(f_n)$ and $(g_n)$ are boundedly conjugated as germs: There exists a bounded sequence of germs $(h_n)$ such that each germ $h_n$ is invertible, the sequences of inverses $(h_n^{-1})$ is also bounded, the $k$-jet of $h_n$ is $H_n$, and 
\begin{equation}
\label{gconju}
h_{n+1} \circ f_n = g_n \circ h_n\; , \qquad \forall n\in \N\; ,
\end{equation}
as germs. 
\end{thm}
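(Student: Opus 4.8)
The plan is to produce the germ conjugacy as the fixed point of a contraction, after first reducing to the case in which $f_n$ and $g_n$ already agree up to order $k$. Since $(H_n)$ is a bounded sequence of polynomial maps of degree at most $k$ with $(DH_n(0))$ and $(DH_n(0)^{-1})$ bounded, each $H_n$ is invertible as a germ at $0$ on a ball of some fixed radius, with a uniformly bounded inverse germ (quantitative inverse function theorem). Replacing $(g_n)$ by $(H_{n+1}^{-1}\circ g_n\circ H_n)$ --- again a bounded sequence of germs fixing $0$ --- the hypothesis (\ref{kconju}) says precisely that $f_n$ and the new $g_n$ have the same $k$-jet; in particular $Dg_n(0)=Df_n(0)=:L_n$, so $(Dg_n(0))$ is again $(\Lambda,M)$-pinched. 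If one solves the germ conjugacy equation for such a pair with a bounded sequence $(\hat h_n)$ of germs that are invertible with bounded inverse germs and have $k$-jet the identity, then $h_n:=H_n\circ\hat h_n$ meets all the requirements. Hence from now on $H_n=\mathrm{id}$.

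Write $f_n=P_n+\rho_n$ and $g_n=P_n+\tau_n$, where $P_n$ is the common $k$-jet (a polynomial of degree $\le k$ with $DP_n(0)=L_n$) and $\rho_n,\tau_n$ vanish to order $k+1$ and are uniformly bounded on a fixed ball about $0$. Looking for $h_n=\mathrm{id}+\phi_n$ with $\phi_n$ vanishing to order $k+1$, the conjugacy equation $h_{n+1}\circ f_n=g_n\circ h_n$ rearranges into the form $\mathscr{L}\phi=\mathcal{N}(\phi)$, where
\[
(\mathscr{L}\phi)_n := \phi_{n+1}\circ L_n - L_n\circ\phi_n
\]
is the ``conjugation by the linear part'' operator on sequences of germs, and $\mathcal{N}(\phi)_n$ gathers the remainder: the source $-\rho_n+\tau_n\circ(\mathrm{id}+\phi_n)$, the Taylor remainder $P_n\circ(\mathrm{id}+\phi_n)-P_n-L_n\circ\phi_n$, and the correction $\phi_{n+1}\circ L_n-\phi_{n+1}\circ f_n$.

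The crucial point is that $\mathscr{L}$ is invertible on the Banach space $X$ of uniformly bounded sequences of holomorphic maps $B_r\to\C^d$ vanishing to order $k+1$ (with the sup norm), and that $\|\mathscr{L}^{-1}\|$ is bounded by a constant independent of $r$. Indeed, with $(T\phi)_n:=L_n^{-1}\circ\phi_{n+1}\circ L_n$ one has $(\mathscr{L}\phi)_n=-L_n\circ\bigl((\mathrm{id}-T)\phi\bigr)_n$ and $(T^m\phi)_n=L_{n+m,n}^{-1}\circ\phi_{n+m}\circ L_{n+m,n}$; after passing to the adapted norms $|||v|||_n:=\sup_{m\ge n}\Lambda^{-(m-n)}|L_{m,n}v|$ on $\C^d$ --- uniformly equivalent to the Euclidean one because the pinching constant is finite, and making each $L_n$ a genuine $\Lambda$-contraction --- the Schwarz-type bound $|\phi_{n+m}(w)|\le\|\phi_{n+m}\|_{B_r}(|w|/r)^{k+1}$ for a germ flat to order $k+1$ yields $\|T^m\|\le(\Lambda^{k+1}M_*)^m\to0$, since $\Lambda^{k+1}M<1$; hence $\mathrm{id}-T$ is invertible by a Neumann series whose norm does not depend on $r$. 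On the other hand, because $\phi_n$, $\rho_n$ and $\tau_n$ all vanish to order $k+1$, a routine estimate on $B_r$ gives $\|\mathcal{N}(\phi)\|\le a(r)+b(r)\|\phi\|+c\|\phi\|^2$ with $a(r),b(r)\to0$ as $r\to0$ and $c$ a fixed constant (the source is $O(r^{k+1})$; the correction and the $\phi$-linear part of the Taylor remainder each carry an extra factor $r$, using $f_n-L_n=O(|z|^2)$ and $DP_n-L_n=O(|z|)$; the rest of the Taylor remainder is $O(\|\phi\|^2)$). Fixing $r$ small enough that $\|\mathscr{L}^{-1}\|\,b(r)\le\tfrac12$ and setting $\delta:=4\|\mathscr{L}^{-1}\|\,a(r)$, the map $\mathscr{L}^{-1}\circ\mathcal{N}$ maps $\{\|\phi\|\le\delta\}\subset X$ into itself and is a contraction there, so Banach's fixed point theorem produces $\phi\in X$ with $\mathscr{L}\phi=\mathcal{N}(\phi)$; then $h_n=\mathrm{id}+\phi_n$ solves the conjugacy equation as germs. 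Since $\delta=O(r^{k+1})$, Cauchy estimates make $\|D\phi_n\|$ small on $B_{r/2}$, so each $h_n$ is a biholomorphism of a fixed ball about $0$ onto its image with uniformly bounded inverse germ, and its $k$-jet is the identity; composing with $(H_n)$ undoes the reduction and finishes the proof.

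I expect the main obstacle to be keeping the two halves of the fixed-point argument compatible, which is exactly where the hypothesis $\Lambda^{k+1}M<1$ enters: the invertibility of $\mathscr{L}$ rests on the geometric gain $\Lambda^{m(k+1)}$ coming from flatness of order $k+1$ combined with the contraction rate $\Lambda$ (hence on $\Lambda^{k+1}M<1$, and on working in the adapted metric so that no $r$-dependent constant creeps into $\|\mathscr{L}^{-1}\|$), whereas the contraction estimate for $\mathcal{N}$ forces $r$ to be small; the estimates must therefore be organized so that $\|\mathscr{L}^{-1}\|$ stays bounded as $r\to0$.
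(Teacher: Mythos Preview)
Your sketch is sound. Note that the paper does not actually prove Theorem~\ref{volata}: it quotes \cite[Theorem~A.1]{aam11} for the special case $Dg_n(0)=Df_n(0)$, $DH_n(0)=I$, and the Remark following the statement explains only the reduction to that case, namely by replacing $(g_n)$ with $(DH_{n+1}(0)^{-1}\circ g_n\circ DH_n(0))$. Your reduction is slightly different: you conjugate by the full polynomial germ $H_n$ rather than just its linear part, which buys you the stronger normalization that $f_n$ and the new $g_n$ share the \emph{entire} $k$-jet (so you can take $\phi_n$ flat of order $k+1$ from the outset), at the cost of invoking a quantitative inverse function theorem for $H_n$. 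Both reductions work; yours leads to a cleaner fixed-point problem.

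The core of your argument --- inverting $\mathscr{L}$ by the Neumann series for $T$, with $\|T^m\|\lesssim(\Lambda^{k+1}M)^m$ thanks to the Schwarz bound on germs flat of order $k+1$, and then closing the contraction for $\mathcal{N}$ by taking $r$ small --- is correct and is the standard way such results are proved. One small remark: the adapted norms give you $|||L_{m,n}v|||_m\le\Lambda^{m-n}|||v|||_n$ with constant $1$, but the expansion bound $\|L_{m,n}^{-1}\|$ still carries the pinching constant $C$ (the adapted norm does not kill it), so strictly $\|T^m\|\le C'(\Lambda^{k+1}M)^m$ rather than $(\Lambda^{k+1}M_*)^m$; this is harmless for the Neumann series since the sum still converges. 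Similarly, the estimates for $\mathcal{N}$ require working on nested balls (say $B_{r/2}\subset B_r$) so that Cauchy bounds on $D\phi_{n+1}$ are available where you need them and so that $f_n(B_{r/2})\subset B_r$; you should make this explicit when you write it out, but it is routine.
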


\begin{rem}
As a consequence of (\ref{kconju}), we have the identity
\[
Dg_{n,m}(0) = DH_n(0) \circ D f_{n,m}(0) \circ D H_m(0)^{-1}, \qquad \forall n\geq m \geq 0,
\]
which implies that also the sequence $(Dg_n(0))$ is $(\Lambda,M)$-pinched.
Actually, in \cite[Theorem A.1]{aam11} this theorem is stated and proved under the extra assumptions that $Dg_n(0)=Df_n(0)$ and $DH_n(0)=I$. In this case, the conjugacy $(h_n)$  which one obtains satisfies $Dh_n(0)=I$, and the boundedness of $(h_n^{-1})$ follows from that of $(h_n)$. The more general case stated here follows immediately by applying the more particular statement to the sequences $(f_n)$ and $(DH_{n+1}(0)^{-1} \circ g_n \circ DH_n (0))$, which do have the same linear part and are conjugated as $k$-jets by polynomial mappings tangent to the identity at $0$.
\end{rem} 

\section{Proof of the main theorem}

We are finally ready to prove the main result of this paper:

\begin{thm}
\label{principale}
Let $B$ be the unit ball about the origin in $\C^d$ and let 
\[
f = (f_n : B \rightarrow B)_{n\in \N}
\]
be a sequence of holomorphic maps such that
\begin{equation}
\label{pincio}
M^{-1} |z| \leq |f_n(z)| \leq \Lambda |z|, \qquad \forall z\in B, \; \forall n\in \N,
\end{equation}
where $0<\Lambda<1<M$. Assume that the bunching condition
\begin{equation}
\label{pincio2}
\Lambda^{2+\epsilon(d)} M < 1,
\end{equation}
holds, where $\epsilon(2)=1/14$ and
\begin{equation}
\label{epsi}
\epsilon(d) := \frac{2^{d-1}-1}{2^{2d(d-1)^2} - 2^{d(d-1)^2} - 2^{d-1} + 1}, \qquad \forall d\geq 3. 
\end{equation}
Then the abstract basin of attraction $Wf$ is biholomorphic to $\C^d$.
\end{thm}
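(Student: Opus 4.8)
The plan is to feed the $2$-jets of $(f_n)$ into Proposition \ref{con2jets}, then correct for the growth of the resulting conjugacy by a non-autonomous rescaling, and finally identify $Wf$ with $\C^d$ via Theorem \ref{treni}, Theorem \ref{volata} and Lemma \ref{conimpW}. If $\beta:=\log M/(-\log\Lambda)<2$ the maps $f_n$ are holomorphically linearizable and the conclusion is classical (\cite[Theorem 3]{jv02}), so we may assume $\beta\ge2$, i.e.\ $\Lambda^2M\ge1$. Differentiating the iterated estimates $|f_{k,h}(z)|\le\Lambda^{k-h}|z|$ and $M^{-(k-h)}|z|\le|f_{k,h}(z)|$ at $0$ shows that $(Df_n(0))$ is $(\Lambda,M)$-pinched (with constant $1$) and that the $2$-jets of $(f_n)$ form a bounded sequence of $2$-jets of invertible self-maps fixing the origin. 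A direct computation, writing $D:=2^{d-1}$ and using $d(d-1)=2\binom{d}{2}$, shows that for $d\ge3$ the bunching hypothesis (\ref{pincio2}) with $\epsilon(d)$ as in (\ref{epsi}) is \emph{equivalent} to the convergence condition (\ref{conve}) of Proposition \ref{con2jets} (the case $d=2$, with $\epsilon(2)=1/14$, is verified directly), and that it also implies the weaker inequality $(\Lambda^2M)^{2(D^{3\binom{d}{2}}-1)}\Lambda^3M<1$. Thus Proposition \ref{con2jets} applies and produces $2$-jets $(h_n),(g_n)$ with $Dh_n(0)$ unitary, $h_{n+1}\circ f_n=g_n\circ h_n$ as $2$-jets, $U_{\tau_n}g_nU_{\tau_n}^{-1}$ an upper triangular degree-$2$ polynomial automorphism, and $\|h_n\|,\|g_n\|\le C\,\Theta^{(D^{3\binom{d}{2}}-1)n}$ for every $\Theta>\Lambda^2M$. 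We fix such a $\Theta$ close enough to $\Lambda^2M$ that $R:=\Theta^{D^{3\binom{d}{2}}-1}>1$ satisfies $R^2\Lambda^3M<1$; since $\Lambda M\ge1$ this forces $(R\Lambda)^2\le R^2\Lambda^3M<1$, hence $R\Lambda<1$. Finally let $G_n:=U_{\tau_n}^{-1}\bigl(U_{\tau_n}g_nU_{\tau_n}^{-1}\bigr)U_{\tau_n}$ be the degree-$2$ polynomial automorphism of $\C^d$ whose $2$-jet is $g_n$.

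The heart of the proof is a non-autonomous rescaling by the dilations $\phi_n(z):=R^{-n}z$. Set $\tilde f_n:=\phi_{n+1}^{-1}\circ f_n\circ\phi_n$, $\tilde h_n:=\phi_n^{-1}\circ h_n\circ\phi_n$ and $\hat G_n:=\phi_{n+1}^{-1}\circ G_n\circ\phi_n$. As $\phi_n$ is a scalar multiple of the identity it commutes with the permutation matrices $U_{\tau_n}$, so $\hat G_n=U_{\tau_n}^{-1}S_nU_{\tau_n}$ with $S_n:=\phi_{n+1}^{-1}\bigl(U_{\tau_n}g_nU_{\tau_n}^{-1}\bigr)\phi_n$ again special triangular of degree $2$. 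Because $R^{-n}$ exactly cancels the $R^n$-growth of the quadratic coefficients while the linear parts get multiplied by $R$, the rescaling achieves: (i) $(\tilde h_n)$ is a \emph{bounded} sequence of polynomial maps of degree $\le2$ with unitary linear parts; (ii) $(S_n)$ has uniformly bounded coefficients and linear parts contracting at the rate $R\Lambda<1$; (iii) $(D\tilde f_n(0))$ is $(R\Lambda,M/R)$-pinched, with $(R\Lambda)^{3}(M/R)=R^2\Lambda^3M<1$; while the $2$-jet conjugacy persists: $\tilde h_{n+1}\circ\tilde f_n=\hat G_n\circ\tilde h_n$ as $2$-jets. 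Moreover $\tilde f_n$ is defined on $B_{R^n}$, maps into $B_{R^{n+1}}$ and contracts with rate $R\Lambda$, and the biholomorphisms $\phi_n\colon B_{R^n}\to B$ intertwine $\tilde f$ and $f$, thus forming an isomorphism in the category $\mathscr G$ of Section \ref{abasec}; hence $Wf\cong W\tilde f$, and likewise $WG\cong W\hat G$. The dilation of the domains from $B$ to $B_{R^n}$ is precisely what offsets the fact that $\phi_n\to0$, and is the reason this step must be run through $\mathscr G$ and the functor $W$ rather than through honest contractions of a fixed ball.

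It then remains to identify $W\hat G$ with $\C^d$ and to promote the $2$-jet conjugacy to a conjugacy of germs. Conjugating the automorphism sequence $\hat G_n=U_{\tau_n}^{-1}S_nU_{\tau_n}$ by the unitaries $U_{\tau_n}$ and shifting each inserted permutation onto the next factor puts it into the form required by Theorem \ref{treni}: the special triangular sequence $(S_n)$, of stable degree $K=2^{d-1}=D$, composed at the times $m_h=D^h$ with bounded linear automorphisms. Since $m_{h+1}-m_h=D^h(D-1)\to\infty$ and $\sum_hK^{-h}m_h=\sum_h1=+\infty$, Theorem \ref{treni} gives that every $\hat G$-orbit converges to $0$, so $W\hat G=\C^d$. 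On a sufficiently small ball $B_\rho$ the germs $(\tilde f_n|_{B_\rho})$ and $(\hat G_n|_{B_\rho})$ are bounded and contract by one uniform rate $<1$, and their $2$-jets are conjugated by the bounded sequence $(\tilde h_n)$, whose linear parts are unitary (hence also boundedly invertible); as $(D\tilde f_n(0))$ is $(R\Lambda,M/R)$-pinched with $(R\Lambda)^{2+1}(M/R)<1$, Theorem \ref{volata} with $k=2$ upgrades this to a bounded germ conjugacy with invertible, unitary-linear-part terms, so $W(\tilde f|_{B_\rho})\cong W(\hat G|_{B_\rho})$ by Lemma \ref{conimpW}. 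Since a uniformly contracting sequence has the same abstract basin as its restriction to a smaller ball (an immediate consequence of Lemma \ref{biholo}), we conclude $Wf\cong W\tilde f\cong W(\tilde f|_{B_\rho})\cong W(\hat G|_{B_\rho})\cong W\hat G=\C^d$.

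The main obstacle is the rescaling step. The single exponent governing $R$ must be chosen so that simultaneously it absorbs the $R^n$-growth of the conjugacy $(h_n)$ furnished by Proposition \ref{con2jets}, it keeps the worsened pinching $(R\Lambda,M/R)$ inside the range where a conjugacy of $2$-jets extends to a conjugacy of germs (Theorem \ref{volata} with $k=2$, i.e.\ new bunching parameter $<3$), and it keeps $R\Lambda<1$ so that the contracting structure survives; and one must check that conjugating by the \emph{unbounded} sequence $\phi_n\to0$ does not enlarge the abstract basin, which works only because the domains dilate in step with $\phi_n$. Everything else is either bookkeeping (the equivalence of (\ref{pincio2}) and (\ref{conve}), the identification of $\hat G$ with a sequence covered by Theorem \ref{treni}) or a direct appeal to the results of the preceding sections.
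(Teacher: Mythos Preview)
Your proposal is correct and follows essentially the same route as the paper's proof: verify that the bunching hypothesis (\ref{pincio2}) implies both (\ref{conve}) and the inequality $(\Lambda^2M)^{2(D^{3\binom{d}{2}}-1)}\Lambda^3M<1$, apply Proposition \ref{con2jets}, rescale non-autonomously by $R^n$, invoke Theorem \ref{treni} for the basin and Theorem \ref{volata} plus Lemma \ref{conimpW} for the conjugacy. The only visible difference is in the bookkeeping of the rescaling step: the paper restricts $\tilde f_n$ at once to a fixed small ball $B_r$ and then uses Lemma \ref{biholo} together with $R\Lambda<1$ to identify $W(\tilde f|_{B_r})$ with $Wf$, whereas you keep $\tilde f_n$ on the expanding balls $B_{R^n}$ so that the $\phi_n$ become honest isomorphisms in $\mathscr G$, and you only restrict to $B_\rho$ afterwards; both are equivalent and your version is arguably a bit cleaner. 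Your preliminary reduction to $\beta\ge 2$ is harmless (the paper simply uses $\Lambda M\ge 1$, which follows from pinching, in place of your $\Lambda^2M\ge1$), and your regrouping $U_{\theta_n}S_n\mapsto S_nU_{\theta_{n-1}}$ to fit Theorem \ref{treni} with $m_h=D^h$ is exactly the manipulation the paper performs tacitly when it says ``the sequence $(\tilde g_n)$ satisfies the assumptions of Theorem \ref{treni}''.
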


\begin{proof}
By the assumption (\ref{pincio}), the differentials of $f_n$ at $0$ satisfy
\[
\|Df_n(0)\|\leq \Lambda, \qquad \|Df_n(0)^{-1}\|\leq M,
\]
and in particular the sequence of linear automorphisms $(Df_n(0))$ is $(\Lambda,M)$-pinched. Let us check that the pair $(\Lambda,M)$ satisfies the assumption (\ref{conve}) of Proposition \ref{con2jets}. A simple algebraic manipulation gives
\[
\begin{split}
(\Lambda^2 M)^{D^{d(d-1)}} (\Lambda M)^{-\delta} &= (\Lambda^2 M)^{D^{d(d-1)}} (\Lambda M)^{- \frac{D-1}{D^{d(d-1)} -1}}\\  &= \left( \Lambda^{2+ \frac{D-1}{D^{2d(d-1)} - D^{d(d-1)} - D + 1}} M \right)^{\frac{D^{2d(d-1)}-D^{d(d-1)}- D + 1}{D^{d(d-1)} - 1}}.
\end{split}
\]
The outer exponent is positive, so the above quantity is less than 1 if and only if the number
\begin{equation}
\label{numb}
\Lambda^{2+ \frac{D-1}{D^{2d(d-1)} - D^{d(d-1)} - D + 1}} M
\end{equation}
is less than 1. The exponent of $\Lambda$ in the above number can be rewritten as
\[
2 + \frac{2^{d-1}-1}{2^{2d(d-1)^2} - 2^{d(d-1)^2} - 2^{d-1} + 1} ,
\]
so it coincides with $2+\epsilon(d)$ when $d\geq 3$, while for $d=2$ it coincides with $2+1/11$, which is larger than $2+\epsilon(2) = 2 + 1/14$. Then (\ref{pincio2}) implies that (\ref{numb}) is less than 1, as we wished to show.

The assumption of Proposition \ref{con2jets} being fulfilled, we obtain sequences of polynomial maps of degree at most 2, $(H_n)$ and $(g_n)$ such that:
\renewcommand{\theenumi}{\roman{enumi}}
\renewcommand{\labelenumi}{(\theenumi)}
\begin{enumerate}
\item $H_n(0)=g_n(0)=0$ and $DH_n(0)$ is unitary;
\item $H_{n+1} \circ f_n = g_n \circ H_n$ as 2-jets, for every $n\in \N$.
\item $U_{\tau_n} \circ g_n \circ U_{\tau_n}^{-1}$ is a special triangular automorphism of $\C^d$ of degree 2. 
\item for every $R > (\Lambda^2 M)^{2^{3d(d-1)^2/2} - 1}$ there is a number $C=C(R)$ such that
\[
\|H_n\|\leq C \, R^n, \qquad \|g_n\| \leq C \, R^n,
\]
for every $n\in \N$.
\end{enumerate}
We claim that we can find a positive number $R$ which satisfies the inequalities
\begin{eqnarray}
\label{ine1} 
R &>& (\Lambda^2 M)^{2^{3d(d-1)^2/2} - 1}, \\
\label{ine2}
R \,\Lambda  &<& 1, \\
\label{ine3}
R^2 \Lambda^3 M &<& 1.
\end{eqnarray}
Indeed, since $\Lambda M\geq 1$, we have
\[
(R \Lambda)^2 \leq R^2 \Lambda^2 ( \Lambda M) =   R^2 \Lambda^3 M,
\]
so (\ref{ine3}) implies (\ref{ine2}). Therefore, it is enough to show that there is some $R$ which satisfies (\ref{ine1}) and (\ref{ine3}), or equivalently that
\begin{equation}
\label{ine}
( \Lambda^2 M)^{2 (2^{3d(d-1)^2/2}-1)} \Lambda^3 M < 1.
\end{equation}
A simple algebraic manipulation shows that the left-hand side of (\ref{ine}) equals
\[
\left( \Lambda^{2+\frac{1}{2(2^{3d(d-1)^2/2}-1)}} M \right)^{2(2^{3d(d-1)^2/2}-1)+1}.
\]
Since the outer exponent is positive, the above quantity is less than 1 because of (\ref{pincio2}) and of the inequality
\[
\frac{1}{2(2^{3d(d-1)^2/2}-1)}\geq \epsilon(d), \qquad \forall d\geq 2,
\]
which is easy to check. Therefore, (\ref{ine}) holds.
 
Let us fix a number $R$ which satisfies (\ref{ine1}), (\ref{ine2}) and (\ref{ine3}), and let $C=C(R)$ be such that the estimates in (iv) hold. For every $n\in \N$ we rescale the maps $f_n$, $g_n$ and $H_n$ as follows:
\begin{eqnarray*}
\tilde{f}_n (z) &:=& R^{n+1} f_n ( R^{-n} z ), \\
\tilde{g}_n (z) &:=& R^{n+1} g_n ( R^{-n} z ), \\
\tilde{H}_n (z) &:=& R^n H_n ( R^{-n} z ).
\end{eqnarray*}
With such definitions, $\tilde{H}_n(0)=\tilde{g}_n(0)=0$, $D\tilde{H}_n(0) = D H_n(0)$ is unitary, and the identity
\begin{equation}
\label{coneq}
\tilde{H}_{n+1} \circ \tilde{f}_n = \tilde{g}_n \circ \tilde{H}_n , \qquad \forall n\in \N,
\end{equation}
holds in the space of 2-jets.

Let us study the properties of the sequence of holomorphic maps $(\tilde{f}_n)$. The differential of $\tilde{f}_n$ at $0$ is
\[
D\tilde{f}_n(0) = R \, Df_n(0),
\]
so it satisfies the estimates
\begin{equation}
\label{tildi}
\|D\tilde{f}_n(0)\| \leq R \,\Lambda =: \tilde{\Lambda} < 1, \qquad
\|D\tilde{f}_n(0)^{-1}\| \leq R^{-1} M =: \tilde{M},
\end{equation}
where we have used (\ref{ine2}).
By (\ref{pincio}), the Cauchy formula implies that $Df_n$ is uniformly bounded on $B_{1/2}$. Therefore
\[
D\tilde{f}_n(z) = R \, Df_n(R^{-n} z)
\]
is uniformly bounded on $B_{1/2}$, hence $(\tilde{f}_n)$ is a bounded sequence of germs. Fix a number $\hat\Lambda$ such that $\tilde{\Lambda}< \hat\Lambda < 1$.
Together with the first of the bounds in (\ref{tildi}), a further use of the Cauchy formula implies that there exists $0<r\leq 1$ such that for every $n\in \N$ 
\begin{equation}
\label{qw1}
|\tilde{f}_n(z)|\leq \hat\Lambda |z|\; ,\quad \forall z\in B_r\;.
\end{equation}
In particular, 
\[
\tilde{f} := (\tilde{f}_n|_{B_r} : B_r \rightarrow B_r)_{n\in \N}
\]
can be seen as an object of $\mathscr{G}$. The maps 
\[
\varphi_n\colon B_r \rightarrow B\;, \quad \varphi_n(z):= R^{-n} z\; , 
\]
define a morphism $\varphi\colon  \tilde{f} \rightarrow f$, which induces a holomorphic injection $W\varphi\colon  W\tilde{f} \rightarrow Wf$. By (\ref{pincio}) and since $\Lambda<R^{-1}$, for every $m\in \N$ there is a natural number $n\geq m$ so large that
\[ 
f_{n,m}(B) \subset B_{\Lambda^{n-m}} \subset B_{r\, R^{-n}} = \varphi_n(B_r)\;.
\]
Hence, Lemma \ref{biholo} implies that $W\varphi$ is a biholomorphism. Therefore, it is enough to show that $W\tilde{f}$ is biholomorphic to $\C^d$. 

On the other hand, $(\tilde{g}_n)$ is a sequence of polynomial maps of degree at most 2. By the conjugacy equation (\ref{coneq}) and since $D\tilde{H}_n(0)$ is unitary, $D\tilde{g}_n(0)$ is related to $D \tilde{f}_n(0)$ by left and right multiplication by unitary automorphisms, so by (\ref{tildi})
\begin{equation}
\label{line}
\|D\tilde{g}_n(0)\| = \|D \tilde{f}_n(0)\| \leq \tilde{\Lambda}, \qquad 
\|D\tilde{g}_n(0)^{-1}\| = \|D \tilde{f}_n(0)^{-1}\| \leq \tilde{M}, \qquad
\forall n\in \N.
\end{equation}
The 2-homogeneous part $\tilde{G}_n$ of $\tilde{g}_n$ is related to the 2-homogeneous part $G_n$ of $g_n$ by
\begin{equation}
\label{tildi2}
\tilde{G}_n (z) = R^{n+1} G_n ( R^{-n} z) = R^{n+1} R^{-2n} G_n(z) = R^{1-n} G_n(z),
\end{equation}
so (iv) implies that $(\tilde{G}_n)$ is bounded. Similarly, (iv) implies that the sequence of polynomial maps (of degree at most 2) $(\tilde{H}_n)$ is bounded.

By the boundedness of $(\tilde{G}_n)$, using also (\ref{line}), and up to the choice of a smaller $r>0$, the Cauchy formula implies that
\begin{equation}
\label{qw2}
|\tilde{g}_n(z)| \leq \hat\Lambda |z|\; , \quad \forall z\in B_r\; .
\end{equation}
In particular, $(\tilde{g}_n)$ defines a bounded sequence in $\mathscr{G}$. By (iii), (\ref{tildi2}), and since the sequence of permutations $(\tau_n)$ satisfies
\[
\tau_{n+1} \neq \tau_n \quad \mbox{if and only if } \quad n=2^{h(d-1)}-1 \mbox{ for some } h\in \N,
\]
the sequence $(\tilde{g}_n)$ satisfies the assumptions of Theorem \ref{treni}, with
\[
k_j = 2^{d-j} \quad \mbox{for }j=1,\dots, d, \qquad K = \max \{k_1,\dots, k_d\} = 2^{d-1}.
\]
Therefore, the basin of attraction of $0$ with respect to $(\tilde{g}_n)$ is the whole $\C^d$:
\begin{equation}
\label{bacino}
\set{ z\in \C^d}{\tilde{g}_{n,0}(z) \rightarrow 0 \mbox{ for } n\rightarrow \infty} = \C^d.
\end{equation}

Since by (\ref{ine3})
\[
\tilde{\Lambda}^{3} \tilde{M} = R^2 \Lambda^3 M <1\; ,
\]
by the bounds (\ref{tildi}), Theorem \ref{volata} implies that $(\tilde{f}_n)$ and $(\tilde{g}_n)$ are boundely conjugated as germs: there exists a bounded sequence $(\tilde{h}_n)\subset \mathscr{G}$ whose sequence of 2-jets is $(\tilde{H}_n)$ such that
\[
\tilde{h}_{n+1} \circ \tilde{f}_n = \tilde{g}_n \circ \tilde{h}_n , \qquad \forall n\in \N,
\]
as germs. Then, by (\ref{qw1}) and (\ref{qw2}), Lemma \ref{conimpW} implies that $W\tilde{f}$ is biholomorphic to $W\tilde{g}$.
Since
\[
W\tilde{g} \cong \set{z\in \C^d}{\tilde{g}_{n,0}(z)\rightarrow 0\mbox{ for } n\rightarrow \infty} = \C^d\; ,
\]
by (\ref{bacino}), we conclude that $W\tilde{f}$ is biholomorphic to $\C^d$ and hence so is $Wf$.
\end{proof}

We can restate the main theorem of the Introduction as a corollary of the above theorem. 

\begin{cor}
Let $f:X\rightarrow X$ be a holomorphic automorphism of a complex manifold and let $K\subset X$ be a compact hyperbolic invariant set with stable distribution $E_s$ of complex dimension $d$. Let $0<\Lambda<1<M$ and $C>0$ be such that
\begin{equation}
\label{lapin}
\max_{x\in K} \bigl\|Df^n(x)|_{E_s}\| \leq C \, \Lambda^n, \qquad 
\max_{x\in K} \bigl\|\bigl(Df^n(x)|_{E_s}\bigr)^{-1} \bigr\| \leq C\, M^n, \qquad \forall n\in \N.
\end{equation}
Assume that the bunching condition
\[
\beta:= \frac{\log M}{-\log \Lambda} < 2 + \epsilon(d)
\] 
holds, where $\epsilon(d)>0$ is defined in Theorem \ref{principale}. Then the stable manifold $W^s(x)$ of every $x\in K$ is biholomorphic to $\C^d$.
\end{cor}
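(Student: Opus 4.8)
The plan is to deduce the corollary from Theorem \ref{principale} by realizing $W^s(x)$ as an abstract basin of attraction of a sequence of holomorphic self-maps of a ball, after a renormalization that absorbs the constant $C$ of (\ref{lapin}) without spoiling the bunching. Accordingly, I would first invoke the holomorphic stable manifold theorem for compact hyperbolic sets (see e.g.\ \cite{shu87}, \cite{kh95}, and \cite{fs04}): there are $\rho_0>0$ and, for each $y\in K$, a holomorphic embedding $\psi_y\colon B_{\rho_0}\hookrightarrow X$ with $\psi_y(0)=y$ and $D\psi_y(0)$ an isomorphism onto the fibre of $E_s$ at $y$, parametrizing the local stable manifold $W^s_{\mathrm{loc}}(y)$, depending continuously on $y\in K$, and with $f$ mapping a neighbourhood of $y$ in $W^s_{\mathrm{loc}}(y)$ into $W^s_{\mathrm{loc}}(f(y))$. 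Since $f$ is an automorphism and $W^s(x)=\bigcup_{n\geq 0}f^{-n}\bigl(W^s_{\mathrm{loc}}(f^n(x))\bigr)$, reading this increasing union through the charts $\psi_{f^n(x)}$ exhibits $W^s(x)$, in the sense of Section \ref{abasec}, as the inductive limit of the sequence $\psi_{f^{n+1}(x)}^{-1}\circ f\circ\psi_{f^n(x)}$ of holomorphic maps, restricted to small balls; cf.\ \cite{fs04}. It thus suffices to manufacture from these maps a sequence satisfying the hypotheses of Theorem \ref{principale} with the same inductive limit.

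These maps need not be contractions, because $C\geq 1$ and a single iteration of $f$ may expand the stable distribution, so to fix this I would replace $f$ by the automorphism $f^N$ for a large $N$. The stable manifold of $x$ is unchanged, $W^s_{f^N}(x)=W^s(x)$; the set $K$ is hyperbolic for $f^N$ with rates $\Lambda^N$, $M^N$ and the same constant $C$; and the bunching parameter is unchanged, since $\log M^N/(-\log\Lambda^N)=\beta<2+\epsilon(d)$. Setting $F_y:=\psi_{f^N(y)}^{-1}\circ f^N\circ\psi_y$, compactness of $K$ yields a constant $\widetilde{C}\geq 1$, independent of $y$, with $F_y(0)=0$, $\|DF_y(0)\|\leq\widetilde{C}\Lambda^N$, $\|DF_y(0)^{-1}\|\leq\widetilde{C}M^N$, and a uniform bound on $F_y$ near the origin. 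Choose $N$ so large that $\widetilde{C}\Lambda^N<\tfrac12$ and
\[
\widetilde{C}^{3+\epsilon(d)}\bigl(\Lambda^{2+\epsilon(d)}M\bigr)^{N}<\tfrac12;
\]
this is possible because $\beta<2+\epsilon(d)$ is equivalent to $\Lambda^{2+\epsilon(d)}M<1$, so the left-hand side tends to $0$ as $N\to\infty$. Put $\Lambda':=\tfrac12\bigl(1+\widetilde{C}\Lambda^N\bigr)<1$ and choose $M'>\widetilde{C}M^N$ close enough to $\widetilde{C}M^N$ that $(\Lambda')^{2+\epsilon(d)}M'<1$. By the Cauchy estimates there is an $r>0$, independent of $y$, with $(M')^{-1}|z|\leq|F_y(z)|\leq\Lambda'|z|$ for all $z\in B_r$ and all $y\in K$.

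Rescaling $B_r$ to the unit ball, the maps $f_n(w):=r^{-1}F_{f^{Nn}(x)}(rw)$ form a sequence of holomorphic self-maps of $B$ with $(M')^{-1}|w|\leq|f_n(w)|\leq\Lambda'|w|$ and $(\Lambda')^{2+\epsilon(d)}M'<1$, so Theorem \ref{principale} yields $W(f_n)\cong\C^d$. On the other hand, the biholomorphisms $w\mapsto\psi_{f^{Nn}(x)}(rw)$ form an isomorphism in $\mathscr{G}$ between $(f_n)$ and the sequence of restrictions of $f^N$ to an increasing family of neighbourhoods of the points $f^{Nn}(x)$ inside their local stable manifolds; by Lemma \ref{biholo}, together with the remark in Section \ref{abasec} that shrinking the domains leaves the inductive limit unchanged, the inductive limit of the latter sequence is biholomorphic to $\bigcup_{n\geq 0}f^{-Nn}\bigl(W^s_{\mathrm{loc}}(f^{Nn}(x))\bigr)=W^s(x)$ via $z\mapsto f^{-Nn}(z)$ on the $n$-th term. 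Therefore $W^s(x)\cong\C^d$.

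The only real obstacle, once Theorem \ref{principale} is available, is the renormalization in the second paragraph: one must choose $N$ and then $r$ so that the maps $f_n$ are at once honest self-contractions of a ball — which forces $\widetilde{C}\Lambda^N$ and the quadratic error of $F_y$ on $B_r$ to be small — and still satisfy the bunching inequality $(\Lambda')^{2+\epsilon(d)}M'<1$. This is exactly where one uses the invariance of $\beta$ under passage to powers of $f$ together with the equivalence $\beta<2+\epsilon(d)\iff\Lambda^{2+\epsilon(d)}M<1$; the remaining ingredients are classical hyperbolic dynamics and the inductive-limit formalism recalled in Section \ref{abasec}.
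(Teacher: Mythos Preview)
Your overall strategy coincides with the paper's: reduce to Theorem \ref{principale} by passing to a high iterate $f^N$, reading $f^N$ in local stable-manifold charts to obtain a sequence of ball maps, and identifying $W^s(x)$ with the resulting abstract basin. The identification via Lemma \ref{biholo} and the use of the invariance of the bunching parameter under iteration are exactly as in the paper.

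There is, however, a genuine gap in your renormalization. You set $\Lambda':=\tfrac12(1+\widetilde{C}\Lambda^N)$, the midpoint between $\widetilde{C}\Lambda^N$ and $1$; this forces $\Lambda'\geq\tfrac12$ for every $N$. Hence $(\Lambda')^{2+\epsilon(d)}\geq 2^{-(2+\epsilon(d))}\geq\tfrac18$, while $M'>\widetilde{C}M^N\to\infty$ as $N\to\infty$. Consequently the inequality $(\Lambda')^{2+\epsilon(d)}M'<1$ that you need cannot hold once $N$ is large enough to satisfy your conditions $\widetilde{C}\Lambda^N<\tfrac12$ and $\widetilde{C}^{3+\epsilon(d)}(\Lambda^{2+\epsilon(d)}M)^N<\tfrac12$; a direct check with, say, $\Lambda=\tfrac12$, $M$ just below $4$, $\widetilde{C}=1$ already exhibits the failure. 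Your second displayed condition \emph{does} give the bunching for the pair $(\widetilde{C}\Lambda^N,\widetilde{C}M^N)$, so the fix is immediate: take $\Lambda'$ only slightly larger than $\widetilde{C}\Lambda^N$ rather than the midpoint with $1$ --- for instance $\Lambda'=(1+\delta)\widetilde{C}\Lambda^N$ and $M'=(1+\delta)\widetilde{C}M^N$ for small $\delta>0$ --- so that
\[
(\Lambda')^{2+\epsilon(d)}M'=(1+\delta)^{3+\epsilon(d)}\,(\widetilde{C}\Lambda^N)^{2+\epsilon(d)}(\widetilde{C}M^N)<1.
\]
The Cauchy-estimate step then still yields a uniform $r>0$ (now depending on $N$, which is harmless since $N$ has been fixed). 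With this correction your argument goes through and is essentially the paper's proof.
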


The argument which allows to deduce such a result from the non-autonomous version is standard, but we include it for sake of completeness.

\begin{proof} 
Since $\Lambda^{2+\epsilon(d)} M<1$, we can find positive numbers $1>\tilde{\Lambda}>\Lambda$ and $\tilde{M}>M$ such that 
\begin{equation}
\label{llll}
\tilde{\Lambda}^{2 + \epsilon(d)} \tilde{M} < 1.
\end{equation}
By (\ref{lapin}), up to the replacement of $f$ with a sufficiently hight iterate - an operation which does not change the stable manifolds - we may assume that
\begin{equation}
\label{lapin2}
\max_{x\in K} \bigl\|Df(x)|_{E_s}\| < \tilde{\Lambda}, \qquad 
\max_{x\in K} \bigl\|\bigl(Df(x)|_{E_s}\bigr)^{-1} \bigr\| < \tilde{M}.
\end{equation}
Fix $x\in K$.
By the local stable manifold theorem, we can find a positive number $r$ and
holomorphic embeddings 
\[
\varphi_n : B_r \hookrightarrow W^s ( f^n(x) ) = f^n(W^s(x)),
\]
with domain the ball of radius $r$ about 0 in $\C^d$, mapping 0 into $f^n(x)$, with $D\varphi_n(0)$ an isometry from $\C^d$ to $T_{f^n(x)} W^s(f^n(x))$, and
such that for each $y\in W^s(x)$ the point $f^n(y)$ belongs to $\varphi_n(B_r)$ for $n$ large enough. The identities
$f \circ \varphi_n = \varphi_{n+1} \circ  f_n$ define holomorphic maps $f_n : B_r \rightarrow B_r$ such that $f_n(0)=0$ and, by (\ref{lapin2}), 
\[
\sup_{n\in \N} \bigl\|Df_n(0)\bigr\|< \tilde{\Lambda} , \qquad \sup_{n\in \N} \bigl\|Df_n(0)^{-1}\bigr\| < \tilde{M}.
\]
Up to the choice of a smaller $r$, we may assume that
\[
\sup_{\substack{z\in B_r\\ n\in \N}} \bigl\|Df_n(z)\bigr\|\leq \tilde{\Lambda} , \qquad \sup_{\substack{z\in B_r\\ n\in \N}}  \bigl\|Df_n(z)^{-1}\bigr\| \leq \tilde{M},
\]
from which we get
\begin{equation}
\label{striz}
\tilde{M}^{-1} |z| \leq |f_n(z)| \leq \tilde{\Lambda} |z|, \qquad \forall z\in B_r, \; \forall n\in \N.
\end{equation}
The stable manifold $W^s(x)$ is biholomorphic to the
abstract basin of attraction of $0$ with respect to $(f_n)$ through the map which sends $y\in W^s(x)$ into the equivalence class of the sequence $(\varphi_n^{-1}(f^n(y)))_{n\geq n_0}$, where $n_0\in \N$ is so large that $f^n(y)\in \varphi_n(B_r)$ for every $n\geq n_0$. By (\ref{llll}) and (\ref{striz}), the conclusion follows from Theorem \ref{principale}
\end{proof}

\section{Concluding remarks}
\label{conrem}

The exponents $\epsilon(d)$ become very small already for small values of $d$: For instance, $\epsilon(3) = 1/5591039$. Here we wish to discuss some modifications of our argument which may improve the constants $\epsilon(d)$.

The main part of the proof of Theorem \ref{principale} consists in finding a  conjugacy $(H_n)$ at the level of 2-jets between the original sequence $(f_n)$  and a sequence of 2-jets $(g_n)$ such that
\[
U_{\sigma_h} \circ g_n \circ U_{\sigma_h}^{-1}
\]
is a special triangular automorphism for $m_h \leq n < m_{h+1}$ (in our proof, $m_h = 2^{(d-1)h}-1$ and $(\sigma_h)$ is the sequence of permutations of $\{1,\dots,d\}$ which is introduced in Lemma \ref{combi}). 

A first idea could be to try to optimize the choice of $(m_h)$ and $(\sigma_h)$ - which in our proof are fixed once and for all - to the specific form of the sequence $(f_n)$. In fact, in general the ``best'' pair of sequences $(\sigma_h)$, $(m_h)$, that is the one for which one obtains the existence of a solution $(H_n)$, $(g_n)$ of the 2-jet conjugacy equation with the lowest exponential growth, depends on the sequence of linear maps $(Df_n(0))$. However, it can be shown that the ``worst'' possible sequence $(Df_n(0))$ - the one for which the exponential growth of the least growing solution $(H_n)$, $(g_n)$
is largest among all the $(\Lambda,M)$-pinched sequences of linear automorphisms of $\C^d$ - requires sequences $(\sigma_h)$ and $(m_h)$  which would give the same exponential growth for the least growing solution $(H_n)$, $(g_n)$ associated to any $(\Lambda,M)$-pinched sequence $(Df_n(0))$. Therefore, fixing the sequences $(m_h)$ and $(\sigma_h)$ does not worsen the final result.

A first technical improvement comes from sharpening the estimates of Proposition \ref{con2jets}, which are indeed not optimal: The optimal estimates in this proposition can be derived by generalizing the estimate of Lemma \ref{treniS} (ii) to sequences $S(m,n)$ with more general end-points $m,n$ and plugging the generalized estimate into the proof of Proposition \ref{operatore}. 

A second improvement comes from particularizing Theorem \ref{treni} to the specific situation in which $(T_h)$ is the sequence of permutation automorphisms $(U_{\sigma_h})$. Indeed, since many of the $U_{\sigma_h}$'s permute only a subset of the variables $z_1,\dots,z_d$, one expects the basin of attraction to be the whole $\C^d$ under growth assumptions on $(m_h)$ which are weaker than (\ref{lunghi}). A weaker growth of $(m_h)$ allows the conjugacy equation to be solvable under a milder bunching condition.

Another way of reducing the growth of the sequence of $(m_h)$ in Theorem \ref{treni} is to modify the part of degree higher than 2 of the special triangular automorphisms $f_n$ in order to keep the degree of $f_{m_{h+1},m_h}$ lower than the stable degree $K$. Since in the application of Theorem \ref{volata} we only need a bounded conjugacy at the level of 2-jets, we are actually allowed to work with such modified maps in the proof of Theorem \ref{principale}.

We have carefully checked all these possibilities: They lead indeed to improvements of the sequence of exponents $\epsilon(d)$, which however remains infinitesimal for $d\rightarrow \infty$. On the other hand, such improvements also require considerable complications of the whole argument, which might hide the main ideas, and hence we have decided not to include them.

\providecommand{\bysame}{\leavevmode\hbox to3em{\hrulefill}\thinspace}
\providecommand{\MR}{\relax\ifhmode\unskip\space\fi MR }
\providecommand{\MRhref}[2]{%
  \href{http://www.ams.org/mathscinet-getitem?mr=#1}{#2}
}
\providecommand{\href}[2]{#2}

\end{document}